\documentclass[11pt]{amsart}

\addtolength{\textwidth}{2.4cm}
\addtolength{\textheight}{2cm}
\addtolength{\topmargin}{-2cm}
\topmargin -1cm
\addtolength{\evensidemargin}{-1.2cm}
\addtolength{\oddsidemargin}{-1.2cm}
\setlength{\parindent}{0cm}
\addtolength{\parskip}{0.15cm}

\usepackage{caption}
\DeclareCaptionLabelFormat{period}{Table \thetable.}
\captionsetup[table]{labelformat=period,skip=-15pt}
\captionsetup[figure]{skip=5pt}

\usepackage[matrix,arrow,curve]{xy}
\usepackage{amsmath}
\usepackage{amssymb}
\usepackage{amsthm}
\usepackage{stmaryrd}
\usepackage[matrix,arrow,curve]{xy}
\usepackage{bbm}
\usepackage{float}
\usepackage{graphicx}

\usepackage[colorlinks=true]{hyperref}

\renewcommand{\L}{\mathcal{L}}
\newcommand{\A}{\mathbb{A}}

\newcommand{\B}{\mathbb{B}}
\renewcommand{\O}{\mathcal{O}}
\newcommand{\F}{\mathcal{F}}

\renewcommand{\P}{\mathbb{P}}
\newcommand{\Z}{\mathbb{Z}}
\newcommand{\N}{\mathbb{N}}
\newcommand{\R}{\mathbb{R}}

\newcommand{\FM}{\operatorname{FM}}
\renewcommand{\dim}{\operatorname{dim}}
\newcommand{\Hom}{\operatorname{Hom}}
\newcommand{\Ext}{\operatorname{Ext}}
\newcommand{\RHom}{\operatorname{\textbf{R}Hom}}
\newcommand{\rk}{\operatorname{rk}}
\newcommand{\Proj}{\operatorname{Proj}}
\newcommand{\Coh}{\operatorname{Coh}}

\newcommand{\MCM}{\underline{\operatorname{MCM}}}

\newcommand{\cone}{\operatorname{cone}}

\newcommand{\rad}{\operatorname{rad}\langle -, - \rangle}

\newtheorem{Lem}{Lemma}[section]
\newtheorem{Prop}[Lem]{Proposition}
\newtheorem{Cor}[Lem]{Corollary}
\newtheorem{Thm}[Lem]{Theorem}

\newtheorem*{Def}{Definition}

\def\gr{\operatorname{gr}\!}

\usepackage{fancyhdr}
\pagestyle{fancy}
\fancyhead{} 
\fancyfoot{}
\fancyhead[RO]{\thepage}
\fancyhead[LO]{Betti numbers of MCM modules over the cone of an elliptic normal curve}
\fancyhead[LE]{\thepage}
\fancyhead[RE]{Alexander Pavlov}

\setlength{\headheight}{12pt}

\usepackage{etoolbox}
\patchcmd{\section}{\scshape}{\bfseries}{}{}
\makeatletter
\renewcommand{\@secnumfont}{\bfseries}
\makeatother

\usepackage{etoolbox}
\makeatletter
\patchcmd{\@settitle}{\uppercasenonmath\@title}{}{}{}
\patchcmd{\@setauthors}{\MakeUppercase}{}{}{}

\makeatletter
\newcommand\xleftrightarrow[2][]{%
  \ext@arrow 9999{\longleftrightarrowfill@}{#1}{#2}}
\newcommand\longleftrightarrowfill@{%
  \arrowfill@\leftarrow\relbar\rightarrow}
\makeatother

\title{Betti numbers of MCM modules over the cone of an elliptic normal curve}
\author{Alexander Pavlov}
\address{MSRI, 17 Gauss Way, Berkeley, CA, 94720, USA}
\email{apavlov@msri.org}
\date{}

 \subjclass[2010]{Primary: 
14H52, 
13C14, 
13D02 , 
Secondary:
14F05 , 
}

\keywords{Elliptic curves, Maximal Cohen-Macaulay modules, Ulrich modules, Koszul modules.}

\begin{document}
\begin{abstract}

We apply Orlov's equivalence to derive formulas for the Betti numbers of maximal Cohen-Macaulay modules over the cone an elliptic curve $(E,x)$ embedded into $\mathbb{P}^{n-1}$, by the full linear system $|\mathcal{O}(nx)|$, for $n>3$. The answers are given in terms of recursive sequences. These results are applied to give a criterion of (Co-)Koszulity. 

In the last two sections of the paper we apply our methods to study the cases $n=1,2$\,. Geometrically these correspond to the embedding of an elliptic curve into a weighted projective space. The singularities of the corresponding cones are called minimal elliptic. They were studied by K.Saito \cite{Saito74}, where he introduced the notation $\widetilde{E_8}$ for $n=1$, $\widetilde{E_7}$ for $n=2$ and $\widetilde{E_6}$ for the cone over a smooth cubic, that is, for the case $n=3$\,. For the singularities $\widetilde{E_7}$ and $\widetilde{E_8}$ we obtain formulas for the Betti numbers and the numerical invariants of MCM modules analogous to the case of a plane cubic.
\end{abstract}

\maketitle

\tableofcontents

\section{Introduction}

Let $k$ be an algebraically closed field of characteristic zero. All rings and varieties are defined over $k$. In \cite{Pavlov2015} we obtained formulas for the Betti numbers of maximal Cohen-Macaulay modules over the cone of a smooth plane cubic. In this paper we extend our results to normal elliptic curves in $\P^n$, $n>2$. For more details on geometry of elliptic normal curves see \cite{Hulek86}. For an elliptic curve in $\P^3$ its homogeneous coordinate ring $R$ is a complete intersection of two quadrics, but for $n>3$ the ring $R$ is no longer a complete intersection, but it is Gorenstein. In this case the structure of Betti tables is more involved, we don't give an explicit formulas for the Betti numbers, but derive a recurrence relations. Solving these recurrence relations one can obtain explicit formulas.

\begin{Thm}
Let $X$ be a smooth projective Calabi-Yau variety, $R$ its homogeneous coordinate ring. Then there is an equivalence of categories 
$$
\Phi : D^b(X) \to D^{\gr}_{Sg}(R).
$$
\end{Thm}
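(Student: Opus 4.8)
This is Orlov's theorem \cite{Orlov2009}; I describe the strategy. Write $X=\Proj R$ and let $\mathfrak m\subset R$ be the irrelevant ideal. The first point is that the hypotheses place us in the \emph{Gorenstein, parameter-zero} case of Orlov's construction. In every situation treated in this paper $X$ is arithmetically Gorenstein, so $R$ is a connected graded Gorenstein $k$-algebra of Krull dimension $\dim X+1$, generated in degree one; and the Calabi--Yau condition $\omega_X\cong\O_X$ forces the graded canonical module to be $\omega_R\cong R$, i.e. the Gorenstein parameter vanishes (equivalently $\mathbf R\Gamma_{\mathfrak m}(R)\cong R^{\vee}[\dim R]$ with no internal twist). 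For an elliptic normal curve this is immediate, since $\omega_R\cong\bigoplus_{d}\H^0(E,\omega_E(d))\cong\bigoplus_{d\ge0}\H^0(E,\O_E(d))=R$.

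Next I would realise both sides as Verdier quotients of $\mathcal D:=D^b(\gr R)$. By Serre's theorem $\Coh X\simeq\gr R/\tors R$, and, since $X$ is smooth, this upgrades to $D^b(X)\simeq\mathcal D/\langle\tors R\rangle$, where $\langle\tors R\rangle\subset\mathcal D$ is the thick subcategory of complexes with finite-length cohomology; on the other side $D^{\gr}_{Sg}(R)=\mathcal D/\Perf(R)$ is the definition, and by Buchweitz's theorem this equals the graded stable category $\MCM(R)$ of maximal Cohen--Macaulay modules --- the incarnation that makes the equivalence usable for the Betti-number computations in the rest of the paper. The task is thus to identify these two quotients.

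Orlov's method interpolates between the two quotients by means of the full triangulated subcategories $\mathcal D_{\ge i}\subset\mathcal D$ of complexes whose cohomology is generated in internal degrees $\ge i$. One produces two semiorthogonal decompositions of $\mathcal D$ in which $\mathcal D_{\ge i}$ appears together with, respectively, the residue-field twists $R/\mathfrak m(-p)$ ($p<i$) and the free twists $R(-p)$ ($p<i$); comparing them shows that the quotient functors $\mathcal D\to\mathcal D/\langle\tors R\rangle$ and $\mathcal D\to\mathcal D/\Perf(R)$ restrict on $\mathcal D_{\ge i}$ to functors whose failure to be equivalences is measured by the Gorenstein parameter. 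When the parameter is $0$, graded local duality interchanges the residue-field and free pieces, both restrictions become equivalences, and
$$
\Phi:=\Phi_i\;:\;D^b(X)\;\xrightarrow{\sim}\;\mathcal D_{\ge i}\;\xrightarrow{\sim}\;D^{\gr}_{Sg}(R)
$$
is the sought equivalence, canonical up to a natural autoequivalence of $D^{\gr}_{Sg}(R)$ (expressible through $\syz$, $\cosyz$ and the twist). On sheaves $\Phi$ is computed by derived truncated global sections, $\F\mapsto\bigoplus_{j\ge i}\mathbf R\Gamma(X,\F(j))$, followed by projection to $D^{\gr}_{Sg}(R)$; for nonzero parameter $a$ the same construction yields only a fully faithful functor in one direction, the remaining semiorthogonal summand being generated by $|a|$ twists of $\O_X$ or of the residue field according to the sign of $a$.

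I expect the main obstacle to be the semiorthogonal-decomposition bookkeeping. One must establish the relevant $\Hom$-vanishings (between $\mathcal D_{\ge i}$ and the residue-field twists, and between the free twists and $\mathcal D_{\ge i}$), construct the gluing triangles by truncating graded modules and minimal graded free resolutions, and --- the genuinely arithmetic input --- prove the duality between the free and residue-field pieces through graded local duality for the Gorenstein ring $R$; this is exactly the step where $R$ being Gorenstein with vanishing parameter is used. A subsidiary technical point is the derived Serre equivalence $D^b(X)\simeq\mathcal D/\langle\tors R\rangle$, which relies on smoothness of $X$ to lift bounded complexes of coherent sheaves to bounded complexes of finitely generated graded $R$-modules.
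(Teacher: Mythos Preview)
The paper does not prove this statement at all: it is quoted as a special case of Orlov's theorem \cite{Orlov09}, with the remark ``We want to stress that this is a special case of Orlov's much more general result, but we are going to apply only this special case.'' Your sketch is therefore not being compared against an argument in the paper, but against Orlov's original proof, and your outline of that proof --- Gorenstein parameter zero from $\omega_X\cong\O_X$, both categories realised as quotients of $D^b(\gr R)$, semiorthogonal decompositions indexed by the truncation subcategories $\mathcal D_{\ge i}$, and graded local duality to match the residue-field and free pieces --- is accurate and well organised.

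One small caveat: you assume $R$ is generated in degree one, which is correct for the elliptic normal curves that occupy most of the paper, but the later sections treat the singularities $\widetilde{E_7}$ and $\widetilde{E_8}$, whose coordinate rings are weighted (generators in degrees $1,1,2$ and $1,2,3$). Orlov's argument still applies in that generality, but if you want your sketch to cover all the cases used here you should drop ``generated in degree one'' and note that the semiorthogonal machinery works for any positively graded connected Gorenstein algebra.
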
 
We want to stress that this is a special case of Orlov's much more general result, but we are going to apply only this special case.

It was shown by R.-O. Buchweitz \cite{Buch87} that the singularity category $D^{\gr}_{Sg}(R)$ is equivalent to the stable category of maximal Cohen-Macaulay modules
$$
D^{\gr}_{Sg}(R) \cong \MCM_{\gr}(R).
$$
We will call the equivalence $\Phi : D^b(X) \to \MCM_{\gr}(R)$ Orlov's equivalence. 

We start with the simple observation that the graded Betti numbers are given by the well known formula
$$
\beta_{i,j}(M)=\dim \Ext^i(M, k(-j))\,.
$$
For an MCM module $M$ rewriting this formula in terms of extension groups $\Ext^i(M, k(-j))$ in the stable category and using the fact that Orlov's equivalence is an equivalence of triangulated categories and thus preserve extension groups we can prove the following result:
\begin{Thm}
The graded Betti numbers of an MCM module $M$ are given by
$$
\beta_{i,j}(M)=\dim \Hom_{D^b(X)}(\Phi^{-1}(M), \sigma^{-j}(\Phi^{-1}(k^{st}))[i]),
$$
where $\sigma$ is an endofunctor of $D^b(X)$ corresponding under Orlov's equivalence to the shift of grading by one $(1)$ of a graded module 
$$
\sigma = \Phi^{-1} \circ (1) \circ \Phi
$$ 
\end{Thm}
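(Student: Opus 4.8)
\emph{Proof strategy.} Throughout I would regard $M$ as an MCM module without free direct summand (equivalently, as an object of the stable category), as is tacit in the statement. The plan is to start from the quoted formula $\beta_{i,j}(M)=\dim_k\Ext^i_R(M,k(-j))$ and to rewrite the right-hand side as a morphism space first in $\MCM_{\gr}(R)\cong D^{\gr}_{Sg}(R)$ and then, through $\Phi$, in $D^b(X)$.

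The heart of the matter is this: since $R$ is Gorenstein and $M$ is MCM, for every $i>0$ and every finitely generated graded module $N$ there is a natural isomorphism
$$
\Ext^i_R(M,N)\;\cong\;\Hom_{\MCM_{\gr}(R)}\!\big(M,\,N[i]\big),
$$
where $[\,\cdot\,]$ is the suspension of $\MCM_{\gr}(R)\cong D^{\gr}_{Sg}(R)$, i.e. the cosyzygy functor. I would deduce this from Buchweitz's theorem together with the standard comparison between $\Ext$ and stable (Tate) cohomology: an MCM module has a complete resolution whose truncation in non-negative cohomological degrees is an ordinary projective resolution, which forces $\widehat{\Ext}^i_R(M,N)=\Ext^i_R(M,N)$ for $i>0$, while by construction $\widehat{\Ext}^i_R(M,N)=\Hom_{\MCM_{\gr}(R)}(M,N[i])$. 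Taking $N=k(-j)$ and writing $k^{st}$ for the object of $\MCM_{\gr}(R)$ represented by the residue field, this yields $\beta_{i,j}(M)=\dim\Hom_{\MCM_{\gr}(R)}(M,\,k^{st}(-j)[i])$ for $i>0$. The case $i=0$ merely records the graded minimal generators of $M$; I would treat it separately by a standard cosyzygy argument, e.g. via $\beta_{0,j}(M)=\beta_{1,j}(\cosyz M)$ and the already-proved case $i=1$ applied to the MCM module $\cosyz M$.

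It then suffices to transport along Orlov's equivalence. As $\Phi^{-1}\colon\MCM_{\gr}(R)\to D^b(X)$ is an equivalence of triangulated categories it is fully faithful and commutes with the suspension, so
$$
\Hom_{\MCM_{\gr}(R)}\!\big(M,\,k^{st}(-j)[i]\big)\;\cong\;\Hom_{D^b(X)}\!\big(\Phi^{-1}(M),\,\Phi^{-1}(k^{st}(-j))[i]\big).
$$
Finally, by the very definition $\sigma=\Phi^{-1}\circ(1)\circ\Phi$ one has $\sigma^{-j}=\Phi^{-1}\circ(-j)\circ\Phi$, whence $\Phi^{-1}(k^{st}(-j))=\sigma^{-j}\big(\Phi^{-1}(k^{st})\big)$; substituting produces the claimed identity.

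I expect the only non-formal point to be the displayed isomorphism $\Ext^i_R(M,N)\cong\Hom_{\MCM_{\gr}(R)}(M,N[i])$ and the low-degree bookkeeping around it: one must be sure that passing to $D^{\gr}_{Sg}(R)$ neither loses nor creates homomorphisms in the degrees that feed the resolution, which rests on $M$ being maximal Cohen--Macaulay (so already ``stable'') and on Buchweitz's identification of $D^{\gr}_{Sg}(R)$ with $\MCM_{\gr}(R)$. Everything past that is formal manipulation with the triangulated equivalence $\Phi$ and the twist functor $\sigma$.
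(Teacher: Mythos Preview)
Your proposal is correct and follows essentially the same route the paper sketches: start from $\beta_{i,j}(M)=\dim\Ext^i(M,k(-j))$, identify this Ext with a morphism space in $\MCM_{\gr}(R)\cong D^{\gr}_{Sg}(R)$ via Buchweitz, and then transport through the triangulated equivalence $\Phi$ using the definition of $\sigma$. The paper states this outline in the paragraph preceding the theorem and defers details to \cite{Pavlov2015}; your version spells out the one substantive point---that Tate and ordinary Ext agree in positive degrees for an MCM first argument---and handles $i=0$ by a cosyzygy shift, which is exactly the kind of bookkeeping the paper suppresses.
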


In \cite{Pavlov2015} we proved 

\begin{Thm}
There is an isomorphism
$$
\Phi^{-1}(k^{st}) \cong \O[1]
$$
in the category $D^b(E)$\,. The graded Betti numbers $\beta_{i,j}$ of the MCM module $M=\Phi(\F)$ are given by the formula
$$
\beta_{i,j}(M)=\dim \Hom_{D^b(E)}(\F, \sigma^{-j}(\O[1])[i])\,.
$$
\end{Thm}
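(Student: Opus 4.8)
The plan is to obtain the Betti number formula as a formal consequence of the previous theorem, so that the real content is the isomorphism $\Phi^{-1}(k^{st})\cong\O[1]$. Indeed, granting that isomorphism, for $M=\Phi(\F)$ one has $\Phi^{-1}(M)=\F$, and substituting $\Phi^{-1}(k^{st})=\O[1]$ into
$$
\beta_{i,j}(M)=\dim\Hom_{D^b(E)}\bigl(\Phi^{-1}(M),\,\sigma^{-j}(\Phi^{-1}(k^{st}))[i]\bigr)
$$
yields the asserted formula at once.

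To identify $\Phi^{-1}(k^{st})$ I would work in $D^{\gr}_{Sg}(R)\cong\MCM_{\gr}(R)$, where the free module $R$ is the zero object. Write $\mathfrak{m}=R_{\geq 1}$ for the irrelevant maximal ideal. The tautological short exact sequence $0\to\mathfrak{m}\to R\to k\to 0$ is the projective cover of $k$, so $\mathfrak{m}=\syz_R(k)$, and the associated triangle $\mathfrak{m}\to R\to k\to\mathfrak{m}[1]$ in $D^{\gr}_{Sg}(R)$ collapses (as $R\cong 0$ there) to an isomorphism $k^{st}\cong\mathfrak{m}[1]$. (Since $R$ is Gorenstein of Krull dimension $2$, one more syzygy step lands in $\MCM_{\gr}(R)$ and recovers $k^{st}$ as the appropriate cohomological shift of $\syz^2_R(k)$.) Hence it suffices to prove $\Phi(\O)\cong\mathfrak{m}$ in $D^{\gr}_{Sg}(R)$, equivalently $\Phi^{-1}(\mathfrak{m})\cong\O$ in $D^b(E)$.

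For this last step I would invoke the explicit form of Orlov's functor $\Phi$ in the Calabi--Yau case: it is realized by lifting sheaves to finitely generated graded $R$-modules through truncated modules of global sections, sending a vector bundle $\G$ on $E$ to the class in $D^{\gr}_{Sg}(R)$ of $\bigoplus_{j\geq 1}\H^0(E,\G(j))$ with its natural grading. Evaluating at $\G=\O$ and using projective normality of $E$ in its embedding, so that $\H^0(E,\O(j))=R_j$ for all $j\geq 0$, this module is exactly $R_{\geq 1}=\mathfrak{m}$. Combined with the previous step, $\Phi^{-1}(k^{st})\cong\mathfrak{m}[1]\cong\O[1]$. (Sheaf-theoretically the same point reads: $\mathfrak{m}$ and $R$ agree in every positive degree, so $\widetilde{\mathfrak{m}}=\widetilde{R}=\O$, and one checks that $\mathfrak{m}$ represents its class in the subcategory on which the inverse equivalence is computed by sheafification.)

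The main obstacle, and essentially the only nonformal input, is pinning down the normalization of Orlov's functor so that the truncation occurs in degrees $\geq 1$: the degree-$\geq 0$ truncation would return the free module $R$ and the impossible conclusion $\Phi(\O)=0$, so one must verify that the degree-$\geq 1$ version is the one built into the definition of $\Phi$, i.e. track precisely how $\Phi$ intertwines the grading shift $(1)$ with the syzygy shift. Everything else, namely the triangle computation and the substitution into the Betti formula, is routine. For $n=3$ one can sidestep the general construction: $R$ is then a hypersurface, $\MCM_{\gr}(R)$ is the category of graded matrix factorizations of the defining cubic, and $\Phi(\O)$ can be written down explicitly and recognized as $\mathfrak{m}$.
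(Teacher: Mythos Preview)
The paper does not prove this theorem here; it is quoted from the author's earlier work \cite{Pavlov2015} (note the sentence ``In \cite{Pavlov2015} we proved'' immediately preceding the statement), so there is no in-paper proof to compare against. Your argument is the standard one and is correct: the Betti formula is a formal substitution into the previous theorem once $\Phi^{-1}(k^{st})\cong\O[1]$ is known; the triangle $\mathfrak{m}\to R\to k\to\mathfrak{m}[1]$ collapses in $D^{\gr}_{Sg}(R)$ to $k\cong\mathfrak{m}[1]$; and Orlov's explicit description of $\Phi$ in the Calabi--Yau case identifies $\Phi(\O)$ with the truncated section module $\bigoplus_{j\geq 1}H^0(E,\O(j))=R_{\geq 1}=\mathfrak{m}$. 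This is essentially how the result is established in \cite{Pavlov2015}.

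One small correction to your discussion of the normalization obstacle: the degree-$\geq 0$ truncation would \emph{not} return the free module $R$ and hence $0$, because Orlov's functor uses \emph{derived} global sections and $H^1(E,\O)\cong k\neq 0$. The resulting object is a two-term complex with $H^0=R$ and $H^1=k$ (the latter sitting in internal degree $0$), and in the singularity category this is again isomorphic to $k[-1]\cong\mathfrak{m}$. So both natural truncation cutoffs give the same answer for $\O$, and the issue you flag, while a legitimate bookkeeping point in general, is less delicate here than you suggest.
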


For more detailed preliminaries see \cite{Pavlov2015} and references therein.

The Betti numbers of an indecomposable MCM module $M=\Phi(\F)$ 
$$
\beta_{i,j}(\Phi(\mathcal{F}))=\dim \Hom_{D^b(E)}(\F, V_{-j}[i]),
$$
are determined by the objects $V_j=\sigma^j(\O[1])$. Now we need to know all iterations of the functor $\sigma$. Let us rewrite these objects as
\begin{itemize}
\item $V_j=K_j[j]$, for $j>0$,
\item $V_j=K_j[j+1]$, for $j \leq 0$,
\end{itemize}
For instance, $V_0 \cong \O[1]$ and $V_1 \cong \O(1)[1]$. Then we have 

\begin{Thm}
The object $K_j$ is a vector bundle. The charges $Z(K_j)=\left(
       \begin{matrix}
         r_j \\
         d_j
       \end{matrix}
     \right)$ 
satisfy the recursion formulas
\vspace{-0.2cm}
\begin{gather*}
r_{j+1}=(n-2)r_j-r_{j-1}, \\
d_{j+1}=(n-2)d_j-d_{j-1},
\end{gather*}
and the sequences start with $r_1=1$, $r_2=n-1$, $d_1=n$, and $d_2=n^2-2n$\,.
For $j \leq 0$ we have the same recursion formulas:
\vspace{-0.2cm}
\begin{gather*}
r_{j-1}=(n-2)r_j-r_{j+1}, \\
d_{j-1}=(n-2)d_j-d_{j+1},
\end{gather*}
with initial conditions $r_0=1$, $r_{-1}=n-1$, $d_0=0$, and $d_{-1}=n$\,.
In particular if $j >0$, $\deg(K_j)=\deg(K_{-j})$, and $\rk(K_{j+1})=\rk(K_{-j})$\,.
\end{Thm}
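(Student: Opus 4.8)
The plan is to separate the assertion into a purely numerical part --- the two recursions, the four seed values and the two symmetries --- and a geometric part, namely that each $V_j$ is, up to the indicated shift, a locally free sheaf concentrated in a single cohomological degree (this is the content of ``$K_j$ is a vector bundle''). The first step for both is to write $\sigma$ as a composite of concrete geometric functors: as in the case of a smooth plane cubic treated in \cite{Pavlov2015}, one has $\sigma\cong(-\otimes\O(1))\circ T_{\O}$, where $\O(1)=\O_E(nx)$ and $T_{\O}$ is the spherical twist along the spherical object $\O\in D^b(E)$ ($E$ being Calabi--Yau of dimension one, $\RHom(\O,\O)\cong k\oplus k[-1]$). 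On the numerical Grothendieck group $\Z^2=\langle\rk,\deg\rangle$, the twist acts by $[T_{\O}\F]=[\F]-\chi(\F)[\O]$, which on an elliptic curve ($\chi=\deg$) is $(\rho,\delta)\mapsto(\rho-\delta,\delta)$; composing with $-\otimes\O(1)$, i.e.\ $(\rho,\delta)\mapsto(\rho,\delta+n\rho)$, gives $[\sigma\F]=A[\F]$ with $A=\bigl(\begin{smallmatrix}1&-1\\ n&-(n-1)\end{smallmatrix}\bigr)$, so $\det A=1$, $\operatorname{tr}A=2-n$ and $A^2+(n-2)A+I=0$; hence $-A$ (and $-A^{-1}$) has characteristic polynomial $\lambda^2-(n-2)\lambda+1$.

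For $j\ge1$ I would prove by induction the strengthened claim that $V_j\cong K_j[j]$ with $K_j$ a semistable vector bundle of slope $\mu(K_j)>2$; the base case is $V_1\cong\O(1)[1]$, so $K_1=\O(1)$, of degree $n>2$. Assuming the claim for $j$: a semistable bundle of slope $>2=2g$ on $E$ is globally generated with $\H^1=0$, so $\RHom(\O,V_j)\cong k^{\,d_j}[j]$ with $d_j=\deg K_j=\dim\H^0(K_j)$, and the spherical twist triangle $\RHom(\O,V_j)\otimes\O\to V_j\to T_{\O}(V_j)$ becomes $\O^{\,d_j}[j]\xrightarrow{\,\mathrm{ev}\,}K_j[j]\to T_{\O}(V_j)$, where $\mathrm{ev}\colon\O^{\,d_j}\to K_j$ is surjective with kernel a subsheaf of a free sheaf, hence a vector bundle $M_{K_j}$. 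Thus $T_{\O}(V_j)\cong M_{K_j}[j+1]$ and $V_{j+1}=\sigma(V_j)\cong\bigl(M_{K_j}\otimes\O(1)\bigr)[j+1]$, so $V_{j+1}\cong K_{j+1}[j+1]$ with $K_{j+1}:=M_{K_j}\otimes\O(1)$ a vector bundle. By Butler's theorem on kernel bundles of semistable bundles of slope $\ge 2g$ (or directly from Atiyah's classification of bundles on $E$), $M_{K_j}$ is semistable, hence so is its line-bundle twist $K_{j+1}$, and $\mu(K_{j+1})=\mu(M_{K_j})+n=\tfrac{-\mu(K_j)}{\mu(K_j)-1}+n>n-2\ge2$; the induction closes. (One can further check $\gcd(r_j,d_j)=\gcd(r_1,d_1)=1$, so each $K_j$ is in fact stable, but this is not needed.)

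For $j\le0$ I would transport this by the contravariant involution $\mathbb D:=(-)^{\vee}\otimes\O(1)$, $(-)^{\vee}$ the derived dual on $D^b(E)$. Using $(-)^{\vee}\circ T_{\O}\cong T_{\O}^{-1}\circ(-)^{\vee}$ together with the $\O(\pm1)$-bookkeeping, $\mathbb D\circ\sigma\circ\mathbb D\cong\sigma^{-1}$; since $\mathbb D(\O[1])=\O(1)[-1]=V_1[-2]$, this gives $V_{-j}=\mathbb D\bigl(\sigma^{j}(V_1[-2])\bigr)=\bigl(V_{j+1}^{\vee}\otimes\O(1)\bigr)[2]$, so for $j\ge1$, using $V_{j+1}\cong K_{j+1}[j+1]$, one gets $V_{-j}\cong\bigl(K_{j+1}^{\vee}\otimes\O(1)\bigr)[1-j]$, i.e.\ $K_{-j}\cong K_{j+1}^{\vee}\otimes\O(1)$ --- again a vector bundle --- with $K_0=\O$ and $K_{-1}=K_2^{\vee}\otimes\O(1)\cong M_{\O(1)}^{\vee}$. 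The numerics are now formal: since $V_j=K_j[j]$ for $j>0$, $[V_{j+1}]=A[V_j]$ becomes $[K_{j+1}]=-A[K_j]$, and as $-A$ satisfies $\lambda^2-(n-2)\lambda+1$, Cayley--Hamilton gives $[K_{j+1}]=(n-2)[K_j]-[K_{j-1}]$ for $j\ge2$; with $[K_1]=(1,n)$ and $[K_2]=[M_{\O(1)}\otimes\O(1)]=(n-1,n^2-2n)$ computed directly, this is the forward recursion with its seeds, and the same argument applied to $\sigma^{-1}$ gives the backward recursion from $[K_0]=(1,0)$, $[K_{-1}]=(n-1,n)$. The two symmetries drop out of $K_{-j}\cong K_{j+1}^{\vee}\otimes\O(1)$: this gives $\rk K_{-j}=\rk K_{j+1}$, and since $K_{j+1}=M_{K_j}\otimes\O(1)$ with $\deg M_{K_j}=-\deg K_j$, also $\deg K_{-j}=n\rk M_{K_j}-\deg(M_{K_j}\otimes\O(1))=-\deg M_{K_j}=\deg K_j$ (alternatively both follow from the recursions and seeds by uniqueness).

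The step I expect to be the main obstacle is the geometric part --- showing $V_j$ is concentrated in one cohomological degree and locally free there; once that is established the numerical statements are immediate from the matrix $A$. The delicate point in the induction is that the spherical twist triangle collapses to the short exact sequence of a kernel bundle only because $K_j$ is globally generated, and Butler's semistability statement for that kernel requires $\mu(K_j)>2g$; keeping $\mu(K_j)>2$ at every stage is exactly what forces one to carry ``semistable of slope $>2$'' (rather than merely ``vector bundle'') through the induction, and it is also the place where the hypothesis $n>3$ is used, via $\mu(K_{j+1})>n-2\ge2$.
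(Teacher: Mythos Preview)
Your proof is correct and shares the same skeleton as the paper's: both run an induction on $j>0$ in which the spherical-twist triangle for $T_{\O}$ collapses to the short exact sequence $0\to M_{K_j}\to H^0(K_j)\otimes\O\to K_j\to 0$, giving $K_{j+1}=M_{K_j}\otimes\O(1)$, and both read the recursion off from the action of $-[\sigma]$ on charges. The execution differs in two places. First, to keep the induction running the paper tracks only $\deg K_j$, invoking the auxiliary recursion Lemma~\ref{seq} to show $d_j$ stays positive and increasing, and cites \cite[Lemma~8]{Atiyah57} for surjectivity of the evaluation map; you instead carry the stronger hypothesis ``semistable of slope $>2$'' and close it via Butler's theorem together with the explicit slope estimate $\mu(K_{j+1})=n-\mu(K_j)/(\mu(K_j)-1)>n-2\ge 2$, which makes transparent exactly where $n>3$ enters. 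Second, for $j\le 0$ the paper is terse (``completely analogous, but we use a dual sequence''), whereas your use of the contravariant involution $\mathbb D=(-)^\vee\otimes\O(1)$, satisfying $\mathbb D\circ\sigma\cong\sigma^{-1}\circ\mathbb D$, produces the explicit isomorphism $K_{-j}\cong K_{j+1}^\vee\otimes\O(1)$; from this both the vector-bundle property for $j\le 0$ and the two symmetries $\rk K_{-j}=\rk K_{j+1}$, $\deg K_{-j}=\deg K_j$ drop out without a second induction. This is a cleaner route to the symmetries than the paper's, which infers them by comparing the two separately-seeded recursions.
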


To find all possible Betti table it is enough to consider only vector bundles, as the following theorem states.

\begin{Thm}
Every indecomposable MCM module over $R_E$ corresponds under Orlov's equivalence $\Phi : D^b(E) \to \MCM_{\gr}(R)$ after some shift $(k)$  to an indecomposable vector bundle, possibly translated by some cohomological degree.
\end{Thm}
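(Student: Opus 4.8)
The plan is to combine the structure of $D^b(E)$ for $E$ an elliptic curve with the description of $\sigma$ furnished by the previous theorem. The two structural inputs are: every indecomposable object of $D^b(E)$ is a shift of an indecomposable coherent sheaf, and an indecomposable coherent sheaf on $E$ is either a torsion sheaf supported at one point or an indecomposable vector bundle. Granting these, the content of the theorem is that a single grading twist $(1)$ converts the (shifted) torsion sheaves into (shifted) vector bundles, and this is what I would verify on the numerical Grothendieck group.

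For the first input I would use that $\Coh(E)$ is hereditary, so every $\F\in D^b(E)$ splits as $\bigoplus_i\H^i(\F)[-i]$; for the second, that the torsion subsheaf $T(\mathcal G)\subseteq\mathcal G$ is a direct summand because $\mathcal G/T(\mathcal G)$ is locally free and $\Ext^1(\mathcal G/T(\mathcal G),T(\mathcal G))\cong\H^1(E,(\mathcal G/T(\mathcal G))^\vee\otimes T(\mathcal G))=0$, a torsion sheaf on a curve having no higher cohomology. Now let $M$ be an indecomposable non-free MCM module; since $\Phi$ is a triangulated equivalence, $\F:=\Phi^{-1}(M)$ is indecomposable, hence $\F\cong\mathcal G[m]$ with $\mathcal G$ an indecomposable sheaf. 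If $\mathcal G$ is a vector bundle there is nothing to prove (take $k=0$); otherwise $\mathcal G$ is torsion of some length $\ell\ge1$, with $(\rk,\deg)$-class $(0,\ell)$, and I must show that $\sigma(\F)\cong\Phi^{-1}(M(1))$ is a shifted vector bundle — equivalently, by the two facts above, that it has nonzero rank.

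An exact autoequivalence of $D^b(E)$ acts on $K_0^{\operatorname{num}}(E)\cong\Z^2$ with coordinates $(\rk,\deg)$ through an element of $GL_2(\Z)$; I would compute this matrix for $\sigma$ from the preceding theorem, using $\sigma(\O[1])\cong\O(1)[1]$ and $\sigma(V_{-1})\cong V_0$ together with the recorded charges $Z(V_{-1})=(n-1,n)$ and $Z(V_0)=(-1,0)$. The outcome is $\sigma=\left(\begin{smallmatrix}1&-1\\ n&1-n\end{smallmatrix}\right)$ on $(\rk,\deg)$, of determinant $1$, and in particular the rank-zero line $\Z\cdot(0,1)$ is not $\sigma$-invariant. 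Therefore the class of $\sigma(\mathcal G[m])$ has rank component $(-1)^{m+1}\ell\neq0$, so $\sigma(\F)$ is an indecomposable object of $D^b(E)$ of nonzero rank, hence of the form $\mathcal E[m']$ with $\mathcal E$ an indecomposable vector bundle; equivalently $M(1)=\Phi(\mathcal E[m'])$, which proves the theorem with $k\in\{0,1\}$.

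The only step that is not bookkeeping is the claim that $\sigma$ moves the rank-zero line. For $n>2$ this needs no matrix at all: were that line $\sigma$-invariant, $\sigma$ would rescale ranks by a fixed sign, contradicting $V_2=\sigma(V_1)$ with $\rk V_1=-1$ and $\rk V_2=r_2=n-1\neq\pm1$; the explicit matrix above then covers the remaining small values of $n$ uniformly. So I expect the only real work to be extracting $\sigma$'s action on $K_0^{\operatorname{num}}(E)$ cleanly from the earlier recursion; everything else — $\Phi$ and $\sigma$ triangulated, hence preserving indecomposability and commuting with cohomological shift, and $\Phi^{-1}(M(k))\cong\sigma^k(\Phi^{-1}(M))$ — is formal.
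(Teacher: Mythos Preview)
Your proposal is correct and follows essentially the same route as the paper's proof, just with considerably more detail filled in. The paper's proof is a two-line sketch---reduce via the classification of indecomposables in $D^b(E)$ to showing that the $\sigma$-orbit of any object meets the nonzero-rank locus, then appeal to the explicit recursion for ranks---and you have unpacked exactly that argument, including the matrix $c_n=\left(\begin{smallmatrix}1&-1\\ n&1-n\end{smallmatrix}\right)$ for $[\sigma]$ (which the paper records in the proof of the preceding proposition) and the observation that one application of $\sigma$ already suffices, so that $k\in\{0,1\}$.
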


Finally the main formula in this case gives us

\begin{Cor}
Let $\F[l] \in D^b(E)$ be an indecomposable object, where $\F$ is a vector bundle, then the Betti numbers of $\Phi(\F)$ are given by the following formulae
\begin{enumerate}
  \item If $j \geq 0$, then $\beta_{i,j}=\begin{cases}
\dim H^0(E, \F^\vee \otimes K_{-j}), &\text{if $i-j=l-1$,} \\
\dim H^1(E, \F^\vee \otimes K_{-j}), &\text{if $i-j=l$.} \\
\end{cases}$
  \item If $j<0$, then $\beta_{i,j}=\begin{cases}
\dim H^0(E, \F^\vee \otimes K_{-j}), &\text{if $i-j=l$,} \\
\dim H^1(E, \F^\vee \otimes K_{-j}), &\text{if $i-j=l+1$.} \\
\end{cases}$
\end{enumerate}
In particular, if the charge of $\F$ is fixed, there are only finitely many possibilities for the Betti tables.
\end{Cor}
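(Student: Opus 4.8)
The plan is to start from Theorem 1.3, which already expresses the Betti numbers as
$\beta_{i,j}(\Phi(\F)) = \dim \Hom_{D^b(E)}(\F, \sigma^{-j}(\O[1])[i])$, and to unwind the right-hand side using the explicit description of $V_{-j} = \sigma^{-j}(\O[1])$ in terms of the vector bundles $K_{-j}$. For $j \geq 0$ we have $-j \leq 0$, so $V_{-j} = K_{-j}[-j+1]$; for $j < 0$ we have $-j > 0$, so $V_{-j} = K_{-j}[-j]$. Substituting, $\beta_{i,j}(\Phi(\F[l]))$ becomes $\dim \Hom_{D^b(E)}(\F[l], K_{-j}[-j+1+i])$ in the first case and $\dim \Hom_{D^b(E)}(\F[l], K_{-j}[-j+i])$ in the second. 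Shifting $\F$ out of the first slot turns these into $\dim \Hom_{D^b(E)}(\F, K_{-j}[i-j+1-l])$ and $\dim \Hom_{D^b(E)}(\F, K_{-j}[i-j-l])$ respectively.

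Next I would invoke that $E$ is a smooth projective curve, so $D^b(E)$ has cohomological dimension one: $\Hom_{D^b(E)}(\F, \G[m])$ vanishes unless $m \in \{0,1\}$, and for a vector bundle $\F$ and any coherent sheaf $\G$ one has $\Hom_{D^b(E)}(\F, \G[m]) = \Ext^m_E(\F,\G) = H^m(E, \F^\vee \otimes \G)$, using that $\F$ is locally free to pass from $\Ext$ to the cohomology of the dual-tensor sheaf. Since $K_{-j}$ is a vector bundle by Theorem 1.4, $\F^\vee \otimes K_{-j}$ is again a vector bundle, and its cohomology lives only in degrees $0$ and $1$. Applying this with $m = i-j+1-l$ in case $j \geq 0$ gives exactly the stated dichotomy: $\beta_{i,j} = \dim H^0$ when $i-j+1-l = 0$, i.e.\ $i-j = l-1$, and $\beta_{i,j} = \dim H^1$ when $i-j+1-l = 1$, i.e.\ $i-j = l$; for all other values of $i-j$ the Hom-group is zero. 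The case $j<0$ is identical with $m = i-j-l$, yielding $i-j=l$ for $H^0$ and $i-j = l+1$ for $H^1$.

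For the final sentence, fix the charge $(r,d)$ of $\F$. The recursion in Theorem 1.4 shows that, away from the degenerate values $n=3,4$ (which are excluded or handled separately — here $n>3$, and for $n=4$ the characteristic polynomial has a double root so the sequences grow linearly rather than being bounded), the ranks $r_j$ and degrees $d_j$ of $K_j$ grow without bound as $|j| \to \infty$: the recurrence $x_{j+1} = (n-2)x_j - x_{j-1}$ has characteristic roots $\lambda, \lambda^{-1}$ with $\lambda = \tfrac{(n-2)+\sqrt{(n-2)^2-4}}{2} > 1$ for $n>4$. Then $\chi(E, \F^\vee \otimes K_{-j}) = -\deg(\F^\vee \otimes K_{-j}) = r_{-j}\,d - r\,d_{-j}$ together with Clifford-type bounds (or simply $h^0, h^1 \geq 0$ and $h^0 - h^1 = \chi$) forces $h^0$ or $h^1$ to vanish once $|j|$ is large enough, so only finitely many columns of the Betti table are nonzero; each such column has at most two nonzero entries by the dichotomy above, each bounded in terms of $(r,d)$ and the finitely many relevant $(r_{-j}, d_{-j})$. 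Hence finitely many Betti tables.

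I expect the only real subtlety to be the very last claim — pinning down that the Betti table has finite support. The clean way is: by Serre duality on the elliptic curve $E$, $H^1(E, \F^\vee \otimes K_{-j}) \cong H^0(E, \F \otimes K_{-j}^\vee)^\vee$, and for a vector bundle of sufficiently negative (resp.\ positive) degree the relevant $H^0$ vanishes; combined with the growth of $\deg K_{-j}$ this kills all but finitely many $j$. One must also note that the bound on $i$ for each fixed $j$ is automatic from the dichotomy ($i$ is determined up to the two allowed values once $j$ and $l$ are fixed), so finiteness of the set of nonzero $\beta_{i,j}$ reduces to finiteness in $j$ alone. This is straightforward but deserves an explicit line in the write-up.
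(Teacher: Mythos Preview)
Your derivation of the two formulae is correct and is exactly what the paper has in mind: its proof is the single line ``Immediately follows from previous results,'' meaning precisely the substitution $V_{-j}=K_{-j}[-j+1]$ (for $j\ge 0$) or $V_{-j}=K_{-j}[-j]$ (for $j<0$) into the Hom-formula for $\beta_{i,j}$, followed by the identification $\Hom_{D^b(E)}(\F,K_{-j}[m])\cong H^m(E,\F^\vee\otimes K_{-j})$ and the observation that this vanishes unless $m\in\{0,1\}$.

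Your treatment of the ``in particular'' clause, however, misreads the statement. ``Finitely many possibilities for the Betti tables'' does \emph{not} mean that the Betti table has finite support. An indecomposable MCM module over $R_E$ has an infinite minimal free resolution, so $\beta_{i,j}\neq 0$ for infinitely many $(i,j)$; in your own notation, once $|j|$ is large the degree $s_{-j}=\deg(\F^\vee\otimes K_{-j})$ is large and nonzero, and then exactly one of $h^0,h^1$ vanishes while the other equals $|s_{-j}|\neq 0$. Thus your step ``forces $h^0$ or $h^1$ to vanish, so only finitely many columns of the Betti table are nonzero'' is a non sequitur. What is being asserted is that, for fixed charge $(r,d)$, there are only finitely many distinct Betti tables as $\F$ ranges over all indecomposable bundles with that charge. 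The reason is that on an elliptic curve the cohomology dimensions of the (semistable) bundle $\F^\vee\otimes K_{-j}$ are determined by its rank and degree alone unless the degree is zero; since $s_{-j}=0$ has at most finitely many integer solutions (immediate from the closed form of the linear recurrence, for $n=4$ as well as $n>4$), only finitely many entries of the table can depend on $\F$ beyond its charge, each in at most two ways. (Incidentally, on an elliptic curve Riemann--Roch reads $\chi=\deg$, not $\chi=-\deg$.)
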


All potentially nonzero Betti numbers of a given module $M = \Phi(\F)$
lie on the line $D_0=\{(i,j)\in \Z^2 | j=i\}$, the half-line $U_0=\{(i,j) \in \Z^2 | j=i+1, i \geq -1\}$, and the half-line $B_0=\{(i,j) \in \Z^2 |j=i-1, i \leq 0 \}$\,.

\begin{figure}[H]
\centering
\includegraphics{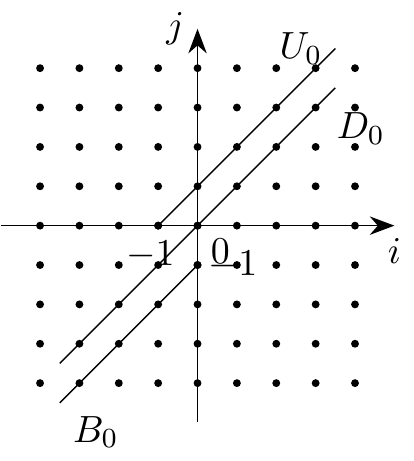}
\captionsetup{labelsep=period}
\caption{Lines $U_0$, $D_0$, $B_0$.}
\end{figure}

As an application of our results we derive a Koszulity criterion for MCM modules over $R$ in terms of vector bundles on the geometric side of Orlov's equivalence. Recall that a module $M$ is called Koszul if $\beta_{i,j}(M)=0$ for $i \neq j$ for $i \geq 0$

\begin{Thm}
An indecomposable MCM module $M=\Phi(\F[l])$ is Koszul if and only if $l=0$ and $\F$ is an indecomposable vector bundle with charge
$Z(\F)=\left(
       \begin{matrix}
         r \\
         d
       \end{matrix}
     \right),$
satisfying the condition $\F \ncong F_r(1)$ and $(r, d)$ satisfying the system of linear inequalities
\begin{align*}
    \left\{
    \begin{aligned}
    r > 0, \\
    d > 0, \\
    d- \mu(rn-d(n-1)) \geq 0,\\
    nr-d \geq 0,
    \end{aligned}
    \right.
\end{align*}
where $\mu=\frac{n-2+\sqrt{n^2-4n}}{2}$\,.
\end{Thm}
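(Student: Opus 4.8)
The plan is to route the whole computation through the Betti-number Corollary, turning Koszulity into the vanishing of a family of cohomology groups on $E$, and then read the numerical conditions off Atiyah's classification of bundles on an elliptic curve. By the structure theorem quoted above, every indecomposable MCM module is $\Phi(\F[l])$ with $\F$ an indecomposable vector bundle, so the problem becomes: for which pairs $(\F,l)$ do all Betti numbers $\beta_{i,j}$ with $i\geq 0$ and $i\neq j$ vanish?

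First I would show $l=0$ is forced. By the Corollary the potentially nonzero Betti numbers lie on the three lines $D_0,U_0,B_0$, and the cohomological shift $[l]$ slides the $H^0$- and $H^1$-strata relative to the diagonal $D_0$. If $|l|\geq 2$ then nothing can land on $D_0$, while the table is genuinely nonzero: $\chi(E,\F^\vee\otimes K_j)=\deg(\F^\vee\otimes K_j)=r\,d_j-d\,r_j$ is an affine function of $j$, positive for $j\ll 0$ and negative for $j\gg 0$, because the solutions of $x_{j+1}=(n-2)x_j-x_{j-1}$ grow like $\mu^{\pm j}$ with different slopes at the two ends; so there are infinitely many nonzero $\beta_{i,j}$, none on $D_0$, contradicting Koszulity. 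For $l=\pm 1$ one checks directly that some off-diagonal slot is forced nonzero, using that an indecomposable bundle on $E$ is semistable (Atiyah) and hence has a nonzero section exactly when its slope is positive, or zero and it is the Atiyah bundle. Hence $l=0$.

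With $l=0$ the Corollary identifies Koszulity with the vanishing of the $H^0$-terms appearing along $U_0$, that is $H^0(E,\F^\vee\otimes K_m)=0$ for the relevant $m<0$, together with the single $H^1$-term from $B_0$, namely $H^1(E,\F^\vee\otimes K_1)=0$; the entries on $D_0$ are then unconstrained. The second is disposed of at once: $K_1=\O(1)$, so Serre duality on $E$ gives $H^1(\F^\vee(1))\cong H^0(\F(-1))^{\vee}$, and since $\F$ is semistable this vanishes precisely when $\deg\F(-1)<0$, or $\deg\F(-1)=0$ and $\F(-1)$ is not the Atiyah bundle $F_r$ --- which is exactly the inequality $nr-d\geq 0$ together with the exceptional clause $\F\ncong F_r(1)$.

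The substance, and the step I expect to be the main obstacle, is the $U_0$ condition. The idea is to combine the recursion of the earlier theorem --- whose characteristic polynomial $t^2-(n-2)t+1$ has $\mu$ as its dominant root --- with exact triangles relating $K_{j-1},K_j,K_{j+1}$, compatible with $x_{j+1}=(n-2)x_j-x_{j-1}$, in order to collapse the infinite family of vanishings to finitely many. Since $\F$ is semistable, each surviving $\Hom(\F,K_m)=H^0(\F^\vee\otimes K_m)$ vanishes exactly when the slope of $\F$ dominates the controlling slope dictated by the recursion, and tracking the $\mu$-eigendirection of the charge recursion produces the remaining inequality $d-\mu\bigl(rn-d(n-1)\bigr)\geq 0$; the conditions $d>0$ (a bundle of nonpositive slope would make every $\F^\vee\otimes K_m$ with $m<0$ of positive degree, hence with a section) and $r>0$ (a vector bundle) come for free. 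The converse is this computation reversed: if $l=0$, the four inequalities hold, and $\F\ncong F_r(1)$, then all the displayed cohomology vanishes, so $M$ is Koszul. The delicate points are constructing the triangles among the $K_j$ --- equivalently, controlling the Harder--Narasimhan filtrations, not merely the average slopes, of these bundles --- isolating exactly which $m$, and with which multiplicities, contribute, pinning down the precise constant $\mu$, and handling the boundary cases where the slope of $\F$ equals one of the slopes in play, which is where the $F_r$-type exceptions must be tracked.
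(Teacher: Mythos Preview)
Your overall plan --- route through Corollary~\ref{betti}, force $l=0$, then separate the single $B_0$ condition from the infinite $U_0$ family --- is exactly the paper's. Two points, one a genuine gap and one a simplification, separate your argument from the paper's proof.

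\textbf{The skyscraper case is not handled.} You invoke the structure theorem to assume $\F$ is a vector bundle, but that theorem only produces a vector bundle \emph{after a grading shift} $(k)$, and Koszulity is not invariant under $(k)$. So you still owe an argument that $\Phi(\F[l])$ is never Koszul when $\F$ is an indecomposable torsion sheaf. The paper does this explicitly: for $\F=\O_z/\mathfrak m_z^d$ one computes $\sigma(\F)=P[1]$ with $P$ an indecomposable vector bundle of rank $d$ and degree $d(n-1)$, so $M(1)=\Phi(P[l+1])$, and then the vector-bundle analysis applied to $M(1)$ (whose ``Koszul'' condition is now that the nonzero Betti numbers sit on $j=i-1$) gives a contradiction for both $l=0$ and $l=1$ via the explicit values $s_1(P)=d$ and $s_2(P)=d(1-2n)$.

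\textbf{The $U_0$ step is easier than you fear.} You propose building exact triangles among the $K_j$ and invoking Harder--Narasimhan filtrations; none of that is needed. Since $\F$ and each $K_j$ are indecomposable on an elliptic curve, they are semistable, and so is $\F^\vee\otimes K_{-j}$; its cohomology is therefore governed by its degree $s_{-j}=r\,d_{-j}-d\,r_{-j}$ alone (with the usual Atiyah exception only at degree zero). The key observation is that the scalar sequence $(s_{-j})_{j\ge 0}$ satisfies the \emph{same} linear recursion $s_{-(j+1)}=(n-2)s_{-j}-s_{-(j-1)}$, so Lemma~\ref{seq} collapses the infinite family $s_{-j}\le 0$ to the two initial conditions $s_0<0$ and $\mu s_{-1}-s_0\le 0$, which are precisely $d>0$ and $d-\mu(rn-d(n-1))\ge 0$. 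A short induction (if $s_{-j}=0$ then $s_{-(j+1)}=-s_{-(j-1)}>0$) shows the degree is never zero in the Koszul range, so the Atiyah exception does not arise here. The only boundary case that survives is $s_1=nr-d=0$, which is where the clause $\F\ncong F_r(1)$ enters --- exactly as you found via Serre duality for the $B_0$ term.

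Finally, your ``checked directly'' for $l=\pm 1$ hides the one nontrivial elimination: the paper shows that for $l=1$ the conditions $\beta_{0,-1}=\beta_{1,-1}=0$ force $s_1=0$, while $\beta_{0,-2}=0$ forces $s_2\ge 0$, and together these give $nr\le 0$, impossible for a vector bundle.
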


Let us illustrate this in the case $n=5$. On the figure 3 the light grey area represents two half-spaces given by the inequalities $\mu(rn-d(n-1))-d \leq 0$ and $nr-d \geq 0$, while the dark grey area represents their intersection.

\begin{figure}[H]
\hspace*{-1cm}
\includegraphics{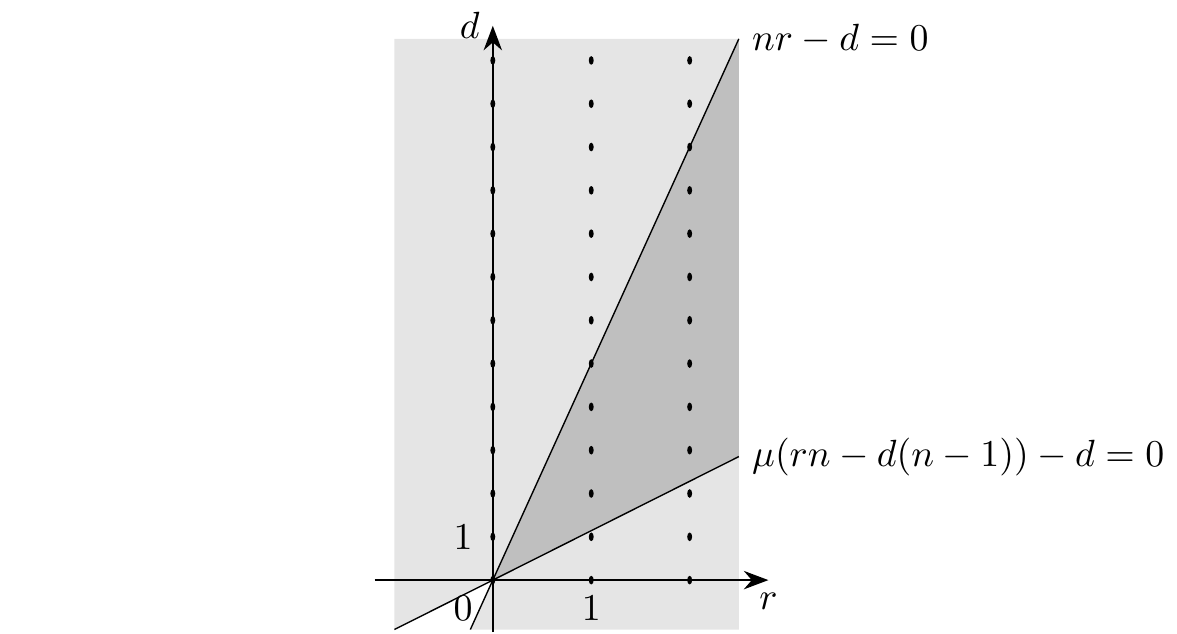}
\captionsetup{labelsep=period}
\caption{Koszul Charges, $n=5$\,.}
\end{figure}

In the last two sections of the paper we presented our results for a normal elliptic curve $E \subset \P^n$. At the end we want to mention that it is also possible to apply our methods to the cases $n=1,2$. In these two cases we obtain homogeneous coordinate rings of an elliptic curve embedded into a weighted projective space. Those rings were extensively studied by K. Saito \cite{Saito74}, under the name $\widetilde{E_8}$ ($n=1$) and $\widetilde{E_7}$ ($n=2$) elliptic singularities. 
The singularity $\widetilde{E_8}$ in Hesse form is
$$
R_{E,1} \cong k[x_0,x_1,x_2]/(x_0^6+x_1^3+x_2^2-3\psi x_0 x_1 x_2),
$$
where $\deg(x_0)=1$, $\deg(x_1)=2$, $\deg(x_2)=3$, and the singularity $\widetilde{E_7}$ has Hesse form
$$
R_{E,2} \cong k[x_0,x_1,x_2]/(x_0^4+x_1^4+x_2^2-3\psi x_0 x_1 x_2),
$$
where $\deg(x_0)=1$, $\deg(x_1)=1$, $\deg(x_2)=2$, the parameter $\psi$ satisfies $\psi^3 \neq 1$.

We show that our approach also works for weighted projective case and obtain explicit answers for Betti numbers of MCM modules over $\widetilde{E_8}$ and $\widetilde{E_7}$ singularities.

\section{Betti numbers of MCM modules}

We fix a line bundle $\L=\O(1)=\mathcal{O}(nx)$, where $n>3$\,. The goal of this section is to describe Betti numbers of MCM modules over the homogeneous coordinate ring
$$
R_E=\bigoplus_{i \geq 0} H^0(E, \L^{\otimes i}),
$$
of $E$ embedded in $\P^{n-1}$ as an elliptic normal curve.

As for the case of a plane cubic we start with the general formula for Betti numbers
$$
\beta_{i,j}(\Phi(\mathcal{F}))=\dim \Hom_{D^b(E)}(\F, V_{-j}[i]),
$$
where as before we set $V_j=\sigma^j(\O[1])$. 

In \cite{KMVdB11} (lemma $4.2.1$) the functor $\sigma$ was completely described. Here we use that description to express $\sigma$ in terms of the spherical twists $\A$ and $\B$\,. The result depends on the degree $n$ of the elliptic normal curve
\begin{Lem}
There is an isomorphism of functors $\sigma \cong \B^n \circ \A\,.$
\end{Lem}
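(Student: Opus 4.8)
The plan is to identify the endofunctor $\sigma = \Phi^{-1} \circ (1) \circ \Phi$ on $D^b(E)$ by comparing two descriptions of it: the one extracted from \cite{KMVdB11}, Lemma 4.2.1, and the composite $\B^n \circ \A$ of spherical twists that we want to match it to. First I would recall precisely what the spherical twists $\A$ and $\B$ are in this setting. On an elliptic curve, the structure sheaf $\O$ and the skyscraper sheaf (or more precisely the relevant point-like object) are spherical objects, and their twist functors $T_{\O}$ and $T_{\O(1)}$ (or the appropriate line bundles) act on charges $Z = \binom{r}{d}$ by the reflection formulas coming from the Euler pairing. So step one is bookkeeping: write down the matrices by which $\A$ and $\B$ act on the numerical Grothendieck group $K_0^{\mathrm{num}}(E) \cong \Z^2$, using that the Euler form on an elliptic curve is $\chi(\F,\G) = r(\F)d(\G) - d(\F)r(\G)$.

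Next I would compute the action of $\B^n \circ \A$ on charges and check that it agrees with the action of $\sigma$. From Theorem~1.? we already know the effect of $\sigma$ on the specific objects $V_j = \sigma^j(\O[1])$, hence on their charges $\binom{r_j}{d_j}$, and the recursion $r_{j+1} = (n-2)r_j - r_{j-1}$ (same for $d$) tells us the characteristic polynomial of the matrix of $\sigma$ is $t^2 - (n-2)t + 1$. A direct multiplication should show that the matrix of $\B^n \circ \A$ has exactly this trace and determinant, and indeed sends $\binom{1}{0}$ (the charge of $\O$, up to the shift $[1]$) to the charge of $V_1 = \O(1)[1]$. This pins the functor down on $K_0$, but of course an equivalence is not determined by its action on $K_0$, so this is only a consistency check, not a proof.

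For the actual identification of functors I would use the classification of autoequivalences of $D^b(E)$: by Orlov's theorem the group of exact autoequivalences is generated by shifts, $\otimes$ by line bundles, $\Aut(E)$, and the Fourier–Mukai-type involution, and modulo shifts and automorphisms it maps to $\widetilde{\mathrm{SL}}_2(\Z)$ (the metaplectic-type cover), with the map to $\mathrm{SL}_2(\Z)$ recording the action on $(r,d)$. Both $\sigma$ and $\B^n \circ \A$ are autoequivalences; I would show they have the same image in $\mathrm{SL}_2(\Z)$ (that is the charge computation above), the same "central" data (they both fix the right point of $E$ or act the same way on $\Pic^0$, and they agree on shifts), and hence differ at most by an autoequivalence acting trivially on $K_0$ — such autoequivalences are $\otimes(\text{degree-}0\text{ line bundle})$ composed with a shift and an automorphism, a finite-dimensional family — and then kill that ambiguity by evaluating both functors on a single well-chosen object, e.g. $\O$ itself, where Lemma~4.2.1 of \cite{KMVdB11} gives $\sigma(\O[1]) = \O(1)[1]$ on the nose and $\B^n\A(\O[1])$ can be computed from the triangles defining the twists.

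The main obstacle I expect is precisely this last step: matching the functors and not merely their shadows on $K_0$. Concretely, one has to unwind the description of $\sigma$ in \cite{KMVdB11} (which is phrased in terms of a line-bundle twist and a spherical twist adapted to the graded singularity category) and rewrite it in the language of the two twists $\A, \B$, keeping scrupulous track of which line bundle and which point the spherical objects refer to, and of the homological shifts — an off-by-one in the shift or a $\otimes$ by a stray degree-zero line bundle is exactly the kind of error that survives the $K_0$ check. I would handle this by being very explicit about the spherical objects: $\A = T_{\O}$ and $\B = T_{\O(1)} \circ \cdots$ or whatever normalization makes $T_{\O(1)} = (\mathord{-} \otimes \O(1)) \circ T_{\O} \circ (\mathord{-} \otimes \O(-1))$, so that $\B^n \A$ telescopes into a form manifestly equal to the \cite{KMVdB11} description after one line-bundle twist. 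Once the spherical objects and shifts are fixed correctly, the verification is a short diagram chase with the defining exact triangles of the twists.
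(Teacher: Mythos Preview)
The paper gives essentially no proof of this lemma: it simply states that the description of $\sigma$ in \cite{KMVdB11}, Lemma~4.2.1, can be rewritten as $\B^n\circ\A$, and records that $\A=T_{\O}$ and $\B=T_{k(x)}\cong \O(x)\otimes -$. The implicit argument is one line: since $\B\cong \O(x)\otimes -$, one has $\B^n\cong \O(nx)\otimes - = \L\otimes -$, so $\B^n\circ\A \cong (\L\otimes -)\circ T_{\O}$, and this is exactly the form of $\sigma$ supplied by Keller--Murfet--Van den Bergh. No $K_0$ computation, no classification of autoequivalences, no evaluation on test objects is needed.

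Your proposal therefore takes a much longer route than necessary. The $K_0$ check and the appeal to the structure of $\operatorname{Aut}(D^b(E))$ are correct in spirit but are detours: you yourself note in the final paragraph that the real content is ``unwind the description of $\sigma$ in \cite{KMVdB11} \dots\ and rewrite it in the language of the two twists $\A,\B$'', and that is precisely the paper's (implicit) one-line argument. Everything before that paragraph is scaffolding you do not end up needing.

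There is also a genuine wobble in your identification of $\B$. You write at one point ``$\B=T_{\O(1)}\circ\cdots$ or whatever normalization\dots'', but in this paper $\B$ is the spherical twist at the \emph{skyscraper} $k(x)$, not at a line bundle; the key standard fact is $T_{k(x)}\cong \O(x)\otimes -$. Getting this wrong is exactly the sort of error you flag as dangerous (a stray degree-zero line bundle or shift surviving the $K_0$ check), so it is worth pinning down before anything else. Once $\B=\O(x)\otimes -$ is fixed, $\B^n=\L\otimes -$ is immediate and the comparison with \cite{KMVdB11} is a direct match, with no residual ambiguity to kill.
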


Here $\A$ and $\B$ are spherical twist functors 
\begin{align*}
\mathbb{A}=T_{\O}, & \qquad \mathbb{B}=T_{k(x)}  \cong \O(x) \otimes -
\end{align*}

Let us express object $V_j$ as the translation of an indecomposable sheaf. We consider separately two cases, $j >0$ and $j \leq 0$\,. For $j > 0$ it is easy to see that
$$
V_j = K_j[j],
$$
where $K_j$ is a sheaf. But for $j \leq 0$ the translation is different:
$$
V_j = K_j[j+1],
$$
where again $K_j$ is a sheaf. For instance, $V_0 \cong \O[1]$, hence $K_0=\O$ and $V_1 \cong \O(1)[1]$, hence $K_1 = \O(1)$\,.

The properties of the sheaves $K_j$ for $j \in \Z$ are summarized in the following proposition.
\begin{Prop}
The object $K_j$ is a vector bundle over elliptic curve $E$. If we denote the charge $Z(K_j)=\left(
       \begin{matrix}
         r_j \\
         d_j
       \end{matrix}
     \right)$,
then for $j>0$ rank and degree satisfy the recursion formulae
\begin{gather*}
r_{j+1}=(n-2)r_j-r_{j-1}, \\
d_{j+1}=(n-2)d_j-d_{j-1},
\end{gather*}
and the sequences start with $r_1=1$, $r_2=n-1$, $d_1=n$ and $d_2=n^2-2n$\,.
For $j \leq 0$ we have the same recursion formulae:
\begin{gather*}
r_{j-1}=(n-2)r_j-r_{j+1}, \\
d_{j-1}=(n-2)d_j-d_{j+1},
\end{gather*}
and these sequences start with $r_0=1$, $r_{-1}=n-1$, $d_0=0$ and $d_{-1}=n$\,.
In particular if $j >0$, $\deg(K_j)=\deg(K_{-j})$, and $\rk(K_{j+1})=\rk(K_{-j})$\,.
\end{Prop}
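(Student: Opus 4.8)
The plan is to separate the statement into a \emph{numerical part} — the recursions and initial values for the charges $(r_j,d_j)$ — and a \emph{structural part} — that each $K_j$ is locally free — and then to read off the two symmetry relations from the recursions. I would carry out the numerical part first, since the structural part uses the positivity $r_j\neq 0$ that it produces.

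\emph{Numerical part.} I would compute the action of $\sigma$ on the numerical Grothendieck group $\overline{K}(E)\cong\Z^2$, recording an object by its charge $Z(\mathcal F)=(\rk\mathcal F,\deg\mathcal F)$ and using $Z(\mathcal F[1])=-Z(\mathcal F)$. Via the isomorphism $\sigma\cong\B^n\circ\A$: tensoring by the degree-one line bundle $\O(x)$ sends $(r,d)\mapsto(r,d+r)$, so $\B^n$ acts by $\left(\begin{smallmatrix}1&0\\ n&1\end{smallmatrix}\right)$; and the defining triangle $\RHom(\O,\mathcal F)\otimes\O\to\mathcal F\to T_\O\mathcal F$ of the spherical twist $\A=T_\O$ gives $[T_\O\mathcal F]=[\mathcal F]-\chi(\mathcal F)[\O]$, which on the elliptic curve $E$ — where Riemann--Roch reads $\chi(\mathcal F)=\deg\mathcal F$ — means $\A$ acts by $\left(\begin{smallmatrix}1&-1\\ 0&1\end{smallmatrix}\right)$. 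Hence $\sigma$ acts by $\Sigma:=\left(\begin{smallmatrix}1&-1\\ n&-(n-1)\end{smallmatrix}\right)$, with $\operatorname{tr}\Sigma=-(n-2)$ and $\det\Sigma=1$, so Cayley--Hamilton gives $Z(V_{j+1})+(n-2)Z(V_j)+Z(V_{j-1})=0$ for every $j\in\Z$. Inserting the shift conventions $Z(V_j)=(-1)^jZ(K_j)$ for $j>0$ and $Z(V_j)=(-1)^{j+1}Z(K_j)$ for $j\le 0$ flips the sign of the middle term and yields, separately on $j\ge 2$ and on $j\le -1$, exactly the stated ascending and descending recursions; the convention changes across $j\in\{0,1\}$, which is why the two sequences do not merge and are seeded independently. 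The seeds come from $K_0=\O$ and $K_1=\O(1)$, giving $Z(K_0)=(1,0)$ and $Z(K_1)=(1,n)$, together with $Z(K_2)=Z(V_2)=\Sigma\,Z(V_1)$ and $Z(K_{-1})=Z(V_{-1})=\Sigma^{-1}Z(V_0)$, which produce $(r_2,d_2)=(n-1,n^2-2n)$ and $(r_{-1},d_{-1})=(n-1,n)$. Finally the rank recursion, with $n-2\ge 2$ and the positive seeds $r_1=1,r_2=n-1$ (resp.\ $r_0=1,r_{-1}=n-1$), propagates positivity, so $r_j>0$ for all $j$.

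\emph{Structural part.} The object $V_0=\O[1]$ is indecomposable, being a shift of the indecomposable sheaf $\O_E$, and since $\sigma$ is an autoequivalence, $V_j=\sigma^j(V_0)$ is indecomposable for all $j$; as $V_j$ equals $K_j[j]$ (resp.\ $K_j[j+1]$) with $K_j$ a coherent sheaf, $K_j$ is indecomposable too. On the smooth curve $E$ the torsion subsheaf of any coherent sheaf splits off as a direct summand (its torsion-free quotient is locally free), so an indecomposable coherent sheaf is either a torsion sheaf or a vector bundle; the torsion alternative is excluded by $\rk K_j=r_j>0$ from the numerical part, so each $K_j$ is a vector bundle of rank $r_j$. (I would also record the sharper fact, not needed here but used later, that $V_j$ is in fact spherical — sphericality being preserved by autoequivalences — whence $K_j$ is simple and therefore, by Atiyah's classification of bundles on elliptic curves, stable.)

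\emph{Symmetries and the main difficulty.} The two closing assertions are immediate from the recursions: $k\mapsto d_{-k}$ and $k\mapsto d_k$ ($k\ge 1$) obey the same three-term recursion and agree for $k=1,2$, hence $\deg K_{-j}=\deg K_j$ for $j>0$; likewise $k\mapsto r_{1-k}$ and $k\mapsto r_k$ agree for $k=1,2$, hence $\rk K_{-j}=\rk K_{j+1}$ for $j>0$. The only genuinely delicate point is the bookkeeping of shifts and signs — it is responsible both for the two sequences being independent rather than one and for the coefficient $-(n-2)$ in the $V$-recursion becoming the $+(n-2)$ of the stated $K$-recursion after the $(-1)^j$ twist — together with fixing the orientation of the twist $T_\O$ when computing its action on $\overline K(E)$. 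Before trusting the computation I would cross-check it against the known objects $V_0=\O[1]$, $V_1=\O(1)[1]$ and against the identification of $K_2$ and $K_{-1}$ with twists of the syzygy bundle of $\O(1)$.
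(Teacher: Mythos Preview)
Your proof is correct, and the numerical part mirrors the paper closely: both identify the matrix of $\sigma$ on $K_0(E)/\rad$ and extract the three-term recursion from it (you invoke Cayley--Hamilton explicitly, the paper writes $Z(K_j)=(-c_n)^jZ(K_0)$ and remarks that this is equivalent to the stated recursion). Where you genuinely diverge is the structural part. The paper shows $K_j$ is a vector bundle by an explicit induction on $j$: assuming $K_j$ is a bundle whose degree is positive (secured by the auxiliary Lemma on the recursive sequence $\{d_j\}$), one has $\RHom(\O,K_j)\cong H^0(E,K_j)[0]$ and, by Atiyah's Lemma~8, the evaluation map $H^0(E,K_j)\otimes\O\to K_j$ is surjective; its kernel $K'_{j+1}$ is then locally free, and $K_{j+1}=K'_{j+1}\otimes\O(1)$ lands in the correct cohomological degree. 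Your route instead uses that $V_j=\sigma^j(\O[1])$ is indecomposable (autoequivalences preserve this), hence $K_j$ is an indecomposable coherent sheaf on the smooth curve $E$, hence either torsion or locally free, and you rule out torsion by the positivity $r_j>0$ of the rank sequence. This is more economical --- it bypasses Atiyah's global-generation lemma and the degree-positivity analysis entirely --- while the paper's approach buys the explicit short exact sequences linking consecutive $K_j$'s and, as a byproduct, independently verifies the shift convention $V_j=K_j[j]$ that you take as given from the setup preceding the Proposition.
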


\begin{proof}
Note that $\sigma=\mathbb{B}^n \circ \mathbb{A}$, and the induced transformation of $K_0(E)/\rad$ is
$$
c_n=B^n \circ A=\left(
                       \begin{array}{cc}
                         1 & -1 \\
                         n & 1-n \\
                       \end{array}
                \right)\,.
$$
First consider the case $j>0$ and use induction, where the base $j=1$ of the induction is clear. We see that $r_1=1$ and $d_1=n$, and we can also directly compute $r_2$ and $d_2$\,.
$$
\left(\begin{matrix}
r_2 \\
d_2
\end{matrix}
\right) = Z(K_2) = Z(V_2) = c_n Z(V_1) = c_n \left(
                       \begin{matrix}
                         -1 \\
                         -n
                       \end{matrix}
                \right) =
                \left(
                       \begin{matrix}
                         n-1 \\
                         n^2-2n
                       \end{matrix}
                \right)\,.
$$
Consequently, $d_1>0$ and it is easy to check that $$\frac{n-2+\sqrt{n^2-4n}}{2}d_2>d_1\,.$$
Lemma \ref{seq} below guarantees that $d_j$ is an increasing sequence of positive numbers.

Now we are ready to prove the induction step. Suppose that the statement is true for $j$, then we have an exact triangle
$$
\RHom(\O, K_j)\otimes \O \to K_j \to C_{j+1} \to \ldots,
$$
where $C_{j+1}$ is a cone of the evaluation map. By the induction step we know that the degree of $K_j$ is high enough so that
$$
\RHom(\O, K_j) \cong H^0(E, K_j)[0]\,.
$$
Moreover, by \cite[Lemma 8]{Atiyah57} the evaluation map is surjective, therefore, there is an exact sequence
$$
0 \to K_{j+1}' \to H^0(E, K_j) \otimes \O \to K_j \to 0
$$
of vector bundles on $E$\,. This implies that $C_{j+1} \cong K_{j+1}'[1]$ is the translation of a vector bundle, but
$$
V_{j+1}=C_{j+1} \otimes \O(nx) [j] \cong K_{j+1}' \otimes \O(1) [j+1]=K_{j+1} [j+1],
$$
where we define $K_{j+1}$ as $K_{j+1}=K_{j+1}' \otimes \O(1)$\,.

Observe that
$$
Z(V_j)=(-1)^jZ(K_j)\,.
$$
Therefore,
$$
\left(
       \begin{matrix}
         r_j \\
         d_j
       \end{matrix}
     \right)=
(-c_n)^j\left(
       \begin{matrix}
         -1 \\
         0
       \end{matrix}
     \right)=
     \left(
       \begin{matrix}
         1 & 1 \\
         -n & n-1
       \end{matrix}
     \right)^j
     \left(
       \begin{matrix}
         -1 \\
         0
       \end{matrix}
     \right),
$$
that is equivalent to the given recursion formula.

The case of $j \leq 0$ is completely analogous, but we use a dual sequence of vector bundles on the induction step and the recursion formula follows from
$$
\left(
       \begin{matrix}
         r_{-j} \\
         d_{-j}
       \end{matrix}
     \right)=
(-c_n^{-1})^j\left(
       \begin{matrix}
         1 \\
         0
       \end{matrix}
     \right)=
     \left(
       \begin{matrix}
         n-1 & -1 \\
         n & -1
       \end{matrix}
     \right)^j \left(
       \begin{matrix}
         1 \\
         0
       \end{matrix}
     \right)\,.
$$
\end{proof}

Note that although for $j \leq 0$ and $j >0$ the recursive sequences are given by the same formulae we can not unify them into one sequence because of the difference in the initial values.

\begin{Lem}\label{seq}
Let $\{s_j\}_{j=1}^\infty $ be a recursive sequence defined by
$$
s_{j+1}=(n-2)s_j-s_{j-1},
$$
and denote $\mu=\frac{n-2+\sqrt{n^2-4n}}{2}$\,. Then
\begin{enumerate}
  \item $s_1 > 0$ and $\mu s_2 - s_1 \geq 0$ if and only if the sequence $\{s_j\}$ is an increasing sequence of positive numbers.
  \item $s_1 < 0$ and $\mu s_2 - s_1 \leq 0$ if and only if the sequence $\{s_j\}$ is a decreasing sequence of negative numbers.
\end{enumerate}
\end{Lem}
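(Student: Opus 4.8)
The plan is to treat this as a constant-coefficient linear recurrence and to read the behaviour off its characteristic polynomial $p(x)=x^{2}-(n-2)x+1$. Its two roots form a reciprocal pair (the product of the roots equals the constant term $1$), the larger being exactly $\mu=\frac{n-2+\sqrt{n^{2}-4n}}{2}$ and the smaller $\mu^{-1}$; for $n\ge 5$ they are real and distinct, while for $n=4$ they degenerate to the double root $1$. In the non-degenerate case one writes $s_{j}=A\mu^{j}+B\mu^{-j}$ and solves the $2\times 2$ linear system in $A,B$ coming from the values $s_{1},s_{2}$; the only consequence I need is $A=\dfrac{\mu s_{2}-s_{1}}{\mu(\mu^{2}-1)}$, so that (as $\mu>1$) the sign of the dominant coefficient $A$ is the sign of $\mu s_{2}-s_{1}$ and the eventual monotone growth of $\{s_{j}\}$ is governed by this sign. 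It suffices to prove part~(1): part~(2) follows by applying (1) to $\{-s_{j}\}$, which satisfies the same recurrence.

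For part~(1) the forward implication is immediate: a positive increasing sequence has $s_{1}>0$ as its first term, and $s_{2}>s_{1}>0$ together with $\mu\ge 1$ gives $\mu s_{2}-s_{1}>0$. For the reverse implication I would not use the closed form but argue by induction on the invariant $0<s_{j}\le s_{j+1}$: from $s_{j+2}=(n-2)s_{j+1}-s_{j}$ one gets $s_{j+2}-s_{j+1}=(n-3)s_{j+1}-s_{j}\ge (n-3)s_{j+1}-s_{j+1}=(n-4)s_{j+1}\ge 0$ for every $n\ge 4$, and positivity propagates along with this; strictness is inherited as well (for $n\ge 5$ because $(n-4)s_{j+1}>0$ once $s_{j+1}>0$, and for $n=4$ because all successive increments equal the constant $s_{2}-s_{1}$). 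Thus the statement reduces to securing the base step $0<s_{1}\le s_{2}$ (resp.\ with a strict inequality).

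The base step is where the real content — and the main obstacle — lies. The hypothesis as written, $s_{1}>0$ and $\mu s_{2}-s_{1}\ge 0$, only fixes the sign of the dominant coefficient $A$, hence the \emph{eventual} increase of $\{s_{j}\}$; for $n\ge 5$, where $\mu>1$, it is genuinely weaker than $0<s_{1}\le s_{2}$, so one has to deduce $s_{2}\ge s_{1}$ in the situations where the Lemma is actually invoked rather than in full generality. In the applications this is automatic: there $(r_{1},r_{2})=(1,n-1)$ and $(d_{1},d_{2})=(n,n^{2}-2n)$, and for $n>3$ one has $n-1>1$ and $n^{2}-2n>n$, so $0<s_{1}<s_{2}$ outright and the induction above applies verbatim. (For an unconditional equivalence over arbitrary real initial data the cleanest fix is to replace the hypothesis by $0<s_{1}\le s_{2}$; the closed form shows $\mu s_{2}\ge s_{1}$ would be the right condition only with $\mu$ read as the subdominant root $\mu^{-1}$.) For the degenerate value $n=4$ I would substitute $s_{j}=A+Bj$ and note $s_{2}-s_{1}=B$, after which both parts are transparent.
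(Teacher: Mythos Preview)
Your route via the characteristic polynomial is exactly the paper's: it writes down the same closed form $s_j=A\mu^{j}+B\mu^{-j}$ (with $A=\dfrac{\mu s_2-s_1}{\mu(\mu^{2}-1)}$, $B=\dfrac{\mu^{2}s_1-s_2}{\mu^{2}-1}\,\mu^{2}$) for $n>4$ and $s_j=Aj+B$ for $n=4$, and then simply asserts that ``the lemma easily follows from these explicit formulae,'' without further detail. So on the level of method you match the paper; where you differ is that you actually try to close the argument and, in doing so, spot a real issue the paper glosses over.

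You are right that the forward direction of (1) as literally stated fails for general initial data when $n\ge 5$: the hypothesis $s_1>0$, $\mu s_2\ge s_1$ only fixes $\operatorname{sign}(A)$, hence only eventual monotone growth. A concrete witness in the spirit of your remark is $n=6$, $s_1=3$, $s_2=1$: then $\mu=2+\sqrt{3}$ and $\mu s_2-s_1=\sqrt{3}-1>0$, yet the sequence begins $3,1,1,3,11,\dots$ and is not increasing. Your observation that in every place the Lemma is invoked one in fact has the stronger base step $0<s_1<s_2$ (e.g.\ $(r_1,r_2)=(1,n-1)$, $(d_1,d_2)=(n,n^{2}-2n)$) is exactly the right salvage: your induction $s_{j+2}-s_{j+1}=(n-3)s_{j+1}-s_j\ge (n-4)s_{j+1}$ then carries everything, and the reduction of (2) to (1) via $s_j\mapsto -s_j$ is fine. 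In short: same approach as the paper, but you have been more careful, and your diagnosis that the biconditional is slightly overstated (while harmless for the applications) is correct.
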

\begin{proof}
The characteristic equation of the recursive sequence is
$$
\lambda^2-(n-2)\lambda+1=0\,.
$$
The roots of the equation are $\mu=\frac{n-2+\sqrt{n^2-4n}}{2}>1$ and $\mu^{-1}<1$ if $n>4$, while for $n=4$ the equation has a root of multiplicity two at the point $\lambda=1$\,. If $n>4$ the general solution of the recursive sequence is given by the formula
$$
s_j=A \mu^j+B \mu^{-j},
$$
where $A$ and $B$ are some constants uniquely determined by $s_1$ and $s_2$ by the following formulae:
\begin{gather*}
A=\frac{\mu s_2-s_1}{\mu^2-1}\frac{1}{\mu}, \\
B=\frac{\mu^2s_1-s_2}{\mu^2-1}\mu^2\,.
\end{gather*}
If $n=4$, then the general solution is
$$
s_j=Aj+B,
$$
where $A=s_2-s_1$, $B=2s_1-s_2$\,.
Now the lemma easily follows from these explicit formulae.
\end{proof}

When we study Betti tables of graded modules it is enough to describe all possibilities up to a shift of internal degree $(1)$\,. Therefore, the following proposition is very useful, because it shows that it is enough to consider vector bundles and their translations on $E$\,.
\begin{Prop}
Every indecomposable MCM module over $R_E$ after some shift $(1)$ corresponds under Orlov's equivalence $\Phi : D^b(E) \to \underline{MCM}_{\gr}(R_E)$ to an indecomposable vector bundle, possibly translated into some cohomological degree.
\end{Prop}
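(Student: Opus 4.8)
The plan is to combine two standard properties of the derived category of a smooth curve with a single explicit computation of $\sigma$ on torsion sheaves. Let $M$ be an indecomposable MCM module over $R_E$; then $\Phi^{-1}(M)$ is an indecomposable object of $D^b(E)$. Since $\Coh(E)$ has homological dimension one, $D^b(E)$ is hereditary, so $\Phi^{-1}(M)\cong\mathcal{F}[l]$ for some indecomposable coherent sheaf $\mathcal{F}$ and some $l\in\Z$. Moreover, on a smooth curve $\Ext^1_E(\mathcal{V},\mathcal{T})\cong H^1(E,\mathcal{V}^\vee\otimes\mathcal{T})=0$ whenever $\mathcal{V}$ is locally free and $\mathcal{T}$ is torsion, so the torsion subsheaf of $\mathcal{F}$ splits off as a direct summand; indecomposability then forces $\mathcal{F}$ to be either a vector bundle or a torsion sheaf. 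If $\mathcal{F}$ is a vector bundle there is nothing to prove, with trivial internal shift.

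Now suppose $\mathcal{F}$ is an indecomposable torsion sheaf, and apply $\sigma$ once. Using the identification $\sigma\cong\B^n\circ\A$ together with $\A=T_\O$ and $\B\cong\O(x)\otimes(-)$: since $\mathcal{F}$ has zero-dimensional support, $H^1(E,\mathcal{F})=0$, hence $\RHom_E(\O,\mathcal{F})\cong H^0(E,\mathcal{F})[0]$, and the evaluation map $H^0(E,\mathcal{F})\otimes\O\to\mathcal{F}$ is surjective because torsion sheaves on a curve are globally generated. Its kernel $\G$, a subsheaf of a locally free sheaf on a smooth curve, is itself a vector bundle, of rank $\operatorname{length}(\mathcal{F})\geq 1$. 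Hence $\A(\mathcal{F})=T_\O(\mathcal{F})=\cone\!\big(H^0(E,\mathcal{F})\otimes\O\to\mathcal{F}\big)\cong\G[1]$, and therefore $\sigma(\mathcal{F})\cong\B^n(\G[1])\cong(\G\otimes\O(nx))[1]$: a vector bundle translated by one. Since $\sigma$ is an autoequivalence, $\G\otimes\O(nx)$ is again indecomposable. Finally, from $\sigma=\Phi^{-1}\circ(1)\circ\Phi$ we get $\Phi\circ\sigma\cong(1)\circ\Phi$; evaluating at $\mathcal{F}[l]$ gives $M(1)\cong\Phi(\mathcal{F}[l])(1)\cong\Phi(\sigma(\mathcal{F})[l])\cong\Phi\big((\G\otimes\O(nx))[l+1]\big)$, so the shift $M(1)$ corresponds under $\Phi$ to the indecomposable vector bundle $\G\otimes\O(nx)$ translated into cohomological degree $l+1$. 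This completes the argument.

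I expect the torsion case to be the only point that needs care, and inside it the assertion that a single application of $\sigma$ already produces a vector bundle; this rests on global generation of torsion sheaves and on the clean shape of $T_\O$ applied to an object concentrated in cohomological degree zero. If one prefers to avoid invoking global generation, one may instead use the Atiyah classification \cite{Atiyah57} to reduce to $\mathcal{F}\cong i_{x\ast}(\O_{E,x}/\mathfrak{m}_x^k)$ and verify surjectivity of the evaluation map by hand; for $k=1$ this recovers $\sigma(k(x))\cong\O((n-1)x)[1]$, which agrees with the action of $c_n=B^n\circ A$ on charges and serves as a sanity check. One should also pin down once and for all the convention relating the internal shift $(1)$ on modules to $\sigma$ (rather than $\sigma^{-1}$) on $D^b(E)$; the opposite convention simply replaces $\sigma$ by $\sigma^{-1}$ throughout, and the dual computation shows $\sigma^{-1}$ likewise carries torsion sheaves to shifted vector bundles.
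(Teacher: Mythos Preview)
Your proof is correct and follows the same underlying idea as the paper: both arguments reduce to showing that the $\sigma$-orbit of a torsion sheaf contains an object of nonzero rank. The paper's proof is a one-liner that simply invokes the recursion relation for ranks established in the preceding proposition (equivalently, the matrix $c_n$ sends $(0,d)^T$ to a vector with nonzero first entry), whereas you carry out the computation of $\sigma(\mathcal{F})$ explicitly at the level of sheaves, obtaining the kernel bundle $\mathcal{G}$ and the identification $\sigma(\mathcal{F})\cong(\mathcal{G}\otimes\mathcal{O}(nx))[1]$ after a single application of $\sigma$.

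Your version is more self-contained, since it does not depend on the earlier recursion proposition, and it makes the cohomological shift and the identity of the resulting bundle transparent. It is worth noting that the paper performs exactly your computation later, in the proof of the Koszulity criterion, when it needs to handle the skyscraper-sheaf case; so your argument is in effect the one the author uses elsewhere, just deployed here instead of the terser appeal to the rank recursion. Either route is fine; yours trades brevity for explicitness.
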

\begin{proof}
The classification of the indecomposable objects of $D^b(E)$ implies that it is enough to show that in the orbit of the functor $\sigma$ there is at least one object with non-zero rank, but the last statement follows from the explicit recursion relation for ranks.
\end{proof}

If $\F$ is a vector bundle over $E$, it is clear that $\Hom_{D^b(E)}(\O, \F[i])$ can be non zero for at most two values of the parameter $i$\,. Indeed,
$$
\Hom_{D^b(E)}(\O, \F[i])=\begin{cases}
\dim H^0(E, \F), &\text{if $i=0$,} \\
\dim H^1(E, \F), &\text{if $i=1$,} \\
$0$, & \text{otherwise.}
\end{cases}
$$
We apply this observation to the general formula for the Betti numbers and get the following corollary.
\begin{Cor}\label{betti}
Let $\F[l] \in D^b(E)$ be an indecomposable object, where $\F$ is a vector bundle, then the Betti numbers of $\Phi(\F)$ are given by the following formulae
\begin{enumerate}
  \item If $j \geq 0$, then $\beta_{i,j}=\begin{cases}
\dim H^0(E, \F^\vee \otimes K_{-j}), &\text{if $i-j=l-1$,} \\
\dim H^1(E, \F^\vee \otimes K_{-j}), &\text{if $i-j=l$.} \\
\end{cases}$
  \item If $j<0$, then $\beta_{i,j}=\begin{cases}
\dim H^0(E, \F^\vee \otimes K_{-j}), &\text{if $i-j=l$,} \\
\dim H^1(E, \F^\vee \otimes K_{-j}), &\text{if $i-j=l+1$.} \\
\end{cases}$
\end{enumerate}
In particular, if the two discrete parameters $r$ and $d$ are fixed, there are only finitely many possibilities for the Betti tables of graded MCM modules over the ring $R_E$\,.
\end{Cor}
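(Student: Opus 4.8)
The plan is to derive both assertions from the general formula $\beta_{i,j}(\Phi(\mathcal{G}))=\dim\Hom_{D^b(E)}(\mathcal{G},V_{-j}[i])$ recalled at the beginning of the section, combined with the description of the objects $V_m$ as shifted sheaves. First I would take $\mathcal{G}=\F[l]$ and rewrite the formula as $\beta_{i,j}=\dim\Hom_{D^b(E)}(\F,V_{-j}[i-l])$. The one point that needs care is that the cohomological shift built into $V_m$ depends on the sign of $m$: since $V_m=K_m[m]$ for $m>0$ and $V_m=K_m[m+1]$ for $m\le0$, the object $V_{-j}$ equals $K_{-j}[-j+1]$ when $j\ge0$ and $K_{-j}[-j]$ when $j<0$. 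Substituting, applying the identity $\Hom_{D^b(E)}(\F,\mathcal{H})\cong\Hom_{D^b(E)}(\O,\F^\vee\otimes\mathcal{H})$ (valid for $\F$ locally free, together with its shifts), and using the vanishing $\Hom_{D^b(E)}(\O,\mathcal{H}[m])=H^m(E,\mathcal{H})=0$ for $m\notin\{0,1\}$ noted just above the statement, only the terms $m=0$ and $m=1$ survive, contributing $H^0(E,\F^\vee\otimes K_{-j})$ and $H^1(E,\F^\vee\otimes K_{-j})$ respectively. Since the relevant cohomological degree works out to $m=i-j+1-l$ in the case $j\ge0$ and $m=i-j-l$ in the case $j<0$, the conditions $m\in\{0,1\}$ become exactly the displayed conditions on $i-j$. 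This half is purely bookkeeping and poses no real difficulty.

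For the finiteness statement I would argue as follows. By the formula just established, for each fixed $j$ at most two of the numbers $\beta_{i,j}$ are nonzero, namely $\dim H^0(E,\F^\vee\otimes K_{-j})$ and $\dim H^1(E,\F^\vee\otimes K_{-j})$, so the whole Betti table is encoded by the cohomology dimensions $h^s(E,\F^\vee\otimes K_{-j})$, $s\in\{0,1\}$, $j\in\Z$; it is enough to show these assume only finitely many configurations as $\F$ runs over indecomposable bundles of a fixed charge $(r,d)$. An indecomposable bundle on an elliptic curve is semistable (Atiyah \cite{Atiyah57}), so $\F^\vee$ is semistable of slope $-d/r$; granting that $K_{-j}$ is indecomposable, the bundle $\F^\vee\otimes K_{-j}$ is then semistable (over a field of characteristic zero the tensor product of semistable bundles on a curve is semistable), of rank $r\,r_{-j}$ and slope $d_{-j}/r_{-j}-d/r$. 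For a semistable bundle $G$ on an elliptic curve, Riemann--Roch together with Atiyah's classification give $h^0(G)=\max(\deg G,0)$ and $h^1(G)=\max(-\deg G,0)$ as soon as $\mu(G)\ne0$, and $h^0(G)=h^1(G)\le\rk G$ when $\mu(G)=0$. Hence the pair of cohomology dimensions of $\F^\vee\otimes K_{-j}$ is completely determined by $(r,d,j,n)$ unless $r\,d_{-j}=d\,r_{-j}$, in which case it lies in the finite set $\{(a,a):0\le a\le r\,r_{-j}\}$.

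Finally, $r\,d_{-j}=d\,r_{-j}$ can hold for only finitely many $j$: the integer sequence $j\mapsto r\,d_{-j}-d\,r_{-j}$ satisfies the same second order linear recursion as $r_{-j}$ and $d_{-j}$, so if it had infinitely many zeros it would vanish identically, which is impossible since it equals $-d$ at $j=0$ and $rn-d(n-1)$ at $j=-1$, and these are not both zero. Therefore all but finitely many entries of the Betti table are pinned down by $(r,d)$, each of the remaining ones lies in a finite set, and there are only finitely many Betti tables. The step I expect to require the most attention is the claim that $K_{-j}$ is indecomposable — needed so that $\F^\vee\otimes K_{-j}$ is genuinely semistable and the sharp cohomology formulas apply — which I would obtain either by checking that the mutation construction in the preceding proposition produces indecomposable bundles, or by tracking $\gcd(r_{-j},d_{-j})$ through the recursion and applying Atiyah's bijection $\E(\rho,\delta)\cong\E(\gcd(\rho,\delta),0)\cong E$.
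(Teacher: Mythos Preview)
Your argument is correct and is exactly the unpacking of the paper's one-line proof ``Immediately follows from previous results,'' together with a careful justification of the finiteness assertion that the paper leaves implicit in the remark following the corollary. Your flagged concern about the indecomposability of $K_{-j}$ is immediate from what is already established: $\sigma$ is an autoequivalence and $\O[1]$ is indecomposable, so each $V_{-j}=\sigma^{-j}(\O[1])$, hence each $K_{-j}$, is indecomposable (and therefore semistable); note also a small slip --- the value $rn-d(n-1)$ of your auxiliary sequence $j\mapsto r\,d_{-j}-d\,r_{-j}$ occurs at $j=1$ (where $r_{-1}=n-1$, $d_{-1}=n$), not at $j=-1$, though this does not affect the conclusion.
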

\begin{proof}
Immediately follows from previous results.
\end{proof}

Note that now we have much more concrete formulae for the Betti numbers, because if the charge of $\F$ is fixed, the cohomology groups can be easily calculated, and we have at most two possibilities for each answer depending on whether $\F^\vee \otimes K_{-j} \cong F_{\rk(\F)d_{-j}}$, here $F_{\rk(\F)d_{-j}}$ denotes Atiyah's bundle of rank $\rk(\F)d_{-j}$\,.

\begin{Cor}
Let $i\gg 0$\,. Then the Betti numbers $\beta_{i,j}$ of a finitely generated graded module over $R_E$ grow linearly if $n=4$ and exponentially if $n>4$\,.
\end{Cor}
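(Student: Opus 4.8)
The plan is to reduce the growth question to the asymptotic behavior of the invariants $r_j, d_j$ of the vector bundles $K_j$, together with the Betti number formula of Corollary \ref{betti}. Fix a finitely generated graded module $N$ over $R_E$. Up to finite length summands (which contribute only finitely many Betti numbers in bounded degree and do not affect the asymptotics) we may pass to its MCM approximation, so it suffices to treat $N = \Phi(\F[l])$ for an indecomposable vector bundle $\F$ of charge $(r,d)$; a general MCM module is a finite direct sum of such pieces, and the Betti numbers are additive over direct sums, so proving the bound for each indecomposable summand is enough. By Corollary \ref{betti}, for $i - j$ in the prescribed range we have $\beta_{i,j}(\Phi(\F[l])) = \dim H^0(E, \F^\vee \otimes K_{-j})$ or $\dim H^1(E, \F^\vee \otimes K_{-j})$, and for $i \gg 0$ the relevant index is $j = i - l$ or $j = i - l \pm 1$, i.e. $-j \to -\infty$.

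Next I would estimate these cohomology dimensions. By Riemann–Roch on the elliptic curve $E$, for any vector bundle $\G$ one has $\dim H^0(E,\G) - \dim H^1(E,\G) = \deg \G$, and moreover $\dim H^0$ and $\dim H^1$ differ from $\max(\deg\G, 0)$ and $\max(-\deg\G,0)$ respectively by at most the rank of $\G$ when $\G$ is a direct sum of Atiyah bundles (the only failure of "cohomology = Euler characteristic" happens for the degree-zero Atiyah bundles $F_m$, where the discrepancy is exactly $1$ per summand, hence bounded by the rank). Here $\G = \F^\vee \otimes K_{-j}$ has rank $r \cdot r_{-j}$ and degree $r\, d_{-j} - d\, r_{-j}$. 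So up to an additive error of size $O(r_{-j})$ we get
\begin{gather*}
\beta_{i,j}(\Phi(\F[l])) = O\!\left(\,|r\, d_{-j} - d\, r_{-j}| + r_{-j}\,\right),
\end{gather*}
and a matching lower bound of the same order on the nonzero Betti numbers along the relevant line. It therefore remains to determine the growth rate of $r_{-j}$ and $d_{-j}$ as $j \to \infty$.

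Finally I would invoke the recursion of the Proposition: for $j \le 0$ the sequences $r_j, d_j$ satisfy $s_{j-1} = (n-2)s_j - s_{j+1}$, i.e. they are governed by the characteristic equation $\lambda^2 - (n-2)\lambda + 1 = 0$ analyzed in Lemma \ref{seq}. When $n = 4$ this equation has the double root $\lambda = 1$, so the general solution is $s_j = Aj + B$ and the sequences $r_{-j}, d_{-j}$ grow linearly in $j$; hence $\beta_{i,j}$ grows linearly in $i$. When $n > 4$ the roots are $\mu > 1$ and $\mu^{-1} < 1$, so $s_{-j} = A\mu^{j} + B\mu^{-j}$ with $A \neq 0$ (this uses the sign/monotonicity part of Lemma \ref{seq}, which guarantees $r_{-j}, d_{-j}$ are genuinely increasing and not eventually constant), giving exponential growth of order $\mu^{i}$ for $\beta_{i,j}$. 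The one point needing a little care — the main obstacle — is checking that the leading terms of $|r\,d_{-j} - d\,r_{-j}|$ do not cancel: since $r_j/d_j$ and $r_{-j}/d_{-j}$ converge to the slopes of the two distinct eigenvectors of $c_n$, the combination $r\,d_{-j} - d\,r_{-j}$ vanishes to leading order only for the single charge $(r,d)$ proportional to that eigenvector, and even then the subleading term (of order $r_{-j} = O(\mu^{i})$ for $n>4$, resp. $O(i)$ for $n=4$) still dominates, so the stated growth rate holds for every finitely generated graded module with nonzero Betti numbers in high homological degree.
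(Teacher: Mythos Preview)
Your approach is essentially the paper's: reduce to MCM modules (the paper invokes the depth lemma, you invoke MCM approximation --- same thing), apply Corollary~\ref{betti} to get $\beta_{i,j}\sim\deg(\F^\vee\otimes K_{-j})=p\,d_{-j}-q\,r_{-j}$, and then read off the growth from the explicit closed-form solutions in Lemma~\ref{seq}. Your additional discussion of possible cancellation in $p\,d_{-j}-q\,r_{-j}$ is more careful than the paper, but the last step is slightly off: the ``subleading term of order $r_{-j}$'' in your estimate is only an \emph{upper} bound coming from the rank, not a lower bound on the cohomology, so it cannot rescue the growth rate if the degree term vanishes. For $n>4$ this is harmless, since the eigendirection of $c_n$ has irrational slope and hence no integer charge $(p,q)$ is proportional to it, so the leading coefficient never vanishes; for $n=4$ the charges with $q=2p$ genuinely give bounded (constant) Betti numbers rather than linear growth --- a borderline case the paper's proof glosses over as well.
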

\begin{proof}
First, by the depth lemma we know that after finitely many steps in a projective resolution the syzygy modules become MCM, thus it is enough to know the asymptotical behavior of the Betti numbers of MCM modules. The condition $i \gg 0$ implies $j \gg 0$ because $i$ and $j$ differs by some constant. The previous Corollary and the formula for the dimensions of cohomology groups for a vector bundle over an elliptic curve
give us
$$
\beta_{i,j} \sim \deg(\F^\vee \otimes K_{-j})\,.
$$
If we fix the charge $Z(\F)=\left(
       \begin{matrix}
         p \\
         q
       \end{matrix}
     \right)$, then we get
$$
\beta_{i,j} \sim pd_{-j}-qr_{-j}\,.
$$
Therefore, the result follows from explicit the formulae in the proof of Lemma \ref{seq}.
\end{proof}

Our next goal is to describe the possible shapes of the Betti tables. We will see that all potentially nonzero Betti numbers
lie on the line $D_l=\{(i,j)\in \Z^2 | j=i-l\}$, the half-line $U_l=\{(i,j) \in \Z^2 | j=i-l+1, i \geq -1\}$, and the half-line $B_l=\{(i,j) \in \Z^2 |j=i-l-1, i \leq 0 \}$\,.

\begin{figure}[H]
\centering
\includegraphics{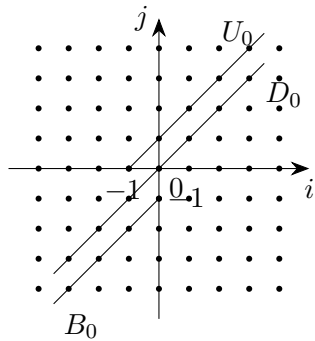}
\captionsetup{labelsep=period}
\caption{Lines $U_0$, $D_0$, $B_0$.}
\end{figure}

From the figure we see that the differentials have only linear and quadratic components, with the possible exception at $i=0$.

Because the calculation of the Betti numbers was reduced to the calculation of dimensions of cohomology groups of $\F^\vee \otimes K_{-j}$, and on an elliptic curve dimensions of cohomology group are almost(except in the special case of Atiyah bundles) defined by degrees, we are going to study the associated sequence of degrees. For a vector bundle $\F$ with charge $Z(\F)=\left(
       \begin{matrix}
         p \\
         q
       \end{matrix}
     \right)$
we define sequence of integer numbers by the following formula:
$$
s_{j}(\F)=\deg(\F^\vee \otimes K_{j})=pd_{j}-qr_{j},
$$
where $j \in \Z$\,. We will write just $s_j$ if $\F$ is understood. The sequence $\{s_j\}$ satisfies the same recursive relation that can be checked directly:
\begin{gather*}
(n-2)s_j-s_{j-1}=(n-2)(pd_{-j}-q r_{-j})-pd_{-j+1}+q r_{-j+1}= \\
=p((n-2)d_{-j}-d_{-j+1})-q((n-2)r_{-j}-r_{-j+1})=pd_{-j-1}-q r_{-j-1}=s_{j+1}\,.
\end{gather*}
Moreover, the formulae for degrees and ranks give us the initial values of the recursive sequences
\begin{gather*}
s_2= p (n^2-2n)-q (n-1),\\
s_1= p n-q,\\
s_0= -q, \\
s_{-1}=p n-q(n-1)\,. \\
\end{gather*}
As in the case of the degrees of the $K_j$ we have the same formula for $s_j$ for $j>0$ and $j \leq 0$, but two initial values need to be modified by direct computation, for instance $s_{-1} \neq (n-2)s_0-s_1$\,. Note that all numbers $s_j$ are linear functions of the variables $p$ and $q$\,. When we write some condition for $s_j(\F)$, we mean that condition on the charge of the vector bundle $\F$\,. The properties of the Betti numbers can be extracted from the properties of the sequence $s_j$ by some elementary analytical calculations.

\begin{Prop}
For fixed $i \in \Z$ the Betti numbers $\beta_{i, j}$ are non zero for only one value of $j$, except for finitely many values of $i$\,. In particular, if $|i|$ is big enough, non-vanishing Betti numbers concentrate along only one of the lines $U_l$ or $D_l$ for $i>0$ and $D_l$ or $B_l$ for $i<0$\,. Moreover, at most one jump between the lines $U_l$ and $D_l$ or $D_l$ and $B_l$ can occur.
\end{Prop}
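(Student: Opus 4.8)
The plan is to read the ``active lines'' off column by column from Corollary \ref{betti} and reduce everything to the sign behaviour of the degree sequence $s_m(\F)=pd_m-qr_m$. Fix an indecomposable $M=\Phi(\F[l])$ with $\F$ a vector bundle of charge $(p,q)$, $p=\rk\F\geq 1$. First I would record, for each column $i$, which of $U_l,D_l,B_l$ it meets and which cohomology group sits there. For $i\geq l+1$ the column meets only $D_l$ (at $j=i-l$, with value $\dim H^1(E,\F^\vee\otimes K_{l-i})$) and $U_l$ (at $j=i-l+1$, with value $\dim H^0(E,\F^\vee\otimes K_{l-i-1})$); for $i\leq l-2$ it meets only $D_l$ (at $j=i-l$, value $\dim H^0(E,\F^\vee\otimes K_{l-i})$) and $B_l$ (at $j=i-l-1$, value $\dim H^1(E,\F^\vee\otimes K_{l-i+1})$); and only the two middle columns $i\in\{l-1,l\}$ meet all three lines. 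Since $\F$ is indecomposable on an elliptic curve it is semistable, and tensoring the semistable bundles $K_m$ by it preserves this, so for $G=\F^\vee\otimes K_m$ one has $\dim H^1(G)=0$ and $\dim H^0(G)=\deg G$ when $\deg G>0$, $\dim H^0(G)=0$ and $\dim H^1(G)=-\deg G$ when $\deg G<0$, and both can be nonzero only in the Atiyah-bundle case $\deg G=0$. Thus every entry above is governed by the integer $s_m(\F)=\deg(\F^\vee\otimes K_m)$.

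Next I would supply the arithmetic. The two sequences $\{s_j\}_{j>0}$ and $\{s_j\}_{j\leq 0}$ (the forward and the backward sequence) each satisfy $s_{j+1}=(n-2)s_j-s_{j-1}$ with the initial values recorded before the proposition, and neither is identically zero: $s_1=s_2=0$ forces $q=pn$ and then $-pn=0$, while $s_0=s_{-1}=0$ forces $q=0$ and $pn=0$, both impossible as $p\geq 1$. Writing the general solution as $s_j=A\mu^j+B\mu^{-j}$ for $n>4$ or $s_j=Aj+B$ for $n=4$, each sequence has a definite nonzero sign $\epsilon_+$ for $j\gg 0$ and $\epsilon_-$ for $j\ll 0$, and changes sign at most once: for $n>4$ because $x\mapsto A\mu^x+B\mu^{-x}$ is unimodal and is of constant sign as soon as $A$ and $B$ agree in sign, while $A,B\neq 0$ since $\mu$ is irrational and an integer-valued solution cannot be a nonzero multiple of $\mu^{\pm j}$; for $n=4$ because $s_j$ is then affine in $j$, equal to the nonzero constant $\pm 2p$ in the degenerate case $A=0$. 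This is Lemma \ref{seq} with a routine strengthening.

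Feeding this in finishes the proof. For $i\gg 0$ the indices $l-i$ and $l-i-1$ lie deep in the negative range, so $s_{l-i}$ and $s_{l-i-1}$ both have sign $\epsilon_-$; if $\epsilon_-=+1$ the $D_l$-entry (an $H^1$) vanishes while the $U_l$-entry equals $s_{l-i-1}>0$, so only $U_l$ survives, and if $\epsilon_-=-1$ only $D_l$ survives. Symmetrically, for $i\ll 0$ the indices $l-i$ and $l-i+1$ lie deep in the positive range, and only $D_l$ survives if $\epsilon_+=+1$, only $B_l$ if $\epsilon_+=-1$. Since an active line meets each column in a single point, this gives statements (1) and (2). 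For the last clause, as $i$ decreases through $i\geq l+1$ the pair $(s_{l-i},s_{l-i-1})$ runs through consecutive tail terms of the backward sequence $\{s_j\}_{j\leq 0}$, which changes sign at most once, so at most one column of this range witnesses a switch of the surviving line between $U_l$ and $D_l$ (and at most one column has both entries nonzero); likewise at most one switch between $D_l$ and $B_l$ occurs over $i\leq l-2$. Together with the two middle columns this exhausts the exceptional values of $i$.

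The step that demands the most care is the cohomological dictionary: one must know that the $K_j$, hence $\F^\vee\otimes K_j$, are semistable, so that positive degree forces $H^1=0$ and negative degree forces $H^0=0$, and one must keep the degree-zero (Atiyah-bundle) ambiguity confined to finitely many columns --- which is precisely the assertion that $s_j\neq 0$ for all but finitely many $j$. Granting these inputs, what remains is the column bookkeeping above together with the elementary unimodality remark about second-order linear recursions.
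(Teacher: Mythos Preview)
Your proof is correct and follows essentially the same line as the paper's: both reduce the question to the sign behaviour of the degree sequence $s_j=pd_j-qr_j$ via the closed-form solution $s_j=A\mu^j+B\mu^{-j}$ (resp.\ $s_j=Aj+B$ when $n=4$) of the linear recursion, treating the forward and backward halves separately.

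The differences are of emphasis rather than substance. You are more careful on two points the paper glosses over: you justify the cohomological dictionary by invoking semistability of $\F$ and of the $K_m$ (the latter being indecomposable since $\sigma$ is an autoequivalence) so that the sign of $\deg(\F^\vee\otimes K_m)$ really does decide which of $H^0,H^1$ vanishes; and you rule out the degenerate cases $A=0$ or $B=0$ by the irrationality of $\mu$ together with the integrality of the $s_j$, whereas the paper simply writes $s_j\sim A_+\mu^j$ without comment. Conversely, the paper goes further than the proposition strictly requires: it solves $s_j=0$ explicitly and records the resulting linear inequalities in $(p,q)$ that detect whether a jump actually occurs, information you do not extract. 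Your monotonicity argument (when $AB<0$ the function $x\mapsto A\mu^x+B\mu^{-x}$ is strictly monotone, when $AB>0$ it has constant sign) is a cleaner substitute for that explicit computation and suffices for the statement as given. One terminological quibble: ``unimodal'' is not quite the right word for the monotone case, but your parenthetical clarifications make clear what you mean.
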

\begin{proof}
We start with the generic case $n>4$\,. Because we modify the initial values it makes sense to consider the sequences $\{s_j\}_{j>0}$ and $\{s_j\}_{j \leq 0}$ separately. For $j>0$ we have
$$
s_j=A_+ \mu^j+B_+ \mu^{-j},
$$
where
\begin{align*}
A_+=\frac{\mu s_2-s_1}{\mu^2-1}\frac{1}{\mu}, &\quad B_+=\frac{\mu^2s_1-s_2}{\mu^2-1}\mu^2\,.
\end{align*}
For $j \leq 0$
$$
s_j=A_- \mu^{j}+B_- \mu^{-j},
$$
where
\begin{align*}
A_{-}=\frac{\mu s_0-s_1}{\mu^2-1}\mu, & \quad B_{-}=\frac{\mu s_{-1}-s_0}{\mu^2-1}\,.
\end{align*}
Thus for $j \gg 0$ approximately $s_j \sim A_+ \mu^j$, and for $j \ll 0$ approximately $s_j \sim B_{-} \mu^{-j}$\,.
For example, if $A_+$ is positive and $j >0$, then after finitely many steps $s_j$ becomes an increasing sequence of positive numbers. It means that all non-vanishing Betti numbers lie on the line $U_l$\,. The other cases can be treated similarly.

A jump of the Betti numbers corresponds to a change of sign in the sequence $s_j$, therefore, for some $j \in \mathbb{R}$ we should have $s_j=0$, and such an equation can be solved explicitly. For $j > 0$ the solution is given by the formula
$$
j=\frac{1}{2} \log_\mu \left(-\frac{B_+}{A_+}\right)=\frac{1}{2} \log_\mu \left(\frac{\mu^2 s_1-s_2}{s_1- \mu s_2}\right)+\frac{3}{2},
$$
and a solution exists if and only if
\begin{align*}
    \left\{
    \begin{aligned}
    s_1 & > \mu s_2 \\
    \mu^2 s_1-s_2 &> \mu^{-3}(s_1-\mu s_2),
    \end{aligned}
    \right.
    &\quad \text{or} \quad
    \left\{
    \begin{aligned}
    s_1 &< \mu s_2 \\
    \mu^2 s_1-s_2 &< \mu^{-3}(s_1-\mu s_2)\,.
    \end{aligned}
    \right.
\end{align*}
These conditions are linear in the variables $p$ and $q$, thus the plane $K_0(E)/\rad \otimes \R$ is divided into four chambers by two lines $s_1-\mu s_2=0$ and $\mu^2 s_1-s_2 - \mu^{-3}(s_1-\mu s_2)=0$\,. Moreover, we work under the assumption that $\F$ is a vector bundle, that is $p > 0$\,. If $(p, q)$ is an integer point in one of the two chambers described by the system of inequalities above and $p>0$, then the Betti numbers jump between the lines $U_l$ or $D_l$\,.

For $j \leq 0$ a solution of the equation is
$$
j=\frac{1}{2} \log_\mu \left(-\frac{B_{-}}{A_{-}}\right)=\frac{1}{2}\log_\mu \left( \frac{s_0-\mu s_{-1}}{\mu s_0-s_{-1}}\right)- \frac{1}{2}\,.
$$
First note that $s_0=0$ is an obvious solution, and the condition $s_0=0$ is equivalent to $j=0$\,. In terms of the parameters $p$ and $q$ it means that for a vector bundle $\F$ such that the degree is zero, $s_0=q=\deg(\F)=0$, a jump always happens at the $j=0$ level. Other solutions exist if and only if
\begin{align*}
    \left\{
    \begin{aligned}
    s_0 - \mu s_{-1} & > 0 \\
    s_0 & < 0,
    \end{aligned}
    \right.
    &\quad \text{or} \quad
    \left\{
    \begin{aligned}
    s_0 - \mu s_{-1} & < 0 \\
    s_0 & > 0\,.
    \end{aligned}
    \right.
\end{align*}
The description of such solutions in terms of the geometry of $K_0(E) \otimes \R$ is similar to the case $j >0$\,.

For $n=4$ we repeat the same argument, but instead of logarithmic conditions we get linear conditions. For completeness of exposition we derive inequalities that describe the jumps of the Betti numbers. For $j > 0$
$$
s_j=A_{+}j+B_{+},
$$
where
\begin{align*}
A_{+}=s_2-s_1, & \quad B_{+}=2s_1-s_2\,.
\end{align*}
The same argument as before shows that the conditions for a jump are
\begin{align*}
    \left\{
    \begin{aligned}
    s_2-s_1 &< 0 \\
    s_2-2s_1 & < 0,
    \end{aligned}
    \right.
    &\quad \text{or} \quad
    \left\{
    \begin{aligned}
    s_2-s_1 & >0 \\
    s_2-2s_1 & > 0\,.
    \end{aligned}
    \right.
\end{align*}

Finally, for $ j \leq 0$
$$
s_j=A_{-}j+B_{-},
$$
where
\begin{align*}
A_{+}=s_0-s_{-1}, & \quad B_{-}=s_0\,.
\end{align*}
Again we consider the case $s_0=0$ (or equivalently $j=0$) separately, and the conclusion in this case is the same as for $n>4$\,. For the other cases the following condition is equivalent to a jump:
 \begin{align*}
    \left\{
    \begin{aligned}
    s_0 &< 0 \\
    s_0 -s_1 & < 0,
    \end{aligned}
    \right.
    &\quad \text{or} \quad
    \left\{
    \begin{aligned}
    s_0 & > 0 \\
    s_0 -s_1 & > 0\,.
    \end{aligned}
    \right.
\end{align*}
\end{proof}
Note that if $j$, given by the previous formulae, is an integer number and $\F^\vee \otimes K_{-j} \cong F_{pr_{-j}}$ at a jump point, then there are two non-vanishing Betti numbers, both equal to one, otherwise there is only one non-vanishing number located on one of the lines.

\bigskip
\bigskip

\section{Koszulity of MCM modules over the ring \texorpdfstring{$R_E$}{RE}}

In this section we apply the methods developed earlier to derive Koszulity criteria for MCM modules over the ring $R_E$\,. Let us recall the definition of the Koszul and CoKoszul properties.

\begin{Def}
Let $R=\bigoplus_{i \geq 0} R_i$ be a connected graded ring, and let $M$ be a finitely generated graded module over $R$\,. Then
\begin{itemize}
  \item $M$ is called Koszul if $\beta_{i,j}(M)=0$ for $i \neq j$ for $i \geq 0$
  \item $M$ is called Cokoszul if $\beta_{i,j}(M)=0$ for $i \neq j$ for $i \leq 0$
  \end{itemize}
\end{Def}

We start with a Koszulity criterion for MCM modules over $R_E$\,.

\begin{Thm}
An indecomposable MCM module $M=\Phi(\F[l])$ is Koszul if and only if $l=0$ and $\F$ is an indecomposable vector bundle with charge
$Z(\F)=\left(
       \begin{matrix}
         p \\
         q
       \end{matrix}
     \right),$
satisfying the conditions
\begin{itemize}
  \item $s_1 \geq 0$, and if $s_1=0$ then $\F \ncong F_p(1)$,
  \item $s_{0} < 0$, $\mu s_{-1} - s_{0} \leq 0$\,.
\end{itemize}
\end{Thm}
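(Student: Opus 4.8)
The plan is to feed Koszulity, i.e.\ the vanishing of $\beta_{i,j}(M)$ for all $i\neq j$ with $i\ge 0$, through Corollary~\ref{betti}, which rewrites every graded Betti number of $M=\Phi(\F[l])$ as the dimension of an explicit $H^0$ or $H^1$ of one of the bundles $\F^\vee\otimes K_{-j}$. Two facts about elliptic curves will do the cohomological work. First, every indecomposable bundle on $E$ is semistable (Atiyah), so $\F$ and each $K_j$ are semistable; hence $H^0(\F^\vee\otimes K_j)=\Hom_{D^b(E)}(\F,K_j)$ vanishes as soon as the slope of $\F$ exceeds that of $K_j$, that is as soon as $s_j(\F)<0$, and by Serre duality ($K_E=\O$) one has $H^1(\F^\vee\otimes K_j)\cong H^0(\F\otimes K_j^\vee)^\vee$, to which the same slope comparison applies. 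On $E$ cohomology is determined by the degree except for the Atiyah bundles $F_r$ and their degree-zero twists, and keeping track of these boundary cases is where the subtlety sits.

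First I would show $l=0$ is forced by looking at the bottom row $i=0$. Corollary~\ref{betti} shows that for $l\le -1$ the only possibly nonzero $\beta_{0,j}$ sit at $j=-l$ and $j=1-l$ (both positive), and for $l\ge 2$ only at $j=-l$ and $j=-l-1$ (both negative); in either case $\beta_{0,0}=0$, so Koszulity forces every $\beta_{0,j}=0$, hence $M=0$, contradicting $\F\ne 0$ and $\Phi$ being an equivalence. The remaining case $l=1$ takes one extra line: Koszulity forces the off-diagonal entries $\beta_{0,-1}=\dim H^0(\F^\vee(1))$ and $\beta_{1,-1}=\dim H^1(\F^\vee(1))=\dim H^0(\F(-1))$ to vanish, which by the slope argument is only possible if $s_1(\F)=0$; but then $q=pn$, the recurrence gives $s_2(\F)=-pn<0$, and so $\beta_{0,-2}=\dim H^1(\F^\vee\otimes K_2)=\dim H^0((\F^\vee\otimes K_2)^\vee)>0$ because the dual bundle then has positive degree, contradicting Koszulity. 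Hence $l=0$.

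With $l=0$, the description of the lines $D_0,U_0,B_0$ together with Corollary~\ref{betti} shows that the only Betti numbers that can be off the diagonal with $i\ge 0$ are $\beta_{i,i+1}=\dim H^0(\F^\vee\otimes K_{-i-1})$ for $i\ge 0$ (on $U_0$) and $\beta_{0,-1}=\dim H^1(\F^\vee\otimes K_1)$ (on $B_0$). So $M$ is Koszul if and only if $H^0(\F^\vee\otimes K_{-m})=0$ for all $m\ge 1$ and $H^1(\F^\vee\otimes K_1)=0$. For the first family, $H^0(\F^\vee\otimes K_{-m})=\Hom(\F,K_{-m})$ vanishes when $s_{-m}<0$; the borderline $s_{-m}=0$ (no Atiyah summand) I would rule out using the recurrence $s_{-m-1}=(n-2)s_{-m}-s_{-m+1}$ together with $s_0=-q$, $s_{-1}=pn-q(n-1)$ and $p>0$, which force $s_{-1}>0$ as soon as $s_0\ge 0$, so an equality $s_{-m}=0$ at some $m\ge 1$ cannot coexist with the other vanishings. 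Thus the first family is equivalent to $s_{-m}<0$ for all $m\ge 1$, and since $p>0$ already forces $s_0<0$ in that situation, Lemma~\ref{seq} applied to $\{s_{-m}\}_{m\ge 0}$ (with the index shifted) turns this into precisely "$s_0<0$ and $\mu s_{-1}-s_0\le 0$". For the second, $H^1(\F^\vee\otimes K_1)=H^1(\F^\vee(1))=H^0(\F(-1))$ with $\deg\F(-1)=-s_1$, so it vanishes exactly when $s_1>0$, or when $s_1=0$ and the degree-zero indecomposable $\F(-1)$ is not $F_p$, i.e.\ $\F\ncong F_p(1)$ (Atiyah bundles being self-dual). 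Together with $l=0$ this is the stated criterion; the case $n=4$ is identical, using the linear rather than exponential closed form in Lemma~\ref{seq}.

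The main obstacle is exactly the degree-zero bookkeeping: on $E$ the Betti numbers are not literally functions of the $s_j$, so each "$s_j=0$" boundary of the slope comparisons must be analysed separately, and one must verify that the apparent extra Koszul charges living on the lines $s_{-1}=0$ and $s_1=0$ are genuinely forbidden except for the single honest exception $\F\cong F_p(1)$ — which is what pins down both the non-strict inequality $\mu s_{-1}-s_0\le 0$ and the clause "if $s_1=0$ then $\F\ncong F_p(1)$". Translating "$s_{-m}<0$ for all $m\ge 1$" into the two linear inequalities is then routine, but it genuinely uses $p>0$ to discard the parameter range where $\{s_{-m}\}_{m\ge1}$ is negative yet $s_0\ge 0$.
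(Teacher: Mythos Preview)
Your argument for vector bundles is essentially the paper's: reduce to $l\in\{0,1\}$ via the shape of the Betti table, kill $l=1$ by forcing $s_1=0$ and then exhibiting a nonzero off-diagonal entry, and for $l=0$ translate Koszulity into $H^1(\F^\vee\otimes K_1)=0$ together with $H^0(\F^\vee\otimes K_{-m})=0$ for $m\ge 1$, then use the recurrence and Lemma~\ref{seq} to convert the latter into $s_0<0$, $\mu s_{-1}-s_0\le 0$. Your elimination of $l\le -1$ and $l\ge 2$ by checking $\beta_{0,0}=0$ is a bit more explicit than the paper's one-line appeal to the shape description, but it is the same idea.

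There is, however, a genuine gap. The statement asserts in particular that $\F$ must be a vector bundle, and an indecomposable sheaf on $E$ is either a vector bundle or a torsion sheaf $\O_z/m_z^d$. Your entire argument presupposes $\F$ is locally free: you invoke Corollary~\ref{betti} (stated for bundles), write $\F^\vee$, and use semistability of $\F$. The paper treats the torsion case separately: for a skyscraper $\F$ one applies $\sigma$ once to produce a vector bundle $P$ with $\rk P=d$, $\deg P=d(n-1)$, so that $\Phi(P[l+1])=M(1)$; then one checks directly that the resulting $s_1(P)$, $s_2(P)$ never satisfy the Koszulity constraints. Without this step your ``only if'' direction is incomplete. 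The fix is short (for $l=0$ and $\F$ torsion one has $\beta_{0,-1}=\dim\Ext^1(\F,K_1)=\dim\Hom(\O(1),\F)\ne 0$, and the other $l$ fall to your $\beta_{0,0}=0$ argument since $\Hom(\F,K_{-j})=0$), but it must be said.
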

\begin{proof}
Let $M=\Phi(\F[l])$ be an indecomposable Koszul module, then $\F$ is an indecomposable object of $D^b(E)$\,. There are two possibilities for $\F$: $\F$ is a vector bundle, or $\F$ is a skyscraper sheaf. We start with the case of a vector bundle. By the description of the shape of the Betti table given earlier, we conclude that $M=\Phi(\F[l])$ can be Koszul only for $l=0$, or $l=1$\,. First let us assume that $l=1$\,. Corollary \ref{betti} in this case shows that $\beta_{0,-2}=0$ implies $s_2 \geq 0$, and $\beta_{0, -1}=0, \beta_{1,-1}=0$ implies that $s_1=0$\,. Thus we get the conditions
$$
    \left\{
    \begin{aligned}
    s_2=p (n^2-2n)-q (n-1) & \geq 0 \\
    s_1=p n -q & = 0,
    \end{aligned}
    \right.
$$
which imply that $n p \leq 0$, but we assume that $\F$ is a vector bundle, that is $p >0$\,. This contradiction shows that there are no Koszul MCM modules of the form $\Phi(\F[1])$\,.

For modules of the form $\Phi(\F)$ Koszulity is equivalent to two conditions: $\beta_{0, -1}=0$ and $\beta_{i,i+1}=0$ for $i \geq 0$\,. The first condition is equivalent to the condition $s_1 \geq 0$, and if $s_1=0$, we need to assume that $\F^\vee \otimes K_1 \ncong F_{pr_1}$. Note that $r_1=1$ and the last condition is equivalent to
$$
\F \ncong F_p(1).
$$

The second condition is equivalent to $s_{-j} \leq 0$ for $j \geq 1$, where, in principle, for the cases $s_{-j}=0$ we again need the additional assumption that we stay away from Atiyah bundles, but we are going to show that $s_{-j} < 0$\,. Note also that we need to assume $s_0 \leq 0$, because otherwise $\beta_{0,0}=0$, and we get a trivial module. In fact, if $s_0=0$, then $s_1=pn>0$, and we arrive at a contradiction, and, by induction, if $s_{-j}=0$ for some $j \geq 0$, then $s_{-(j+1)} > 0$ which contradicts our assumptions. Applying lemma \ref{seq} we conclude that the second condition is equivalent to $s_0 < 0$ and $s_{-1} \mu - s_0 \leq 0$\,.

Let $\F$ be a skyscraper sheaf. Since $\F$ is indecomposable, it has the form
$$
\F=\mathcal{O}_z/m_z^d,
$$
for some point $z \in E$ and $d \geq 1$\,. Let us apply the functor $\sigma$ to $\F$
$$
\sigma(\F)=\cone(\RHom(\O, \F)\otimes \O \to \F) \otimes \O(nx)\,.
$$
Higher cohomology groups of a skyscraper sheaf vanish, $\RHom(\O, \F) \cong k^d[0]$\,. The evaluation map $k^d \otimes \O \to \mathcal{O}_z/m_z^d$ is surjective, and there is a short exact sequence
$$
0 \to P' \to k^d \otimes \O \to \mathcal{O}_z/m_z^d \to 0,
$$
where $P'$ is the kernel of the evaluation map. Therefore, $\sigma(\F)=P' \otimes \O(nx) [1]$, because $\sigma$ is an autoequivalence and $\F$ is indecomposable $\sigma(\F)$ is indecomposable and $P'$ is also indecomposable. The short exact sequence above implies that the rank of $P'$ is positive, thus $P'$ is an indecomposable vector bundle. It is more convenient to work with the vector bundle
$$
P=P' \otimes \O(nx),
$$
it is easy to see that $\rk(P)=d$ and $\deg(P)=d(n-1)$\,. In terms of the vector bundle $P$ the module $M$ is given by the following isomorphism
$$
\Phi(P[l+1])=M(1)\,.
$$
We reduced now the skyscraper sheaf case to the vector bundle case where all previous results can be applied. The Betti numbers $\beta_{i,j}(M(1))=\beta_{i,j+1}(M)$ of $M(1)$ are only non-zero along the line $j=i-1$, which is only possible for $l=0,1$\,. If $l=0$, one of the conditions $\beta_{0,-2}(M(1))=0$ is equivalent to $s_2(P)>0$, but explicit calculation shows that $s_2(P)=d(1-2n)<0$ if $n>0$, so we arrive at a contradiction. If $l=1$ conditions $\beta_{2,-1}(M(1))=\beta_{1,-1}(M(1))=0$ imply $s_1(P)=0$, but again by explicit calculation $s_1(P)=d$, thus $d=0$, and we arrive at another contradiction. Therefore, the equivalence $\Phi$ never maps skyscraper sheaves to Koszul MCM modules.
\end{proof}

For convenience of the reader we reformulate the conditions of the Theorem in terms of variables $p$ and $q$: $M=\Phi(\F)$ is Koszul if and only if the charge of the vector bundle $\F$ satisfies
\begin{align*}
    \left\{
    \begin{aligned}
    p > 0 \\
    q > 0 \\
    \mu(pn-q(n-1))-q \leq 0\\
    np-q \geq 0
    \end{aligned}
    \right.
\end{align*}
On the plane $(p, q)$ charges of vector bundles corresponding to Koszul modules coincide with the integer points in the closed domain between the two lines $\mu(pn-q(n-1))-q=0$ and $np-q=0$ in the first quadrant.

We illustrate this description with the example $n=5$ on the Figure 2. On this figure the light grey area represents two half-spaces given by inequalities $\mu(pn-q(n-1))-q \leq 0$ and $np-q \geq 0$, while the dark grey area represents their intersection.

\begin{figure}[H]
\centering
\includegraphics{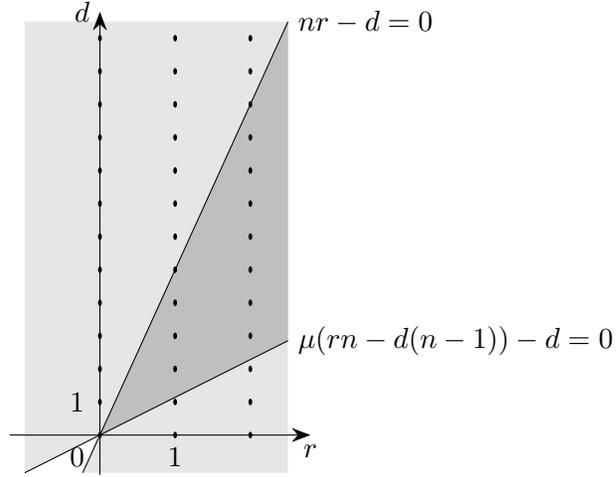}
\captionsetup{labelsep=period}
\caption{Koszul charges, $n=5$\,.}
\end{figure}

The prove of the following Cokoszulity criterion is based on the same case-by-case analysis and does not involve any new ideas.

\begin{Thm}
An indecomposable MCM module $M=\Phi(\F[l])$ is Cokoszul if and only if $l=0$ and $\F$ is an indecomposable vector bundle with charge
$Z(\F)=\left(
       \begin{matrix}
         p \\
         q
       \end{matrix}
     \right),$
satisfying conditions
\begin{itemize}
  \item $s_{-1} \leq 0$, and if $s_{-1}=0$ then $\F \otimes K_{-1}^\vee \ncong F_{p(n-1)}$,
  \item $s_{0} > 0$, $\mu s_{1} - s_{0} \geq 0$\,.
\end{itemize}
\end{Thm}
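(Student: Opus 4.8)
I would prove the Cokoszulity criterion by the method used for the Koszulity criterion, reading the Betti table from the other end: Cokoszulity constrains $\beta_{i,j}$ for $i\le 0$, so the half-line $B_l$ now plays the role that $U_l$ played there. Let $M=\Phi(\F[l])$ be indecomposable and Cokoszul; then $\F$ is an indecomposable object of $D^b(E)$, hence either an indecomposable vector bundle or a skyscraper sheaf. Treat the vector bundle case first. The description of the shape of the Betti table shows that for $i\ll 0$ the nonvanishing Betti numbers lie on $D_l$ or on $B_l$, and Cokoszulity forces them onto the diagonal $j=i$, so $l=0$ or $l=-1$. I would exclude $l=-1$ exactly as $l=1$ was excluded for Koszulity: Corollary \ref{betti} shows that for $l=-1$ every possible generator of $M$ occurs in cohomological degree $i=0$ at an internal degree strictly away from $0$, so Cokoszulity makes all of them vanish and $M=0$.

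For $l=0$, Cokoszulity of $M=\Phi(\F)$ is equivalent to $\beta_{-1,0}=\beta_{0,1}=0$ together with $\beta_{i,i-1}=0$ for all $i\le 0$. Using Corollary \ref{betti} and the formula for the cohomology of a vector bundle on an elliptic curve I would translate the vanishing of $\beta_{0,1}$ into a condition on $s_{-1}$ (namely $s_{-1}\le 0$, with the Atiyah exception $\F\otimes K_{-1}^\vee\ncong F_{p(n-1)}$ when $s_{-1}=0$), and the vanishing of $\beta_{i,i-1}$ for $i\le 0$ into the statement that the relevant one-sided tail of the sequence $\{s_j\}$ consists of positive numbers. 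Feeding this into Lemma \ref{seq}, handling the modified junction of the recursion near $j=0$ and using the nonvanishing of the surviving generator of $M$ to discard the degenerate and Atiyah boundary cases exactly as in the proof of the Koszulity theorem, yields the remaining inequalities $s_0>0$ and $\mu s_1-s_0\ge 0$. The converse implication follows by reversing these equivalences.

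It remains to dispose of the skyscraper case. For $\F=\O_z/\mathfrak m_z^d$ the triangle defining $\sigma(\F)$ exhibits $\sigma(\F)$ as $P'\otimes\O(nx)[1]$ with $P'$ an indecomposable vector bundle, and after a twist $\Phi(P[l+1])=M(1)$ where $\rk P=d$ and $\deg P=d(n-1)$. Applying the preceding analysis to $M(1)$ and substituting the charge $(d,d(n-1))$ into the explicit expressions for $s_1(P)$, $s_2(P)$ and $s_{-1}(P)$, one checks that the Cokoszulity inequalities are never met, so $\Phi$ carries no skyscraper sheaf to a Cokoszul module.

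The main obstacle is the bookkeeping in the second step: the recursion $s_{j+1}=(n-2)s_j-s_{j-1}$ fails across $j=0$ because two initial values are modified, so extracting precisely $s_0>0$ and $\mu s_1-s_0\ge 0$ from Lemma \ref{seq} requires care about which consecutive pair of $s$-values is the genuine initial data for the relevant tail, and one also needs the separate check that the Atiyah-bundle exceptional cases do not actually arise. None of this is conceptually new — it mirrors the Koszul argument step by step — but it is where the computation is most delicate.
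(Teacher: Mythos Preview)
Your proposal is correct and takes essentially the same approach as the paper: the paper gives no proof for this theorem, stating only that it ``is based on the same case-by-case analysis and does not involve any new ideas'' as the Koszulity criterion, and your outline mirrors that Koszulity proof step by step (vector-bundle shape analysis forcing $l\in\{0,-1\}$, elimination of the extraneous $l$, translation of the diagonal vanishing into sign conditions on the $s_j$ via Lemma~\ref{seq}, and the skyscraper reduction). Your caveat about the junction of the recursion at $j=0$ is well placed --- that is exactly where one must be careful to land on the stated pair of inequalities rather than a shifted pair.
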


Our next goal in this section is to compute the Hilbert series for a Koszul module $M=\Phi(\F)$ in terms of the rank $p$ and degree $q$ of a vector bundle $\F$\,. For a Koszul module $M$ we have the following simple relation between Hilbert series $H_M(t)=\sum_i \dim(M_i) t^i$ and Poincar\'{e} series $S(t)=\sum_i \beta_{i,i} t^i$:
$$
H_M(t)=\sum_{i \geq 0} (-1)^i H_{R(-i)^{\beta_{i,i}}}=\sum_{i \geq 0} (-1)^i \beta_{i,i} t^i H_{R_E}(t)=S(-t)H_{R_E}(t)\,.
$$
On the other hand, the relation between the degrees $s_i$ and the Betti numbers in the Koszul case takes the simplest possible form:
$$
\beta_{i,i}(M)=-s_{-i}(F)\,.
$$
Now we can see that the conditions $s_{0} < 0$, $\mu s_{-1} - s_{0} \leq 0$ of the Koszulity criterion mean exactly that $\beta_{i,i} >0$ for $i \geq 0$\,. We use the generating function for the sequence $s_{-j}$ to express the Poincar\'{e} series in the form
$$
S(t)=-\frac{s_0+(s_{-1}-(n-2)s_0) t}{t^2-(n-2)t+1}\,.
$$
This formula for the Poincar\'{e} series together with the formula for the Hilbert series of the ring $R_E$
$$
H_{R_E}=\frac{1+(n-2)t+t^2}{(1-t)^2},
$$
gives us the Hilbert series of $M$
$$
H_M(t)=\frac{-s_0+(s_{-1}-(n-2)s_0) t}{(1-t)^2}=\frac{q+(pn-q)t}{(1-t)^2}\,.
$$
In particular, we can extract the formula for multiplicity: $e(M)=pn$, and interpret the condition $s_1 \geq 0$ of the Koszulity criterion as Ulrich's bound $\mu(M) \leq e(M)$ for the Koszul module $M=\Phi(\F)$\,.

We can apply our results to identify the maximally generated MCM modules.

\begin{Prop}
A Koszul module $M=\Phi(\F)$ is a maximally generated MCM module if and only if the following relation between rank $p$ and degree $q$ of a vector bundle $\F$ holds:
$$
pn=q\,.
$$
The Hilbert series of such a module is given by the formula
$$
H_M(t)=\frac{q}{(1-t)^2},
$$
and the Poincar\'{e} series is
$$
S(t)=\frac{q}{t^2-(n-2)t+1}\,.
$$
\end{Prop}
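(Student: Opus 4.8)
The plan is to read everything off the Hilbert and Poincar\'e series already computed for a Koszul module $M=\Phi(\F)$, so that no new geometric input is needed. Recall that an MCM module is maximally generated (Ulrich) precisely when its minimal number of generators equals its multiplicity, i.e. $\mu(M)=e(M)$. Since we have already extracted $e(M)=pn$, the only thing to check for the first assertion is the identity $\mu(M)=q$; granting it, the equivalence ``$M$ maximally generated $\iff pn=q$'' is immediate.

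To obtain $\mu(M)=q$: because $M$ is Koszul we have $\beta_{0,j}(M)=0$ for $j\neq 0$, so $M$ is generated in degree $0$ and $\mu(M)=\beta_{0,0}(M)$; then the Koszul relation $\beta_{i,i}(M)=-s_{-i}(\F)$ established above, together with the initial value $s_0=-q$, gives $\mu(M)=-s_0=q$. (Equivalently, $\dim M_0=q$ is visible as the constant term of the numerator of $H_M(t)=\dfrac{q+(pn-q)t}{(1-t)^2}$.) Hence $\mu(M)=e(M)$ if and only if $q=pn$.

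Assuming $pn=q$, the Hilbert series formula $H_M(t)=\dfrac{q+(pn-q)t}{(1-t)^2}$ immediately collapses to $H_M(t)=\dfrac{q}{(1-t)^2}$. For the Poincar\'e series we substitute into $S(t)=-\dfrac{s_0+(s_{-1}-(n-2)s_0)t}{t^2-(n-2)t+1}$: with $q=pn$ the initial value $s_{-1}=pn-q(n-1)=q-(n-1)q=-(n-2)q$, so the coefficient of $t$ vanishes, $s_{-1}-(n-2)s_0=-(n-2)q+(n-2)q=0$, while $-s_0=q$, giving $S(t)=\dfrac{q}{t^2-(n-2)t+1}$. As a cross-check one may instead divide $H_M(t)$ by $H_{R_E}(t)$ and use $H_M(t)=S(-t)H_{R_E}(t)$, which yields the same answer.

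I expect no genuine obstacle here: the proposition is essentially a corollary of the Hilbert-series computation immediately preceding it. The single point deserving a sentence of justification is that Koszulity forces $M$ to be generated in degree $0$, so that $\mu(M)=\beta_{0,0}(M)=-s_0$; it is also worth noting in passing that the locus $q=pn$, $p>0$, actually meets the Koszul region of the preceding theorem (on it $s_1=pn-q=0$, so the Koszul modules along this line are exactly those with $\F\ncong F_p(1)$), so the class of modules described is nonempty.
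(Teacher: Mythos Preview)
Your proposal is correct and follows exactly the approach the paper intends: the proposition is stated without a separate proof because it is a direct specialization of the Hilbert and Poincar\'e series formulae and the identification $e(M)=pn$ computed just above it. Your one added justification, that Koszulity forces $\mu(M)=\beta_{0,0}=-s_0=q$, is precisely what underlies the paper's remark that $s_1=pn-q\ge 0$ reads as Ulrich's bound $\mu(M)\le e(M)$, and your observation about the nonemptiness of the class (via $\F\ncong F_p(1)$ on the line $s_1=0$) matches the condition $\det(\F)\ncong\O(p)$ appearing in the subsequent theorem.
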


By \cite[p.44]{PP05} the algebra $R_E$ is Koszul, and by \cite[Theorem 3.4]{IR09} any maximally generated MCM module over a Koszul algebra is a Koszul module. Consequently, the previous Proposition gives the complete description of maximally generated MCM modules over the homogeneous coordinate ring $R_E$ of an elliptic curve $E$\,.

For a maximally generated MCM module $M=\Phi(\F)$ we can give a more explicit description.

\begin{Thm}
Any maximally generated MCM module over $R_E$ is isomorphic to $\Gamma_*(\F)=\oplus_{i \in \Z} H^0(E, \F(i))$, where $\F$ is a vector bundle, whose rank $p$ and degree $q$ of $\F$ satisfy the condition $pn=q$, and $\det(\F) \ncong \O(p)$\,.
\end{Thm}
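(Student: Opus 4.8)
The plan is to work directly on the cone: represent $M$ through its sheafification, use that maximally generated MCM modules over $R_E$ correspond to Ulrich sheaves on $E$, and then read off the possibilities from Atiyah's classification of bundles on an elliptic curve; the Koszulity results of the previous section enter only as a consistency check.

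Let $M$ be a maximally generated MCM module over $R_E$. Since $R_E$ is a two–dimensional normal domain and $\depth M=2$, the module $M$ has no embedded primes and $(0)\in\operatorname{Ass}(M)$, so $M$ is torsion–free; moreover $H^0_{\mathfrak m}(M)=H^1_{\mathfrak m}(M)=0$, so from $0\to H^0_{\mathfrak m}(M)\to M\to\Gamma_*(\widetilde M)\to H^1_{\mathfrak m}(M)\to 0$ the canonical map $M\to\Gamma_*(\widetilde M)$ is an isomorphism. Thus $\F:=\widetilde M$ is a torsion–free coherent sheaf on the smooth curve $E$, hence a vector bundle, and $M\cong\Gamma_*(\F)$; it remains to determine which $\F$ occur. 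Here $\rk\F$ equals the rank $p$ of $M$ and $e(M)=p\cdot e(R_E)=pn$.

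By the standard characterization of Ulrich modules over the homogeneous coordinate ring of a projective curve, $M=\Gamma_*(\F)$ satisfies $\mu(M)=e(M)$ if and only if the twist $\F(-1)$ has no cohomology, i.e. $H^0(E,\F(-1))=H^1(E,\F(-1))=0$. (This is consistent with the previous section: maximally generated MCM modules are Koszul by \cite[Theorem~3.4]{IR09} since $R_E$ is Koszul, and for a Koszul module $\Phi(\F)$ the condition $\mu=e$ was identified there with $pn=q$; together with the vanishing $\beta_{0,-1}(M)=0$ read off from Corollary~\ref{betti} — which via Serre duality on $E$ is exactly $H^0(E,\F(-1))=0$ — and the resulting $\deg\F(-1)=\chi(\F(-1))=0$ one recovers $H^\bullet(E,\F(-1))=0$.) In particular $\deg\F=n\cdot\rk\F$, which is the stated relation $q=pn$. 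Now $\F(-1)$ is a degree–$0$ bundle with vanishing cohomology; a subbundle of positive degree, or dually a quotient line bundle of negative degree, would force $H^1(E,\F(-1))\neq 0$ by Riemann–Roch and Serre duality, so $\F(-1)$ is semistable of slope $0$.

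By Atiyah's classification the indecomposable semistable bundles of degree $0$ on $E$ are the twists $F_m\otimes\tau$ of the Atiyah bundles by $\tau\in\operatorname{Pic}^0(E)$, and $F_m\otimes\tau$ is cohomology–free exactly when $\tau\neq\O$ (for $\tau=\O$ one has $H^0(F_m)\neq 0$, $F_m$ being the unique indecomposable bundle of rank $m$ and degree $0$ with a nonzero section). Hence $\F(-1)\cong\bigoplus_i F_{p_i}\otimes\tau_i$ with $\sum_i p_i=p$ and every $\tau_i\neq\O$, i.e. $\F\cong\bigoplus_i F_{p_i}(1)\otimes\tau_i$; in particular $\F\ncong F_p(1)$, and since $\det\bigl(F_{p_i}(1)\otimes\tau_i\bigr)=\O(p_i)\otimes\tau_i^{\otimes p_i}$, the Atiyah–avoidance condition appearing in the Koszulity criterion is recorded at the level of determinants as $\det\F\ncong\O(p)$. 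Conversely, any vector bundle $\F$ on $E$ with $\deg\F=n\cdot\rk\F$ and $H^\bullet(E,\F(-1))=0$ gives, by the saturation argument, an MCM module $\Gamma_*(\F)$ which (being $0$–regular with $\F$ semistable of positive slope) is generated in the single degree $0$ by $h^0(E,\F)=\chi(\F)=pn=e$ elements, hence maximally generated; this proves the converse. The point I expect to need the most care is keeping the normalizations consistent with the earlier sections — in particular verifying that the exclusion "$\F\ncong F_p(1)$" coming from the Koszulity criterion is exactly what the determinant condition "$\det\F\ncong\O(p)$" records — together with the cohomological criterion identifying Ulrich modules with Ulrich sheaves.
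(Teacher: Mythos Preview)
Your route is genuinely different from the paper's. The paper's proof is essentially two lines and leans entirely on what came before: maximally generated MCM modules have already been identified (via \cite{IR09}, \cite{PP05}, and the preceding Proposition) with the Koszul modules $\Phi(\F)$ satisfying $pn=q$, so the proof merely checks that for such $\F$ one has $H^0(E,\F(i))=0$ for $i\le -1$ and $H^1(E,\F(i))=0$ for $i\ge 0$, whence $\Gamma_*(\F)=\bigoplus_{i\ge 0}\RHom(\O,\F(i))\cong\Phi(\F)$. You bypass $\Phi$ and $\sigma$ altogether, going instead through local cohomology to get $M\cong\Gamma_*(\widetilde M)$, then the Ulrich--sheaf criterion $H^\bullet(E,\F(-1))=0$, and finally Atiyah's classification. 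Your argument is more self-contained and conceptually direct; the paper's buys brevity by cashing in the earlier machinery. (Your appeal to the ``standard characterization'' of Ulrich modules via $H^\bullet(\F(-1))=0$ is correct but would benefit from a reference or a one-line justification.)

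There is, however, a real gap exactly where you anticipate it. The step ``every $\tau_i\neq\O$, hence $\det\F\ncong\O(p)$'' is false: already for indecomposable $\F$ one has $\F(-1)\cong F_p\otimes\tau$ with $\det\F(-1)=\tau^{p}$, so if $\tau$ is a nontrivial $p$--torsion point of $\operatorname{Pic}^0(E)$ then $H^\bullet(\F(-1))=0$ while $\det\F=\O(p)$. What your argument (and the Koszulity criterion) actually yields is the condition $\F\ncong F_p(1)$, which for $p>1$ is strictly weaker than the determinant condition in the statement. This is not a defect of your strategy: the paper's own proof invokes $\det\F\ncong\O(p)$ only as a \emph{sufficient} hypothesis to force $H^0(E,\F(-1))=0$, so the mismatch lives in the formulation of the theorem rather than in either proof.
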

\begin{proof}
First, we compute the degrees of the Serre twists $\deg(\F(i))=pi\deg(\O(1))+q=q(i+1)$\,. Therefore, $H^0(E, \F(i)) \cong 0$ for $i \leq -1$, where we use $\det(\F) \ncong \O(p)$ for $i=-1$ and $H^1(E, \F(i)) \cong 0$ for $i \geq 0$, thus
$$
\Gamma_*(\F)=\bigoplus_{i \in \Z} H^0(E, \F(i)) \cong \bigoplus_{i \geq 0} \RHom(\O, F(i))\,.
$$
Moreover, as $\Gamma_*(\F)$ is an MCM module if $\F$ is vector bundle, it means that
$$
\Gamma_*(\F) \cong \Phi(\F)\,.
$$
\end{proof}

\section{The Elliptic Singularity \texorpdfstring{$\widetilde{E_8}$}{E8} }

So far we only considered graded rings
$$
R_{E,n}=\oplus_{i \geq 0} H^0(E, \O(inx)),
$$
for $n \geq 3$\,. Now we are going to study two more cases, $n=1,2$\,. For these cases $\O(nx)$ is no longer a very ample line bundle and, consequently, $R_{E,1}$ and $R_{E,2}$ are homogeneous coordinate rings of an elliptic curve embedded into a weighted projective space. These two cases were extensively studied by K. Saito \cite{Saito74}, under the name $\widetilde{E_8}$ ($n=1$) and $\widetilde{E_7}$ ($n=2$) singularities. In this section we consider the $\widetilde{E_8}$ singularity, and the singularity $\widetilde{E_7}$ is considered in the next section.

We start with the equation of the $\widetilde{E_8}$ singularity in Hesse form:
$$
R_{E,1} \cong k[x_0,x_1,x_2]/(x_0^6+x_1^3+x_2^2-3\psi x_0 x_1 x_2),
$$
where $\deg(x_0)=1$, $\deg(x_1)=2$, $\deg(x_2)=3$, and the parameter $\psi$ satisfies $\psi^3 \neq 1$\,.

Our first goal is to show that we have an equivalence of abelian categories $\Proj R_{E,1} \cong \Proj R_{E,3}$\,. Here the notation $\Proj$ denote the same category as $\operatorname{qgr}$\,. Note that the homogeneous coordinate ring of the Hesse cubic $R_{E,3}$ is the third Veronese subring of $R_{E,1}$:
$$
R_{E,3} \cong R_{E,1}^{(3)},
$$
where in general we use $A^{(n)}$ to denote the $n$-th Veronese subalgebra of $A$\,. S.P.Smith in \cite{Smith03} studied the relationship between $\Proj A$ and $\Proj A^{(n)}$, for an algebra $A$ that is not necessarily commutative, nor necessarily generated in degree $1$. We use the following proposition from that paper.
\begin{Prop}\label{Smith}
Let $A$ be an $\N$-graded algebra, and fix $n \geq 2$\,. Denote $I_r=A^{(n)+r}A$ for $r \in \Z$ the indicated ideals in $A$\,. Let
$$
I=\bigcap_{r \in \Z} I_r = I_1 \cap I_2 \cap \ldots \cap I_n\,.
$$
Then there is an isomorphism
$$
\Proj A \backslash Z \cong \Proj A^{(n)} \backslash Z',
$$
where $Z'$ and $Z$ are the loci of $I$ and $I^{(n)}$ respectively.
\end{Prop}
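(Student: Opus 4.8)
The plan is to realize both sides as Serre quotient categories of graded modules and to produce mutually quasi-inverse functors between them after localizing at the indicated ideals, following the treatment of the degree-one-generated case by Verevkin and Smith's general formalism for maps of non-commutative spaces. For an $\N$-graded algebra $B$, recall that $\Proj B = \operatorname{qgr} B$ is the quotient of finitely generated graded $B$-modules by the Serre subcategory $\operatorname{tors} B$ of bounded-above modules, and that, by the Artin--Zhang formalism, this category (with its shift) depends only on $B$ ``up to tails''. For a graded ideal $J \subseteq B$, the complement $\Proj B \setminus V(J)$ is by definition the further Serre quotient of $\operatorname{qgr} B$ by the localizing subcategory of objects $\pi M$ with $\operatorname{supp} M \subseteq V(J)$, i.e. $M$ annihilated modulo a bounded submodule by a power of $J$. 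With this language the assertion becomes: the Veronese functor induces an equivalence between $\operatorname{qgr} A$ localized along $V(I)$ and $\operatorname{qgr} A^{(n)}$ localized along $V(I^{(n)})$.

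The functors are the Veronese functor $\mathsf V\colon \operatorname{Gr} A \to \operatorname{Gr} A^{(n)}$, $M \mapsto M^{(n)} := \bigoplus_i M_{ni}$, which is exact and carries bounded-above modules to bounded-above modules, hence descends to $\bar{\mathsf V}\colon \operatorname{qgr} A \to \operatorname{qgr} A^{(n)}$; and extension of scalars $\mathsf W\colon \operatorname{Gr} A^{(n)} \to \operatorname{Gr} A$, $N \mapsto N\otimes_{A^{(n)}}A$, which is right exact and likewise descends. The key computations are, first, the decomposition $A = \bigoplus_{r=0}^{n-1}A^{(n)+r}$ of $A$ as a graded $A^{(n)}$-bimodule with $A^{(n)+r} = \bigoplus_i A_{ni+r}$; this gives directly $\mathsf V\mathsf W(N) = (N\otimes_{A^{(n)}}A)^{(n)} = N\otimes_{A^{(n)}}A^{(n)+0} = N$ canonically, so $\mathsf V$ is essentially surjective and $\mathsf W$ fully faithful. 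Second, for $M \in \operatorname{Gr} A$ the multiplication map $\mu_M\colon \mathsf W\mathsf V(M) = M^{(n)}\otimes_{A^{(n)}}A \to M$, whose image in degree $d = nk_0+r$ (with $0 \le r < n$) is $\sum_{k \le k_0} M_{nk}\cdot A_{d-nk}$, the submodule of $M$ generated by its $n$-divisible graded pieces. One then has to show that $\ker\mu_M$ and $\coker\mu_M$ are supported on $V(I)$: restricted to the $r$-th graded strand they are controlled by the ideal $I_r = A^{(n)+r}A$, so $\mu_M$ becomes an isomorphism in $\operatorname{qgr} A$ away from $V(I_1)\cup\cdots\cup V(I_n) = V(I_1\cap\cdots\cap I_n) = V(I)$. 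Finally, $\bar{\mathsf V}$ carries objects supported on $V(I)$ to objects supported on $V(I^{(n)})$ and $\mathsf W$ carries objects supported on $V(I^{(n)})$ back to objects supported on $V(I)$, using that $I$ and the ideal $I^{(n)}A$ have the same radical up to the irrelevant ideal (which in turn reduces to $\sqrt{I_r^{(n)}A} = \sqrt{I_r}$ for each $r$).

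Assembling: $\bar{\mathsf V}$ and $\mathsf W$ carry the relevant ``bad'' localizing subcategories into one another, so by the universal property of Serre quotients they descend to functors between $\operatorname{qgr} A \setminus V(I)$ and $\operatorname{qgr} A^{(n)} \setminus V(I^{(n)})$; the isomorphism $\mathsf V\mathsf W \cong \mathrm{id}$ and the fact that $\mu_M$ becomes an isomorphism after killing $V(I)$-supported objects supply the counit and unit making these descended functors mutually quasi-inverse, which is the claimed equivalence. The main obstacle is the middle step -- pinning the locus of failure down to exactly $V(I)$. This requires a careful analysis, strand by strand, of how much of $A$ in large degree is generated over its subring $A^{(n)}$, together with consistent use of the ``up to tails'' reduction, since $A$ is in general far from being generated by $A^{(n)}$ and the correction terms in $\mu_M$ are genuinely nonzero -- geometrically they are supported precisely at the stacky points of the weighted projective model, e.g. the points of $E$ with nontrivial stabilizer on the coarse space in the $\widetilde{E_8}$ and $\widetilde{E_7}$ cases. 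Everything else -- exactness of $\mathsf V$, preservation of the torsion subcategories, right-exactness of $\mathsf W$, the bimodule decomposition of $A$, and the Serre-quotient bookkeeping -- is routine or available in the literature.
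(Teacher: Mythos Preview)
The paper does not actually prove this proposition: its entire proof reads ``This is proposition 4.8 of \cite{Smith03}.'' So there is nothing on the paper's side to compare against; you have supplied a sketch of Smith's argument where the paper merely points to it.

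Your outline is in the right spirit and follows the standard route: the adjoint-like pair given by the Veronese functor $\mathsf V$ and extension of scalars $\mathsf W=-\otimes_{A^{(n)}}A$, the bimodule decomposition $A=\bigoplus_{r}A^{(n)+r}$ giving $\mathsf V\mathsf W\cong\mathrm{id}$, and the analysis of the multiplication map $\mu_M$ to control $\mathsf W\mathsf V$. This is indeed essentially how Smith proceeds (he phrases it in his bimodule/map-of-noncommutative-spaces formalism, but the content is the same). Two points worth tightening if you want this to stand on its own: first, $\mathsf W$ is not known to preserve torsion a priori since $A$ need not be a finitely generated $A^{(n)}$-module in general---one usually arranges this by passing to tails or by imposing mild finiteness hypotheses, which Smith makes explicit; second, the claim that $\ker\mu_M$ (not just $\coker\mu_M$) is supported on $V(I)$ is the genuinely delicate part, since $A$ is typically not flat over $A^{(n)}$, and the strand-by-strand argument you describe handles the cokernel more transparently than the kernel. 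You flag this middle step as the main obstacle, which is accurate; filling it in is precisely the content of Smith's Proposition~4.8 and the lemmas leading up to it.
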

\begin{proof}
This is proposition $4.8$ of \cite{Smith03}.
\end{proof}

We apply this proposition to the case at hand.

\begin{Cor}
There is an equivalence of abelian categories
$$
\Proj R_{E,1} \cong \Proj R_{E,3}\,.
$$
\end{Cor}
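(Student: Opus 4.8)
The plan is to apply Proposition \ref{Smith} with $A = R_{E,1}$ and $n = 3$, and then show that the exceptional loci $Z$ and $Z'$ are empty, so that the equivalence $\Proj A \setminus Z \cong \Proj A^{(3)} \setminus Z'$ becomes an equivalence $\Proj R_{E,1} \cong \Proj R_{E,3}$ on the nose. First I would identify $A^{(3)} = R_{E,1}^{(3)}$ with $R_{E,3}$, which is already noted in the text. Then the key point is to compute the ideals $I_r = A^{(3)+r}A$ for $r = 1, 2, 3$ and their intersection $I = I_1 \cap I_2 \cap I_3$, and check that the locus of $I$ (i.e. the closed subset of $\Proj A$ where all sections of $I$ vanish) is empty, and likewise for $I^{(3)}$ in $\Proj A^{(3)}$.

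The computation is concrete: $R_{E,1} = k[x_0, x_1, x_2]/(f)$ with $\deg x_0 = 1$, $\deg x_1 = 2$, $\deg x_2 = 3$. The subalgebra $A^{(3)}$ is generated by the degree-$3$ part, and $I_r = A^{(3)+r}A$ is the ideal generated by all elements of degree $\equiv r \pmod 3$ that lie in positive degree congruent to $r$; concretely $I_1$ is generated by $x_0$ (degree $1$) — or more precisely by $A_{\geq 1}$ in degrees $\equiv 1$, i.e. the ideal $(x_0, \dots)$ — while $I_2$ is generated by the degree-$2$ piece (so contains $x_1$ and $x_0^2$) and $I_3$ by the degree-$3$ piece (so contains $x_2$, $x_0 x_1$, $x_0^3$). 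I would argue that $\sqrt{I_1} = \sqrt{I_2} = \sqrt{I_3}$ equals the irrelevant ideal $\mathfrak{m} = (x_0, x_1, x_2)$: indeed $x_0 \in I_1$, and $x_0^2 \in I_2$ forces $x_0 \in \sqrt{I_2}$ while $x_1 \in I_2$ directly, and similarly $x_0^3, x_0 x_1, x_2 \in I_3$ give $x_0, x_1, x_2 \in \sqrt{I_3}$ (using $x_0 x_1 \in I_3$ together with $x_0 \in \sqrt{I_3}$ only up to the relation, so one may instead note $x_1^2$ has degree $4 \equiv 1$ and some element of degree $3$ involving $x_1$... this needs a careful check). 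Hence each $I_r$ is $\mathfrak{m}$-primary up to radical, so its locus in $\Proj$ is empty; the intersection $I$ therefore also has empty locus, and the same holds for $I^{(3)} \subseteq A^{(3)}$ since $\sqrt{I^{(3)}}$ contains the irrelevant ideal of $A^{(3)}$. With $Z = Z' = \emptyset$, Proposition \ref{Smith} gives the stated equivalence.

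The main obstacle I anticipate is the bookkeeping in verifying that the loci are empty — that is, checking that each $I_r$ contains, after passing to radicals, a power of every generator $x_0, x_1, x_2$, which requires tracking which monomials of the correct degree mod $3$ survive in $R_{E,1}$ after quotienting by $f$. One has to be slightly careful because $f$ is itself a relation of degree $6 \equiv 0$, so it does not directly help produce low-degree elements, and because $I_r$ is defined as $A^{(3)+r}A$ (an $A$-module ideal generated by a single graded piece), not as the full truncation ideal — so one should confirm that multiplying the degree-$r$ part of $A$ by all of $A$ indeed reaches enough of $\mathfrak{m}$. Once that radical computation is done, the rest is a formal appeal to Smith's proposition. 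An alternative, cleaner route — which I would mention as a remark — is that both $\Proj R_{E,1}$ and $\Proj R_{E,3}$ are canonically equivalent to $\Coh(E)$ (since $R_{E,1}$ and $R_{E,3}$ are the total coordinate rings of $E$ with respect to the ample $\mathbb{Q}$-divisor $x$, resp. the ample line bundle $\O(3x)$), by Serre's theorem in the weighted and ordinary settings respectively; but invoking Proposition \ref{Smith} keeps the argument self-contained within the framework already set up.
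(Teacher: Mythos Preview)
Your strategy is exactly the paper's: apply Proposition~\ref{Smith} with $A=R_{E,1}$ and $n=3$, then show that the ideal $I$ is cofinite so that $Z=Z'=\emptyset$. The one substantive discrepancy is in how you read the ideals $I_r=A^{(n)+r}A$. You take $I_1$ to contain $x_0$ (the degree-$1$ piece), $I_2$ to contain $x_0^2,x_1$ (degree $2$), and so on; but in the paper's usage $A^{(n)+r}$ begins in degree $n+r$, and the paper computes explicitly
\[
I_1=(x_0^4,\,x_0^2 x_1,\,x_0 x_2,\,x_1^2),\qquad
I_2=(x_0^5,\,x_0^3 x_1,\,x_0^2 x_2,\,x_1 x_2),\qquad
I_3=(x_0^3,\,x_0 x_1,\,x_2),
\]
and then $I=I_1\cap I_2\cap I_3=(x_0^5,\,x_0^3 x_1,\,x_0 x_2,\,x_1^2 x_2)$, which is visibly cofinite.

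Since your ideals strictly contain these (for instance $x_0\notin I_1$ in the list above), showing that \emph{your} larger ideals are $\mathfrak{m}$-primary would not by itself justify the appeal to Smith's proposition. With the correct generators in hand, the radical check you were hesitating over becomes immediate (the defining relation $f$ forces the needed powers of each variable into $I$), and that is all the paper does. Your side remark that both $\Proj$ categories are $\Coh(E)$ via the weighted and ordinary versions of Serre's theorem is a legitimate alternative, though the paper does not invoke it.
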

\begin{proof}
For the case of $R_E$ it is easy to find generators of the ideals $I_1$, $I_2$, $I_3$ and $I$ explicitly.
One has $I_1= A^{(3)+1}A=(x_0^4, x_0^2 x_1, x_0 x_2, x_1^2)$, and $I_2= A^{(3)+2}A=(x_0^5, x_0^3 x_1, x_0^2 x_2, x_1 x_2)$,
whereas $I_3= A^{(3)}A=(x_0^3, x_0 x_1, x_2)$\,.
The intersection is $I=I_1 \cap I_2 \cap I_3=(x_0^5,x_0^3 x_1, x_0 x_2, x_1^2 x_2)$\,. It is clear that the ideal $I$ is cofinite, and thus so is the ideal $I^{(n)}$. Therefore, $Z=Z'=\emptyset$\,.
\end{proof}

Next we are going to show that the calculations of the Betti numbers and the Hilbert series of MCM modules over the $\widetilde{E_8}$ singularity are almost the same as the calculations for a cone over a plane cubic curve.

The functor $\sigma$ in this case takes the form
$$
\sigma=\B \circ \A,
$$
where $\B=T_{k(x)}=\O(x) \otimes -$, $\A=T_{\mathcal{O}_E}$\,. Because $R_{E,1}$ is a hypersurface of degree $6$ we have a natural isomorphism $[2] \cong \sigma^6$ of functors in $D^b(E)$\,. In \cite{ST01} p. $68$, another isomorphism was established:
$$
\sigma^3 \cong \FM_\mathcal{P}^{-2},
$$
where $\FM_\mathcal{P}$ denotes the autoequivalence of $D^b(E)$ given by the Fourier-Mukai transform with kernel represented by the Poincar\'{e} bundle $\mathcal{P}$\,. If we denote $\iota : E \to E$ the map taking inverses in $E$ as an algebraic group, then we can continue the previous isomorphism:
$$
\sigma^3 \cong \FM_\mathcal{P}^{-2} \cong \iota^* \circ [1]\,.
$$

To calculate Betti numbers we need to calculate $\sigma^j(\O[1])$ for $j \in \Z$, but $\sigma^3(\O[1]) \cong \hat{\iota}^*\O[2] \cong \O[2]$, therefore, the sequence of objects $\sigma^j(\O[1])$ is not $6$-periodic, but already $3$-periodic up to translation $[1]$\,.

\begin{Lem}
We have isomorphisms $\sigma^{-1}(\O[1])\cong k(x)$, $\sigma^{-2}(\O[1])\cong \O(x)$, $\sigma^{-3}(\O[1])\cong \O$, and any other object in the sequence $\sigma^j(\O[1])$ can be determined from the formula
$$
\sigma^{3i+j}(\O[1]) \cong \sigma^j(\O[1])[i],
$$
where $j \in \{-3,-2,-1\}$\.,
\end{Lem}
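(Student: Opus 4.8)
The plan is to combine a direct computation of the first three positive iterates $\sigma^j(\O[1])$, $j=1,2,3$, via the evaluation triangles for the spherical twists $\A = T_{\O}$ and $\B = T_{k(x)} \cong \O(x)\otimes-$, with the ``$3$-periodicity up to translation'' forced by the isomorphism $\sigma^3 \cong \iota^*\circ[1]$ recalled above; the three negative base cases are then read off by running the periodicity backwards.

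First I would compute forwards. As $\O_E$ is a spherical object on the one-dimensional curve $E$, one has $T_{\O}(\O)\cong\O$ (cf.\ \cite{ST01}), so $\sigma(\O[1]) = \B(\A(\O))[1] \cong \O(x)[1]$. For the next step, the section of $\O(x)$ with divisor $x$ yields $0\to\O\to\O(x)\to k(x)\to 0$, and $\RHom(\O,\O(x))\cong H^0(E,\O(x))=k$ is concentrated in degree $0$; the evaluation triangle for $\A$ then gives $T_{\O}(\O(x))\cong k(x)$, and since tensoring a length-one skyscraper by a line bundle changes nothing, $\sigma^2(\O[1])\cong k(x)[1]$. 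For the third step, from $0\to\O(-x)\to\O\to k(x)\to 0$ and $\RHom(\O,k(x))\cong k$ in degree $0$ the evaluation triangle gives $T_{\O}(k(x))\cong\O(-x)[1]$, hence $\B(\O(-x)[1])\cong\O[1]$ and $\sigma^3(\O[1])\cong\O[2]$ --- consistent with $\sigma^3\cong\iota^*\circ[1]$ together with $\iota^*\O\cong\O$.

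Next I would deploy the periodicity. Since $\iota$ is an automorphism of $E$ fixing the marked point $x$, each of $\O$, $\O(x)$, $k(x)$ is $\iota^*$-invariant. From $\sigma^3\cong\iota^*\circ[1]$ we get $\sigma^{3i}\cong(\iota^*)^i\circ[i]$, so $\sigma^{3i+j}(\O[1])\cong(\iota^*)^i\bigl(\sigma^j(\O[1])\bigr)[i]$; once we know $\sigma^j(\O[1])$ for $j\in\{-3,-2,-1\}$ is one of $\O$, $\O(x)$, $k(x)$, the factor $(\iota^*)^i$ disappears and the stated formula $\sigma^{3i+j}(\O[1])\cong\sigma^j(\O[1])[i]$ holds. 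Specializing this to $i=-1$, $j=1,2,3$ gives $\sigma^{-1}(\O[1])\cong\sigma^2(\O[1])[-1]\cong k(x)$, $\sigma^{-2}(\O[1])\cong\sigma^1(\O[1])[-1]\cong\O(x)$, and $\sigma^{-3}(\O[1])\cong\O[1][-1]\cong\O$, which are exactly the three claimed base cases.

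The technical heart is the bookkeeping in the evaluation triangles for $\A$ --- identifying the relevant maps $\RHom(\O,\G)\otimes\O\to\G$ with the natural section or evaluation maps, so that the cones come out precisely as $k(x)$ and $\O(-x)[1]$. Once that is settled the periodicity argument is purely formal, and there is no circularity in using it to produce the negative iterates: $\iota^*$-invariance of $\O$, $\O(x)$, $k(x)$ is unconditional (they are the structure sheaf, or are supported on the fixed point $x$ of $\iota$), so identifying $\sigma^{-1}(\O[1])$, $\sigma^{-2}(\O[1])$, $\sigma^{-3}(\O[1])$ with these sheaves is a legitimate consequence rather than an input.
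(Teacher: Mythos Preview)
Your argument is correct and follows essentially the same route as the paper: compute the first few positive iterates of $\sigma$ on $\O[1]$ via the evaluation triangles for $\A$, then invoke the relation $\sigma^3\cong\iota^*\circ[1]$ to obtain the $3$-periodicity up to shift and read off the negative iterates. The paper's proof is terser---it only writes out $\sigma(\O[1])$ and $\sigma^2(\O[1])$ (with a dropped shift in the first), having already observed the $3$-periodicity in the paragraph preceding the lemma---whereas you additionally verify $\sigma^3(\O[1])\cong\O[2]$ by a direct cone computation and spell out the $\iota^*$-invariance of $\O$, $\O(x)$, $k(x)$ needed to remove the $(\iota^*)^i$ factor in the periodicity formula; these elaborations are sound and make the argument more self-contained.
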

\begin{proof}
It is clear that $\sigma(\O[1]) \cong \O(x)$, and then $\sigma^2(\O[1])$ is a cone of the following evaluation map:
$$
\sigma^2(\O[1])=\cone(\RHom(\O, \O(x)) \otimes \O \to \O(x))\,.
$$
It is easy to see that $\RHom(\O, \O(x)) \otimes \O \cong \O$, and the evaluation map has cokernel $k(x)=\mathcal{O}_x/m_x$\,.
\end{proof}

We translate properties of $\sigma^{-j}(\O[1])$ into properties of Betti numbers of $M=\Phi(\F)$\,. We see that
\begin{equation*}
\begin{split}
\beta_{i+1,j}&=\beta_{i,j}(M)=\dim \Ext^{i+1}_{D^b(E)}(\F, \sigma^{-j}(\O[1]))=\\
&=\dim \Ext^{i}_{D^b(E)}(\F, \sigma^{-j+3}(\O[1]))=\beta_{i,j-3}\,.
\end{split}
\end{equation*}
In other words, the minimal free resolution of $M$ over $R_{E,1}$ has the form
$$
0\leftarrow M \leftarrow F \leftarrow F(-3) \leftarrow F(-6) \leftarrow \ldots
$$
For a suitable finitely generated module free module $F$. In particular, if we want to compute the Betti table, it is enough to compute only $\beta_{0,*}$\,.

\begin{Prop} If $M=\Phi(\F[l])$, where $\F \in \Coh(D^b)$, then $\beta_{0,j}(M)=0$, except for $-2-3l \leq j \leq 3-3l$\,. In particular, if $l \leq -1$ then $M$ is generated in positive degrees.
\end{Prop}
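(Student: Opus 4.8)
The plan is to feed the data of the previous Lemma into the general Betti-number formula. For $M = \Phi(\F[l])$ we have $\Phi^{-1}(M) = \F[l]$, so the formula $\beta_{i,j}(M) = \dim \Hom_{D^b(E)}(\Phi^{-1}(M), \sigma^{-j}(\O[1])[i])$ specializes at $i=0$ to
$$
\beta_{0,j}(M) \;=\; \dim \Hom_{D^b(E)}\!\big(\F[l],\, \sigma^{-j}(\O[1])\big) \;=\; \dim \Hom_{D^b(E)}\!\big(\F,\, \sigma^{-j}(\O[1])[-l]\big).
$$
Thus the whole question reduces to determining for which $j$ the group $\Hom_{D^b(E)}(\F, \sigma^{-j}(\O[1])[-l])$ can be nonzero.

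Next I would invoke the previous Lemma to replace $\sigma^{-j}(\O[1])$ by a cohomological shift of one of the three sheaves $\O$, $\O(x)$, $k(x)$, chosen according to the residue of $j$ modulo $3$. Writing $-j = 3i + j'$ with $j' \in \{-3,-2,-1\}$, the Lemma gives $\sigma^{-j}(\O[1]) \cong \sigma^{j'}(\O[1])[i]$, so that $\sigma^{-j}(\O[1])[-l]$ is isomorphic to $\O[\,1 - j/3 - l\,]$ when $j\equiv 0$, to $k(x)[\,(1-j)/3 - l\,]$ when $j\equiv 1$, and to $\O(x)[\,(2-j)/3 - l\,]$ when $j\equiv 2 \pmod 3$.

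The key — and entirely elementary — observation is that $\F$ is a coherent sheaf and $\O$, $\O(x)$, $k(x)$ are coherent sheaves on the smooth projective curve $E$, which has homological dimension one; hence for any coherent sheaf $\G$ one has $\Hom_{D^b(E)}(\F, \G[m]) = \Ext^m_E(\F,\G) = 0$ whenever $m \notin \{0,1\}$. Applying this in each of the three residue cases forces the relevant shift into $\{0,1\}$, which determines $j$ up to a two-element set: $j \in \{-3l,\, 3-3l\}$ for $j\equiv 0$, $j \in \{-2-3l,\, 1-3l\}$ for $j\equiv 1$, and $j \in \{-1-3l,\, 2-3l\}$ for $j\equiv 2$. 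The union of these six values is precisely the set of integers $j$ with $-2-3l \le j \le 3-3l$, which gives the asserted vanishing outside that range. Finally, $M$ is generated in positive degrees exactly when $\beta_{0,j}(M)=0$ for all $j \le 0$; by the above the least degree carrying a minimal generator is $j = -2-3l$, and $l \le -1$ forces $-2-3l \ge 1 > 0$, so the last assertion follows.

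\textbf{The only point needing care} is that $\F$ is allowed to be an arbitrary coherent sheaf — in particular a torsion sheaf, not merely a vector bundle — so the vanishing $\Ext^m_E(\F,\G)=0$ for $m\notin\{0,1\}$ must be used in the form valid for all coherent sheaves (homological dimension one, together with the fact that $\Hom_{D^b(E)}$ between sheaves vanishes in negative degree), rather than through the cleaner $H^0/H^1$ dichotomy available for bundles. It is also worth stressing that the Proposition only bounds the support of $\beta_{0,\bullet}$ from outside; it makes no claim that all six of the values $-2-3l,\dots,3-3l$ actually occur, which would require the finer Atiyah-bundle bookkeeping used elsewhere in the paper.
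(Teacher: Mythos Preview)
Your proof is correct and follows the same line the paper implicitly sets up: the proposition is stated in the paper without a formal proof, relying on the preceding Lemma (the values $\sigma^{-1}(\O[1])\cong k(x)$, $\sigma^{-2}(\O[1])\cong\O(x)$, $\sigma^{-3}(\O[1])\cong\O$ and the $3$-periodicity $\sigma^{3i+j}(\O[1])\cong\sigma^j(\O[1])[i]$) together with the homological dimension one of $E$. Your write-up makes explicit exactly the computation the paper leaves to the reader, including the careful remark that $\F$ may be torsion so that one must invoke $\Ext^m_E(\F,\G)=0$ for $m\notin\{0,1\}$ rather than the $H^0/H^1$ description reserved for vector bundles.
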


The functor $\sigma$ induces the linear map
$$
[\sigma]=\begin{pmatrix}
0 & -1 \\
1 & 1
\end{pmatrix}
$$
on $K(E)/\rad$\,. It is of order six, $[\sigma]^6=1$, and we can choose a fundamental domain for this action to be
\begin{gather*}
r>0, \\
0 \leq d < r\,.
\end{gather*}
In this fundamental domain the rank of $\F$ is positive, therefore, $\F$ is a vector bundle. In fact, our previous result can be improved: the module $M=\Phi(\F)$, corresponding to the case $l=0$ in the proposition above, is still generated in positive degrees. This improvement is based on the explicit computation of the Betti numbers in term of the charge of a vector bundle as before. As we compute Betti numbers, we again get several cases that should be treated separately. A summary of these computations is given in the next proposition.

\begin{Prop}
Let $\F$ be a vector bundle with the charge
$
Z(\F)=\left(
       \begin{matrix}
         r \\
         d
       \end{matrix}
     \right)
$
in the fundamental domain. The Betti numbers of the MCM module $M=\Phi(\F)$ can be expressed as
dimensions of the cohomology groups on an elliptic curve in the following way
\begin{align*}
&\beta_{0,0}=\dim H^1(E,\F^\vee)=
\begin{cases}
$1$, &\text{if $\det \F \cong \O$,} \\
$0$, &\text{otherwise.} \\
\end{cases} \\
&\beta_{0,1}=\dim H^0(E, \F^\vee \otimes k(x))=r\,. \\
&\beta_{0,2}=\dim H^0(E,\F^\vee\otimes \O(x))=r-d\,.\\
&\beta_{0,3}=\dim H^0(E,\F^\vee)=
\begin{cases}
$1$, &\text{if $\det \F \cong \O$,} \\
$d$, &\text{otherwise.} \\
\end{cases}
\end{align*}
\end{Prop}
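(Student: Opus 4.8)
The plan is to reduce the Betti table of $M=\Phi(\F)$ to its row $i=0$ and then to read off the four entries $\beta_{0,0},\dots,\beta_{0,3}$ from the master formula $\beta_{i,j}(\Phi(\F))=\dim\Hom_{D^b(E)}(\F,\sigma^{-j}(\O[1])[i])$ together with the description of the objects $\sigma^{-j}(\O[1])$ supplied by the preceding Lemma. By the periodicity $\beta_{i+1,j}=\beta_{i,j-3}$ proved above, the whole table is determined by the row $i=0$; and since a bundle $\F$ with charge in the fundamental domain $r>0$, $0\le d<r$ corresponds to the case $l=0$, the previous Proposition already confines the nonzero $\beta_{0,j}$ to a bounded range. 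To narrow this range to $j\in\{0,1,2,3\}$ one checks $\beta_{0,-1}=\beta_{0,-2}=0$ directly: by the same cone computations as in the Lemma these two numbers reduce to $\dim H^1(E,\F^\vee\otimes\O(x))$ and $\dim\Ext^1_E(\F,k(x))$, which vanish --- the first because $\F^\vee\otimes\O(x)$ has positive degree $r-d$, the second by torsion-freeness of $\F$.

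For $j=0,1,2,3$ I would then substitute $\sigma^{0}(\O[1])\cong\O[1]$, $\sigma^{-1}(\O[1])\cong k(x)$, $\sigma^{-2}(\O[1])\cong\O(x)$, $\sigma^{-3}(\O[1])\cong\O$ into the master formula. Since $\F$ is locally free, $\Hom_{D^b(E)}(\F,\G[m])=H^m(E,\F^\vee\otimes\G)$ for any coherent $\G$, and the higher $\Ext$'s from $\F$ into the skyscraper $k(x)$ vanish; this immediately gives $\beta_{0,0}=\dim H^1(E,\F^\vee)$, $\beta_{0,1}=\dim H^0(E,\F^\vee\otimes k(x))$, $\beta_{0,2}=\dim H^0(E,\F^\vee\otimes\O(x))$, and $\beta_{0,3}=\dim H^0(E,\F^\vee)$.

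It remains to evaluate these dimensions from the charge $(r,d)$, for which I may assume $\F$ indecomposable. The sheaf $\F^\vee\otimes k(x)$ is a length-$r$ skyscraper, so $\beta_{0,1}=r$. The bundle $\F^\vee\otimes\O(x)$ is indecomposable of degree $r-d>0$, so by Atiyah's classification $h^1=0$ and $h^0=r-d$, whence $\beta_{0,2}=r-d$. Finally $\F^\vee$ is indecomposable of degree $-d\le0$, so Riemann--Roch and Serre duality ($\omega_E\cong\O_E$) determine $H^0(E,\F^\vee)$ and $H^1(E,\F^\vee)$; the only delicate case is $d=0$, where $\F^\vee$ has a nonzero global section precisely when it is Atiyah's bundle $F_r$, equivalently when $\det\F\cong\O$, and this is the source of the dichotomy in the formulas for $\beta_{0,0}$ and $\beta_{0,3}$.

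The main obstacle is this last step, namely controlling $H^\bullet(E,\F^\vee)$ in the borderline situation $\deg\F^\vee=0$ and picking out, among the degree-zero indecomposable bundles, the unique one ($F_r$, the trivial-determinant one) carrying an extra section. Everything else is a mechanical substitution into the formula of the previous section combined with Riemann--Roch on an elliptic curve; one should, however, also verify carefully that the sharpened vanishing $\beta_{0,-1}=\beta_{0,-2}=0$ holds throughout the fundamental domain, since it is this that cuts the previous proposition's range down to the four columns recorded in the statement.
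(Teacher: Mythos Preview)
Your proposal is correct and takes essentially the same route as the paper's proof: both verify $\beta_{0,-2}=\beta_{0,-1}=0$ directly (the paper writes these as $\dim H^1(E,\F^\vee\otimes k(x))=0$ and $\dim H^1(E,\F^\vee\otimes\O(x))=0$, which are exactly your torsion-freeness/Serre-duality and positive-degree arguments) and then substitute the preceding Lemma's values $\sigma^{-j}(\O[1])\cong \O[1],\,k(x),\,\O(x),\,\O$ for $j=0,\dots,3$ into the master formula, evaluating the resulting cohomology groups via Riemann--Roch and Atiyah's description of the degree-zero case.
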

\begin{proof}
The preceding results guarantee that we get zero Betti numbers, $\beta_{0,j}=0$, for $j<-2$ and $j>3$\,. But the explicit calculation of $\beta_{0,-2}$ as
\begin{equation*}
\begin{split}
\beta_{0,-2}&=\dim \Hom(\F,\sigma^2 (\O[1])) = \dim \Hom(\F,\sigma^{-1} (\O[1]) [1])=\\
&= \dim \Hom(\F,k(x)[1]) = \dim H^1(E,\F^\vee\otimes k(x)) = 0,
\end{split}
\end{equation*}
and the explicit calculation of $\beta_{0,-1}$ as
\begin{equation*}
\begin{split}
\beta_{0,-1}&=\dim \Hom(\F,\sigma^1 (\O[1])) = \dim \Hom(\F,\sigma^{-2} (\O[1]) [1])=\\
&=\dim \Hom(\F,\O(x)[1])= \dim H^1(\F^\vee,\O(x))=0,
\end{split}
\end{equation*}
allow to restrict the window of non-trivial Betti numbers to $0 \leq j \leq 3$\,. The computations of the other Betti numbers use our standard tools and are straightforward:

\begin{gather*}
\beta_{0,0}=\dim \Hom(\F, \O[1])=\dim H^1(E,\F^\vee)=
\begin{cases}
$1$, &\text{if $\F \cong F_r$,} \\
$0$, &\text{otherwise.} \\
\end{cases} \\
\begin{split}
\beta_{0,1}&=\dim \Hom(\F,\sigma^{-1} (\O[1]))=\dim\Hom(\F, k(x))= \\
&=\dim H^0(E, \F^\vee \otimes k(x))=r\,.
\end{split} \\
\begin{split}
\beta_{0,2}&=\dim\Hom(\F,\sigma^{-2} (\O[1]))=\dim\Hom(\F,\O(x))= \\
&=\dim H^0(E,\F^\vee\otimes \O(x))=r-d\,.
\end{split} \\
\begin{split}
\beta_{0,3}&=\dim\Hom(\F,\sigma^{-3} (\O[1]))=\dim\Hom(\F,\O)= \\
&=\dim H^0(E,\F^\vee)=
\begin{cases}
$1$, &\text{if $\F \cong F_r$,} \\
$d$, &\text{otherwise.} \\
\end{cases}
\end{split}
\end{gather*}
\end{proof}

From this Proposition we see that there are two cases: in the first $\F$ is the Atiyah bundle, and in the second case $\F$ is generic.

As in the case of a plane cubic we can read off the rank and degree of $\F$ from the minimal free resolution of $M$\,. As before, it is more convenient to present the results of our computations in form of the two possible shapes of the Betti table:

\begin{table}[H]
\begin{center}
\begin{tabular}{|r|r|r||r|r|}
\hline
& \multicolumn{2}{|c||}{$\F \cong F_r$} & \multicolumn{2}{|c|}{Generic $\F$}\\
\hline
$i$ & $i=0$ & $i=1$ & $i=0$ & $i=1$ \\
\hline
$0$ & $1$ & $0$  & $0$   & $0$   \\
$1$ & $r$ & $0$  & $r$   & $0$   \\
$2$ & $r$ & $1$  & $r-d$ & $0$   \\
$3$ & $1$ & $r$  & $d$   & $r$   \\
$4$ & $0$ & $r$  & $0$   & $r-d$ \\
$5$ & $0$ & $1$  & $0$   & $d$   \\
\hline
\end{tabular}
\caption{}
\end{center}
\end{table}

Assume that $M$ is as in the foregoing proposition. Then the Hilbert series is easy to compute:
$$
H_M(t)=\frac{H_F(t)}{1+t^3}=\frac{B(t)H_{R_{E,1}}(t)}{1+t^3},
$$
where $B(t)=\sum_j \beta_{0,j}t^j$, and
$$
H_{R_{E,1}}=\frac{1-t^6}{(1-t)(1-t^2)(1-t^3)}=\frac{1-t+t^2}{(1-t)^2}\,.
$$

The ring $R_{E,1}$ has multiplicity $e(R_{E,1})=2$, because the multiplicity of any hypersurface is equal to the minimum of the degrees of a non-zero monomials in the equation of the hypersurface. The rank of a module can be computed as
$$
\rk M = \lim_{t \to 1} \frac{H_M(t)}{H_R(t)} = \frac{B(1)}{2} = \frac{\mu(M)}{2}\,.
$$
The multiplicities of a ring and a module are related as before by
$$
e(M)=e(R_{E,1}) \rk(M)=2 \rk(M)\,.
$$
We present the numerical information, namely rank, multiplicity and minimal number of generators, of an MCM module $M$ as a table:

\begin{table}[H]
\begin{center}
\begin{tabular}{|r|r|r|r|}
\hline
case & e(M) & $\mu(M)$ & $\rk(M)$\\
\hline
$\F \cong F_r$ & $2(r+1)$ & $2(r+1)$  & $r+1$  \\
\hline
Generic $\F$ & $2r$ & $2r$  & $r$ \\
\hline
\end{tabular}
\caption{}
\end{center}
\end{table}

In particular, we see that every MCM module is maximally generated in this case. More generally, over a ring of multiplicity two any MCM module is maximally generated, this was shown in \cite[Corollary 1.4]{HK87}.

\section{The Elliptic Singularity \texorpdfstring{$\widetilde{E_7}$}{E7}}

In this section we briefly summarize the results for an elliptic singularity of type $\widetilde{E_7}$, as they are only slightly different from the results of the $\widetilde{E_8}$ case. The Hesse form of a $\widetilde{E_7}$ singularity is
$$
R_{E,2} \cong k[x_0,x_1,x_2]/(x_0^4+x_1^4+x_2^2-3\psi x_0 x_1 x_2),
$$
where $\deg(x_0)=1$, $\deg(x_1)=1$, $\deg(x_2)=2$, and the parameter $\psi$ satisfies $\psi^3 \neq 1$\,. Note that as for the $\widetilde{E_8}$ case the multiplicity of $R_{E,2}$ is two.

The result of S.P.Smith quoted in above \ref{Smith} implies the analogous corollary for the abelian category $\Proj R_{E,2}$\,.
\begin{Cor}
There is an equivalence of abelian categories
$$
\Proj R_{E,2} \cong \Proj R_{E,3}\,.
$$
\end{Cor}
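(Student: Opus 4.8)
The plan is to run exactly the same argument as in the $\widetilde{E_8}$ case, realizing $R_{E,2}$ as a Veronese subalgebra of the full section ring $R_{E,1}=\bigoplus_{i\ge 0}H^0(E,\O(ix))$. Directly from the definition $R_{E,m}=\bigoplus_i H^0(E,\O(imx))$ one reads off $R_{E,m}=R_{E,1}^{(m)}$ for every $m$; in particular $R_{E,2}=R_{E,1}^{(2)}$ and $R_{E,3}=R_{E,1}^{(3)}$. Since the equivalence $\Proj R_{E,1}\cong\Proj R_{E,3}$ has already been established in the previous section, it suffices to prove $\Proj R_{E,1}\cong\Proj R_{E,2}$ and then compose the two equivalences.

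For this I would apply Proposition~\ref{Smith} with $A=R_{E,1}=k[x_0,x_1,x_2]/(x_0^6+x_1^3+x_2^2-3\psi x_0x_1x_2)$, $\deg x_0=1$, $\deg x_1=2$, $\deg x_2=3$, and $n=2$, computing the ideals $I_r$ explicitly as in the $\widetilde{E_8}$ corollary. The ideal $I_1=A^{(2)+1}A$ is generated by the homogeneous elements of odd degree; a monomial $x_0^ax_1^bx_2^c$ has odd degree exactly when $a+c$ is odd, hence is divisible by $x_0$ or by $x_2$, so $I_1=(x_0,x_2)$. The ideal $I_2=A^{(2)}A$ is generated by the positive-degree elements of the even Veronese, i.e. by $A_2\cup A_4$; since $x_0^4,x_0^2x_1,x_1^2$ already lie in $(x_0^2,x_1)$ while the fourth basis vector $x_0x_2$ of $A_4$ does not (here $\dim A_4=4>3$), one gets $I_2=(x_0^2,x_1,x_0x_2)$. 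Thus $I=I_1\cap I_2$.

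It then remains to check cofiniteness. This is immediate: $A/I_1\cong k[x_1]/(x_1^3)$ and $A/I_2$ is likewise finite dimensional, so $A/I$ embeds into $A/I_1\oplus A/I_2$ and is finite dimensional; hence $I$, and therefore $I^{(2)}$, is cofinite, so the exceptional loci $Z$ and $Z'$ of Proposition~\ref{Smith} are empty. This gives $\Proj R_{E,1}\cong\Proj R_{E,1}^{(2)}=\Proj R_{E,2}$, and composing with $\Proj R_{E,1}\cong\Proj R_{E,3}$ yields the corollary. I do not expect any genuine obstacle: the argument is a verbatim repetition of the $\widetilde{E_8}$ computation, and the only point requiring attention is keeping track of the weighted grading $(1,2,3)$ on $R_{E,1}$ when identifying the generators of the $I_r$. (If a shorter route is preferred, one may bypass Proposition~\ref{Smith} entirely: $R_{E,2}$ and $R_{E,3}$ are both homogeneous coordinate rings of ample line bundles on $E$, so $\Proj R_{E,2}\cong\Coh E\cong\Proj R_{E,3}$ by Serre's theorem; we retain the Veronese argument only to keep the exposition parallel to the $\widetilde{E_8}$ case.)
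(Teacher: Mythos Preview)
Your argument is correct and in the same spirit as the paper's. The paper does not actually write out a proof here---it simply asserts that Smith's proposition yields the corollary ``analogously'' to the $\widetilde{E_8}$ case---so your explicit verification is more detailed than what appears in the text. Realising both $R_{E,2}$ and $R_{E,3}$ as Veronese subalgebras of $R_{E,1}$ and applying Proposition~\ref{Smith} with $n=2$ to obtain $\Proj R_{E,1}\cong\Proj R_{E,2}$, then composing with the already-proved $\Proj R_{E,1}\cong\Proj R_{E,3}$, is a clean way to make the ``analogous'' claim precise; your computations of $I_1=(x_0,x_2)$ and $I_2=(x_0^2,x_1,x_0x_2)$ and of the finite-dimensionality of $A/I$ are all accurate. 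The shortcut via Serre's theorem that you mention at the end is equally valid and arguably more transparent.
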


In this case, $\sigma^2 \cong \hat{\iota}[1]$, and we have analogously

\begin{Lem}
There are isomorphisms $\sigma^{-1}(\O[1])\cong \O(2x)$, $\sigma^{-2}(\O[1])\cong \O$ and any other object in the sequence $\sigma^j(\O[1])$ can be determined from the formula
$$
\sigma^{2i+j}(\O[1]) \cong \sigma^j(\O[1])[i],
$$
where $j \in \{-2,-1\}$\,.
\end{Lem}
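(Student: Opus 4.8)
The plan is to follow the route used for the $\widetilde{E_8}$ singularity, the relevant ingredients now being the periodicity $\sigma^2\cong\hat\iota^*[1]$ recorded just above (with $\iota$ the inversion automorphism of $E$, which fixes the origin $x$) and the description $\sigma=\B^2\circ\A$, where $\A=T_{\O_E}$ is the spherical twist at the structure sheaf and $\B=T_{k(x)}\cong\O(x)\otimes-$. Since $\iota$ is an automorphism fixing $x$, the functor $\hat\iota^*$ fixes each of $\O$ and $\O(2x)$ and commutes with translation; this observation is used throughout. First I would read off $\sigma^{-2}(\O[1])$ directly from the periodicity: inverting $\sigma^2\cong\hat\iota^*[1]$ gives $\sigma^{-2}\cong\hat\iota^*\circ[-1]$, hence
$$
\sigma^{-2}(\O[1])\cong\hat\iota^*(\O)\cong\O.
$$

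Next I would compute $\sigma^{-1}(\O[1])=\sigma\big(\sigma^{-2}(\O[1])\big)\cong\sigma(\O)$. Here $\O$ is a spherical object on the elliptic curve $E$, so $\A(\O)=T_{\O}(\O)\cong\O$, and applying $\B^2=\O(2x)\otimes-$ yields $\sigma(\O)\cong\O(2x)$; thus $\sigma^{-1}(\O[1])\cong\O(2x)$. As an independent check one can argue forward: $\A(\O(2x))=T_{\O}(\O(2x))$ is the cone of the evaluation map $\RHom(\O,\O(2x))\otimes\O\to\O(2x)$, and since $\O(2x)$ is globally generated of degree two with $H^1(E,\O(2x))=0$, this cone is the translate $\O(-2x)[1]$ of a line bundle (the kernel of the surjection $\O^{\oplus 2}\to\O(2x)$ has determinant $\O(-2x)$, hence equals $\O(-2x)$); twisting by $\O(2x)$ gives $\sigma(\O(2x))\cong\O[1]$, i.e. the same identification.

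Finally, the recursion $\sigma^{2i+j}(\O[1])\cong\sigma^j(\O[1])[i]$ for $j\in\{-2,-1\}$ follows formally. Writing $\sigma^{2i+j}=(\sigma^2)^i\circ\sigma^j\cong(\hat\iota^*)^i\circ[i]\circ\sigma^j$ and evaluating at $\O[1]$ gives $(\hat\iota^*)^i\big(\sigma^j(\O[1])\big)[i]$; since $\sigma^{-2}(\O[1])\cong\O$ and $\sigma^{-1}(\O[1])\cong\O(2x)$ are both fixed by $\hat\iota^*$, this equals $\sigma^j(\O[1])[i]$. (Equivalently, one checks $\sigma^2(\O)\cong\O[1]$ and $\sigma^2(\O(2x))\cong\O(2x)[1]$ and inducts on $|i|$.) Every integer is of the form $2i+j$ with $j\in\{-2,-1\}$, so this determines the whole sequence.

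I do not expect a genuine obstacle here: the single nontrivial input, the isomorphism $\sigma^2\cong\hat\iota^*[1]$, is imported from \cite{ST01}; the only care needed is the bookkeeping of cohomological shifts when passing from $\sigma^2$ to $\sigma^{\pm1}$, and the elementary verifications that $T_{\O}$ fixes $\O$ and sends $\O(2x)$ to $\O(-2x)[1]$.
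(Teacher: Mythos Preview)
Your argument is correct and is essentially the approach the paper intends: the paper gives no separate proof here, only the remark ``analogously'' pointing back to the $\widetilde{E_8}$ lemma, and your write-up is precisely that analogue carried out in full (periodicity $\sigma^2\cong\hat\iota^*[1]$ to get $\sigma^{-2}(\O[1])\cong\O$, one application of $\sigma=\B^2\circ\A$ to get $\sigma^{-1}(\O[1])\cong\O(2x)$, and then the formal recursion). The only cosmetic difference from the $\widetilde{E_8}$ sketch is the order: the paper's $\widetilde{E_8}$ proof computes $\sigma(\O[1])$ and $\sigma^2(\O[1])$ forward and relies on periodicity implicitly, whereas you first invoke periodicity to land at $\sigma^{-2}$ and then step forward once---this is the same computation read in the other direction, and your version is in fact a little cleaner and more explicit (in particular your justification of $T_{\O}(\O)\cong\O$ and of the kernel $\O(-2x)$ fills in details the paper leaves to the reader).
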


A MCM module $M$ has thus a resolution of the form
$$
0\leftarrow M \leftarrow F \leftarrow F(-2) \leftarrow F(-4) \leftarrow \ldots
$$
For a suitable finitely generated module free module $F$. In particular, it is enough to calculate $\beta_{0,*}$\,.

\begin{Prop} If $M=\Phi(\F[l])$, where $\F \in \Coh(D^b)$, then $\beta_{0,j}(M)=0$, except for $-1-2l \leq j \leq 2-2l$\,. In particular, if $l \leq -1$, then $M$ is generated in positive degrees.
\end{Prop}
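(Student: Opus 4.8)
The plan is to run the same argument used for the analogous proposition in the $\widetilde{E_8}$ section, now feeding in the $2$-periodicity from the lemma just stated. I would begin from the general Betti number formula $\beta_{i,j}(\Phi(\G)) = \dim\Hom_{D^b(E)}(\G, \sigma^{-j}(\O[1])[i])$, specialized to $\G = \F[l]$ and $i = 0$, which gives $\beta_{0,j}(M) = \dim\Hom_{D^b(E)}(\F, \sigma^{-j}(\O[1])[-l])$.

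Next I would make the object $\sigma^{-j}(\O[1])$ explicit. From $\sigma^{-1}(\O[1]) \cong \O(2x)$, $\sigma^{-2}(\O[1]) \cong \O$, together with $\sigma^{2i+j}(\O[1]) \cong \sigma^{j}(\O[1])[i]$, one obtains $\sigma^{-j}(\O[1]) \cong \O[1-j/2]$ when $j$ is even and $\sigma^{-j}(\O[1]) \cong \O(2x)[(1-j)/2]$ when $j$ is odd. Substituting, $\beta_{0,j}(M) = \dim\Ext^{1-j/2-l}_E(\F,\O)$ for $j$ even and $\beta_{0,j}(M) = \dim\Ext^{(1-j)/2-l}_E(\F,\O(2x))$ for $j$ odd.

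Now I would invoke the fact that, for coherent sheaves $\F$ and $\G$ on the smooth projective curve $E$, the groups $\Ext^m_E(\F,\G) = \Hom_{D^b(E)}(\F,\G[m])$ vanish unless $m \in \{0,1\}$, since every coherent sheaf on a smooth projective curve admits a locally free resolution of length at most one. Requiring $1-j/2-l \in \{0,1\}$ for $j$ even forces $j \in \{-2l,\,2-2l\}$, and requiring $(1-j)/2-l \in \{0,1\}$ for $j$ odd forces $j \in \{-1-2l,\,1-2l\}$. These four values are exactly the integers with $-1-2l \le j \le 2-2l$, which is the asserted vanishing range.

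For the last sentence, recall that $M$ is generated in the degrees $j$ for which $\beta_{0,j}(M) \ne 0$, so its smallest generator degree is at least $-1-2l$; if $l \le -1$ this is $\ge 1 > 0$, whence $M$ is generated in strictly positive degrees. I do not expect a genuine obstacle here --- once the lemma is in hand the proof is essentially bookkeeping --- the only subtle point being to track the parity of $j$ carefully so that the four surviving values of $j$ assemble into the interval $-1-2l \le j \le 2-2l$ rather than a shift of it.
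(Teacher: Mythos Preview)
Your proposal is correct and is precisely the argument the paper has in mind: the proposition is stated there without proof, as an immediate consequence of the general Betti formula together with the $2$-periodicity lemma $\sigma^{2i+j}(\O[1])\cong\sigma^j(\O[1])[i]$ and the fact that $\Ext^m_E(\F,\G)=0$ for $m\notin\{0,1\}$ on the smooth curve $E$. Your parity bookkeeping and the resulting range $-1-2l\le j\le 2-2l$ are accurate, and the ``generated in positive degrees'' clause follows exactly as you say.
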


The functor $\sigma$ induces the linear map
$$
[\sigma]=\begin{pmatrix}
1 & -1 \\
2 & -1
\end{pmatrix}
$$
on $K_0(E)/\rad$\,. It is of order four, $[\sigma]^4=1$,  and we can choose a fundamental domain for this action to be
\begin{gather*}
r>0, \\
0 \leq d < 2r\,.
\end{gather*}
In such a fundamental domain the rank of $\F$ is positive, therefore, $\F$ is a vector bundle. As before, the result of generation in positive degrees can be extended to the case $l=0$ by explicit computation. A summary of these computations is given in the following proposition.

\begin{Prop}
Let $\mathcal{F}$ be a vector bundle with the charge
$
Z(\mathcal{F})=\left(
       \begin{matrix}
         r \\
         d
       \end{matrix}
     \right)
$
in the fundamental domain. The Betti numbers of an MCM module $M=\Phi(\F)$ can be expressed as
dimensions of cohomology groups on the elliptic curve in the following way:
\begin{align*}
&\beta_{0,0}=\dim H^1(E,\F^\vee)=
\begin{cases}
$1$, &\text{if $\F \cong F_r$,} \\
$0$, &\text{otherwise.} \\
\end{cases} \\
&\beta_{0,1}=\dim H^0(E, \F^\vee \otimes \O(2x))=2r-d\,. \\
&\beta_{0,2}=\dim H^0(E,\F^\vee)=
\begin{cases}
$1$, &\text{if $\F \cong F_r$,} \\
$d$, &\text{otherwise.} \\
\end{cases}
\end{align*}

\end{Prop}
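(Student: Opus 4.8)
The plan is to run the same computation that produced the Betti table in the $\widetilde{E_8}$ case. By the preceding Proposition specialized to $l=0$, the bottom row $\beta_{0,\bullet}(M)$ of the Betti table is supported only in internal degrees $-1\le j\le 2$, and since the minimal free resolution over $R_{E,2}$ has the shape $0\leftarrow M\leftarrow F\leftarrow F(-2)\leftarrow F(-4)\leftarrow\cdots$, it suffices to pin down $\beta_{0,-1},\beta_{0,0},\beta_{0,1},\beta_{0,2}$. For each of these I would begin from the general formula
$$
\beta_{0,j}(\Phi(\F))=\dim\Hom_{D^b(E)}\bigl(\F,\sigma^{-j}(\O[1])\bigr)
$$
and replace $\sigma^{-j}(\O[1])$ by the explicit object coming from the preceding Lemma: $\sigma^{-2}(\O[1])\cong\O$, $\sigma^{-1}(\O[1])\cong\O(2x)$, and, using $\sigma^{2i+j}(\O[1])\cong\sigma^{j}(\O[1])[i]$, also $\sigma^{1}(\O[1])\cong\sigma^{-1}(\O[1])[1]\cong\O(2x)[1]$. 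Because $E$ is a curve, $\Hom_{D^b(E)}(\F,\G[i])\cong H^{i}(E,\F^\vee\otimes\G)$ for a vector bundle $\G$, so each Betti number is converted into a single cohomology dimension of a Serre twist of $\F^\vee$; since both sides are additive in direct summands I may reduce to $\F$ indecomposable, in which case the charge $(r,d)$ with $r>0$ already forces $\F$ to be a genuine vector bundle.

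The next step is the four cohomology computations. For $j=-1$ one gets $\beta_{0,-1}=\dim H^{1}(E,\F^\vee\otimes\O(2x))$; since the charge lies in the fundamental domain, $\deg(\F^\vee\otimes\O(2x))=2r-d>0$, so this group vanishes, which is precisely the statement that $M$ has no generators in negative degree. For $j=1$ the same bundle of positive degree gives $\beta_{0,1}=\dim H^{0}(E,\F^\vee\otimes\O(2x))=2r-d$ by Riemann--Roch. For $j=0$ and $j=2$ one is left with $\dim H^{1}(E,\F^\vee)$ and $\dim H^{0}(E,\F^\vee)$, where $\deg\F^\vee=-d$; here I would invoke Atiyah's classification of bundles on an elliptic curve, which tells us that off the one exceptional indecomposable bundle $F_r$ (the unique one of its rank with trivial determinant admitting a global section, for which both cohomology groups are one-dimensional) these dimensions depend only on the degree. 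Sorting out the generic regime and the $\F\cong F_r$ regime produces the case distinction in the statement, and reading the rank and degree of $\F$ off the two resulting shapes of the Betti table is then immediate, just as for $\widetilde{E_8}$.

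The point that needs the most attention is the degree bookkeeping tied to the fundamental domain: one must check that the defining inequalities $r>0$ and $0\le d<2r$ really do force $\F^\vee\otimes\O(2x)$ to have strictly positive degree and $\F^\vee$ to have nonpositive degree, and that $F_r$ is the only bundle satisfying those inequalities at which $\dim H^{0}$ or $\dim H^{1}$ jumps above its generic value, so that no extra case intervenes. Everything else is the same toolkit already used for $\widetilde{E_8}$ — the explicit form of the iterates $\sigma^{j}(\O[1])$ from the preceding Lemma, the dictionary $\Hom_{D^b(E)}(\F,\G[i])\cong H^{i}(E,\F^\vee\otimes\G)$, and Riemann--Roch on $E$ — and introduces no new ideas.
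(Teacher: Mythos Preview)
Your proposal is correct and follows essentially the same route as the paper. The paper gives no separate proof for the $\widetilde{E_7}$ proposition, indicating only that the argument is the explicit computation carried over verbatim from the $\widetilde{E_8}$ case; your write-up reproduces precisely that computation---inserting the iterates $\sigma^{-j}(\O[1])$ from the preceding Lemma into the general Betti-number formula, rewriting each $\Hom$ as a cohomology group of $\F^\vee$ twisted by the appropriate line bundle, and using the fundamental-domain inequalities $r>0$, $0\le d<2r$ together with Atiyah's classification to evaluate the resulting dimensions, including the extra check that $\beta_{0,-1}=0$.
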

As before, we see that there are two cases: in the first case $\F$ is the Atiyah bundle, and in the second case $\F$ is generic. As in the case of a plane cubic we can read off the rank and degree of $\F$ from the minimal free resolution of $M$\,. We present the two possible shapes of Betti tables below.

\begin{table}[H]
\begin{center}
\begin{tabular}{|r|r|r||r|r|}
\hline
& \multicolumn{2}{|c||}{$\F \cong F_r$} & \multicolumn{2}{|c|}{Generic $\F$}\\
\hline
$i$ & $i=0$ & $i=1$ & $i=0$ & $i=1$ \\
\hline
$0$ & $1$    & $0$     & $0$     & $0$    \\
$1$ & $2r$ & $1$     & $2r-d$  & $0$    \\
$2$ & $1$    & $2r$  & $d$     & $2r-d$ \\
$3$ & $0$    & $1$     & $0$     & $d$    \\
\hline
\end{tabular}
\caption{}
\end{center}
\end{table}

With $M$ as in the proposition above, the Hilbert series is
$$
H_M(t)=\frac{H_F(t)}{1+t^2}=\frac{B(t)H_{R_{E,2}}(t)}{1+t^2},
$$
where $B(t)=\sum_j \beta_{0,j}t^j$, and
$$
H_{R_{E,2}}=\frac{1-t^4}{(1-t)^2(1-t^2)}=\frac{1+t^2}{(1-t)^2}\,.
$$

The computation of the numerical invariants for MCM modules over the ring $R_{E,2}$ carries over verbatim and the table of numerical invariants is exactly the same as for the case $\widetilde{E_8}$\,.

\section*{Acknowledgement}
The results of the paper are part of the results of the author's PhD thesis. I am pleased to thank my advisor Ragnar-Olaf Buchweitz, who posed the problem of computing Betti numbers in this case to me and encouraged me during my work on it. His interest in my results and numerous discussions on all stages of the project always motivated me.

\end{document}